\documentclass[11pt,reqno]{amsart}
\usepackage{amscd,amssymb,amsmath,amsthm}
\usepackage{mathtools}
\usepackage[english]{babel}
\usepackage[T1]{fontenc}
\usepackage[arrow,matrix]{xy}
\usepackage{graphicx,tikz,tikz-3dplot}
\usepackage{caption}
\usepackage{subcaption}

\usetikzlibrary{positioning,matrix,arrows,calc}
\usepackage{dsfont}
\usepackage{enumitem}
\usepackage{bbm}
\usepackage{hyperref}

\hypersetup{
	colorlinks=true,
	linkcolor=blue}
\usepackage{array}
\usepackage{xargs}                      
\usepackage[colorinlistoftodos,prependcaption,textsize=tiny]{todonotes}

\usepackage[
   backend=biber,        
   sorting=nyt,          
   citestyle=alphabetic, 
   bibstyle=alphabetic,  
   isbn=false,
   doi=false,
   url=false,
   maxalphanames=99,
   maxnames=99,
   giveninits=true,
   useprefix=true, 
]{biblatex}
\theoremstyle{plain}
\newtheorem{theorem}{Theorem}[section]
\newtheorem{lemma}[theorem]{Lemma}
\newtheorem{proposition}[theorem]{Proposition}

\theoremstyle{definition}
\newtheorem{remark}[theorem]{Remark}
\newtheorem{definition}[theorem]{Definition}

\numberwithin{equation}{section} 

\newcommand{\N}{\mathbb{N}}
\newcommand{\R}{\mathbb{R}}

\newcounter{hypocounter}
\renewcommand\thehypocounter{(H\arabic{hypocounter})} 
\newcommand{\Ent}{\operatorname{Ent}}
\renewcommand{\Cap}{\operatorname{Cap}_\alpha}

\newcommand{\Gibbs}{\mu^{\eta}_{N,\alpha, \lambda, \beta}} 
\newcommand{\Gib}{\mu^{\eta}_{N}} 
\newcommand{\EE}{E^\eta_{N,\alpha, \lambda, \beta}} 
\newcommand{\E}{E^\eta_{N}} 
\newcommand{\bfP}{\mathbf{P}}

\newcommand{\bbE}{\mathbb{E}}
\newcommand{\bbZ}{\mathbb{Z}}

\title[Disordered long-range Discrete Gaussian chain]{Delocalisation and scaling limit for the disordered long-range Discrete Gaussian Chain}

\author[C. Chalhoub]{Christopher Chalhoub}
\address[C. Chalhoub]{Department of Mathematics, Imperial College London, London, United Kingdom}
\email{christopher.chalhoub21@imperial.ac.uk}

\author[P. Dario]{Paul Dario}
\address[P. Dario]{Laboratoire Analyse Géométrie Modélisation (AGM), CY Cergy Paris Universit\'e, Cergy-Pontoise, France}
\email{paul.dario@cyu.fr}

\author[C. Faipeur]{Corentin Faipeur}
\address[C. Faipeur]{Laboratoire Analyse Géométrie Modélisation (AGM), CY Cergy Paris Universit\'e, Cergy-Pontoise, France}
\email{corentin.faipeur@cyu.fr}

\author[A. Le Ny]{Arnaud Le Ny}
\address[A. Le Ny]{Laboratoire d'Analyse et de Mathématiques Appliquées, LAMA UMR CNRS 8050, UPEC, 61 avenue du Général de Gaulle, 94010 Créteil Cedex, France}
\email{arnaud.le-ny@u-pec.fr}

\begin{document}

\begin{abstract}
In this paper, we study the effect of an external random field on the localisation / delocalisation of the long-range Discrete Gaussian Chain (DGC), recently studied by Garban~\cite{garban2023invisibility} and Coquille \emph{et al.}~\cite{coquille2024quantitative}. We prove that the model is delocalised at every inverse temperature for any polynomial decay $\alpha > 3/2$. This behaviour differs qualitatively from the one of the disorder-free long-range DGC, for which delocalisation at every temperature occurs only for $\alpha > 2$, similarly to what happens for long-range Ising models as already proved by Aizenman and Wehr \cite{AW1989}. Additionally, we provide a more detailed description of the delocalised phase by identifying the growth exponent and scaling limit of the height at the origin for any decay exponent $\alpha > 3/2$.

\end{abstract}
\maketitle
\section{Introduction}

\subsection{Models and motivations}
\noindent

\medskip

\emph{Discrete Gaussian models}, also known as integer-valued Gaussian Free Fields, are models of random interfaces introduced in order to simplify the study interfaces arising in phase separation; see, for example, Minlos and Sinai \cite{Minlos-Sinai1967a}, Gallavotti \cite{Gallavotti1972}, Abraham and Reed \cite{Abraham1976}, or Velenik \cite{Ve06} for the genesis of this question for nearest neighbours (n.n.) Ising models. Obtaining a rigorous mathematical description of this phase separation
and understanding the properties of the corresponding ``interfaces'' between phases turned out to be challenging problems, which motivated many developments in equilibrium statistical mechanics, and eventually led to the introduction of new ``effective'' interface models, such as the Solid-on-Solid (SoS) or the Discrete Gaussian models, see e.g. \cite{Chui-Weeks, KH82, BLP82}.\\

A rigorous probabilistic framework for studying these systems was established through the DLR formalism of Dobrushin \cite{Dobrushin} and Lanford and Ruelle \cite{LanfordRuelle}. In this approach, microscopic states are described by spin configurations $\sigma=(\sigma_i)_{i\in\mathbb Z^d}$, and macroscopic equilibrium states are represented by probability measures specified by a family of conditional distributions with prescribed boundary conditions. In finite volume, these measures are given by the Boltzmann--Gibbs (BG) distribution
\begin{equation*}
\mu(\sigma)\propto e^{-\beta H(\sigma)},
\end{equation*}
where $\beta$ denotes the inverse temperature and $H$ is the Hamiltonian of the system. Infinite-volume Gibbs measures then arise as thermodynamic limits of these finite-volume distributions or linear combinations of them.
In contrast to the classical Kolmogorov framework for probability measures on infinite volume product (Polish) spaces, DLR formalism allows the coexistence of several Gibbs measures in infinite volume, and thus provides a rigorous mathematical description of phase transitions.\\

A central goal in mathematical statistical mechanics is then to understand the set of Gibbs measures. In this direction, a fundamental result was the discovery of Dobrushin of the existence of (infinitely many) non-translation-invariant extremal Gibbs measures (called {\em Dobrushin states}) for the low temperature Ising model in dimensions $d\ge3$ \cite{Dobrushin_BC}. These measures arise for instance as thermodynamic limits of finite-volume Ising models with so called  $\pm$ Dobrushin boundary conditions and are associated with \emph{rigid} (or localised) interfaces separating the two pure phases $\mu^+$ and $\mu^-$ (N.B. These states are obtained as the thermodynamic limit of finite-volume Ising models with plus and minus boundary conditions). In contrast, the two-dimensional Ising model exhibits \emph{rough} (or delocalised) interfaces at low temperature. Aizenman~\cite{aizenman1980translation} and Higuchi~\cite{higuchi1979absence} obtained a complete description of its Gibbs measures by showing that every such measure is a convex combination of the two pure phases $\mu^+$ and $\mu^-$. In particular, this prevents the existence of Dobrushin states for $d=2$, and the interface between the plus and minus phases fluctuates under $\pm$ Dobrushin boundary condition. Quantifying this fluctuation has motivated the introduction of effective interface models of height functions in dimension $d-1$, such as the discrete Gaussian model.

\medskip
In this article, we study a one-dimensional long-range discrete Gaussian model of height functions $\phi \in \bbZ^\bbZ$ with a random external field, whose Hamiltonian is formally given by
\begin{equation*}
H(\phi)=\sum_{x\neq y\in\mathbb Z}\dfrac{\bigl(\phi(x)-\phi(y)\bigr)^2}{|x-y|^{\alpha}} + \sum_{x \in \mathbb{Z}} \eta_x \phi(x),
\end{equation*}
where $(\eta_x)_{x \in \mathbb{Z}}$ are i.i.d. centered random variables and the parameter $\alpha$ controls the decay of the interaction. Varying $\alpha$ changes the large-scale behaviour of the model and provides a way to interpolate between different effective dimensions. The Imry--Ma phenomenon~\cite{IM75} (rigorously established by Aizenman and Wehr~\cite{AW1990}) states that the incorporation of a quenched disorder in the form of a random external field can substantially alter the properties of spin systems in low dimensions. Our goal is to investigate how long-range interactions and quenched disorder jointly affect the behaviour of the model, in particular regarding the localisation/delocalisation properties and its scaling limit in the delocalised regime.\\

For general models of random interface, the notions of localisation and delocalisation are usually expressed in equivalent (or closely related) ways. On the one hand, localisation and delocalisation are expressed within the DLR framework through the existence, respectively the absence, of infinite-volume Gibbs measures.
On the other hand, they are often characterised by the statement that some $\mathbb{L}^p$-norm of the height at the origin $\phi(0)$ is bounded or unbounded.

In the presence of quenched disorder, these different perspectives become more delicate from a measure-theoretic point of view, and we refer to the works of Bovier and Külske~\cite{bovier1992stability, bovier1993hierarchical, BK94, BK96} in the 1990s and by Cotar and Külske~\cite{CK12, CK15} for a detailed investigation of the existence, uniqueness, and structure of random (gradient) Gibbs measures in this setting.
We also refer to the monographs of Sheffield \cite{Sheffield} and Velenik \cite{Ve06} for additional information on random interfaces (without disorder).\\

In the present work, we focus on two complementary notions of delocalisation of the interface for decay exponents $\alpha>3/2$. The first result, called {\em qualitative delocalisation}, establishes the absence of \emph{shift-covariant} random Gibbs measures (see Theorem~\ref{thm:qualitative-delocalisation}), while the second one, called {\em quantitative delocalisation}, provides the asymptotic growth of the variance of the height at a the site $0 \in \mathbb{Z}$ (in finite-volume systems with Dirichlet boundary condition, see Theorem~\ref{thm:deloc}).
For smaller values of $\alpha$, one expects localisation to hold (by comparison with the real-valued disordered Gaussian chain, see Table~\ref{table1.1}). Establishing this result is beyond the scope of this article and would likely require a more precise analysis of the infinite-volume Gibbs measures within the DLR framework; see Cotar and Külske~\cite{CK12, CK15} for related results in disordered interface models.

\subsection{Framework and results}
Our framework is thus the one of unbounded one-dimensional spin systems with polynomially decaying interaction.

\subsubsection{Height functions} We consider random (discrete) height functions $\phi=(\phi(x))_{x \in \bbZ}$ on the one-dimensional lattice $\mathbb Z$. For $N\in \mathbb N$, we denote by
\[
    \Lambda_N := \{-N,\ldots,N\} \subseteq \mathbb Z,
\]
and consider the height functions $\phi$ in finite volume $\Lambda_N$ with Dirichlet boundary condition to be the  
functions $\phi:\mathbb Z\to \mathbb Z$ such that $\phi(x)=0$ for all
$x\notin \Lambda_N$. We introduce the corresponding finite-volume configuration space 
\[
    \Omega_N := \{\phi:\mathbb Z\to\mathbb Z \mid \phi(x)=0 \text{ for all }
    x\notin \Lambda_N\}
    \simeq \mathbb Z^{\Lambda_N}.
\]
\subsubsection{Long-range Hamiltonian and random field} Let $\alpha>1$ be a decay exponent. For $\phi\in\Omega_N$, define the zero-boundary long-range quadratic Hamiltonian by
\begin{equation*}
    \mathcal H_{N,\alpha}^0(\phi):=
    \frac12 \sum_{\substack{\{x,y\}\subset\mathbb Z,\\ x\ne y}} \dfrac{(\phi(x)-\phi(y))^2}{|x-y|^{\alpha}},
\end{equation*}
Since $\phi$ vanishes outside $\Lambda_N$, the sum is finite. We similarly define the zero-boundary nearest-neighbour quadratic Hamiltonian by
\begin{equation*}
    \mathcal H_{N,\infty}^0(\phi):=
    \frac12 \sum_{\substack{\{x,y\}\subset\mathbb Z, \\ |x- y| = 1}} (\phi(x)-\phi(y))^2.
\end{equation*}
To introduce disorder, let $(\Xi, \Sigma, \mathbb P)$ be a probability space supporting an i.i.d. collection of random variables $\eta=(\eta_x)_{x\in\mathbb Z}$ with finite second moment such that
\begin{equation*}
    \bbE \left[ \eta_x \right] = 0 ~~\mbox{and}~~ \bbE \left[ \eta_x^2 \right] = 1,
\end{equation*}
where $\mathbb E$ denotes the expectation with respect to the
disorder law $\mathbb P$.

\subsubsection{Disordered Gibbs measures}
 We then let $\lambda > 0$ be a disorder strength, $\beta>0$ be an inverse temperature, and, for any fixed realisation of the disorder $\eta$ and any decay exponent $\alpha \in (1 , \infty]$, we define the finite-volume disordered Hamiltonian by
\begin{equation} \label{eq:defdisorderedHamiltonian}
   \forall \phi\in\Omega_N, \hspace{3mm}  \mathcal H_{N, \alpha,\lambda}^\eta(\phi):=
    \mathcal H_{N,\alpha}^0(\phi)-
    \lambda \sum_{x\in\Lambda_N} \eta_x \phi(x).
\end{equation}
Equipped with these definitions, we now introduce the central object of study of this article: the disordered Gibbs measure.

\begin{definition}[Disordered Gibbs measure]
For any decay exponent $\alpha \in (1, \infty]$, any disorder strength $\lambda > 0$, any inverse temperature $\beta>0$ and any realisation of the disorder $\eta$, we define the disordered Gibbs measure by the identity
\begin{equation} \label{eq:defGibbsmeasure}
    \forall \phi\in\Omega_N, \hspace{3mm} \Gibbs(\phi):=
    \frac{1}{Z_{N, \alpha, \lambda, \beta}^{\eta}}
    \exp\bigl(-\beta \mathcal H_{N, \alpha, \lambda}^\eta(\phi)\bigr),
\end{equation}
where the partition function is
\[
    Z_{N, \alpha, \lambda, \beta}^{\eta}:=\sum_{\phi\in\Omega_N}
    \exp\bigl(-\beta \mathcal H_{N, \alpha, \lambda}^\eta(\phi)\bigr).
\]
We denote by $\EE$ the expectation with respect to $\Gibbs.$
\end{definition}

\begin{remark}
 For a realisation of the disorder $\eta$, we call any configuration that minimises the Hamiltonian~\eqref{eq:defdisorderedHamiltonian} a \textit{ground state for the disorder $\eta$}. Due to the integer-valued constraint, the ground state is not necessarily unique for a fixed realisation of the disorder. However, the set of ground states is always finite and uniqueness holds for Lebesgue-almost every disorder realisation $\eta \in \mathbb{R}^{\Lambda_N}$. The ground states are related to the Gibbs measure~\eqref{eq:defGibbsmeasure} as follows: for any realisation of the disorder $\eta$, the measure $\Gibbs$ converges as $\beta \to \infty$ to the uniform measure on the ground states for $\eta$. We denote this measure by $\mu^{\eta}_{N,\alpha, \lambda, \infty}$. 
\end{remark}

\subsubsection{Results}
We first give a qualitative form of delocalisation, formulated as the non-existence of shift-covariant infinite-volume random Gibbs measures with finite annealed first moment when $\alpha>3/2$ (see Definition~\ref{def:def3.1}). We will make use of the notation
\begin{equation} \label{def:defOmega}
    \Omega_\alpha := \left\{ \phi : \bbZ \to \bbZ \mid \sum_{y \in \bbZ \setminus \{0 \}} |y|^{-\alpha} \left| \phi(y) \right| < \infty \right\}.
\end{equation}

\begin{theorem}[Qualitative delocalisation]
\label{thm:qualitative-delocalisation}
Let \(\alpha \in (3/2,\infty]\), \(\lambda>0\), and \(\beta>0\). There does not exist a measurable map
\[
    \eta \longmapsto \mu^\eta \in \mathcal P(\Omega_\alpha)
\]
such that the following two properties hold:
\begin{enumerate}
    \item $\eta \mapsto \mu^\eta$ is a shift-covariant infinite-volume Gibbs measure for the disordered long-range Gaussian chain with parameters \((\alpha,\lambda,\beta)\) (see Definition~\ref{def:def3.1});
    \item it has finite annealed first moment:
    \[
        \mathbb{E}\bigl[\mu^\eta[|\phi(0)|]\bigr]<\infty .
    \]
\end{enumerate}
\end{theorem}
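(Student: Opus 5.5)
We argue by contradiction, following the Aizenman--Wehr strategy adapted to height functions (as in the random-field delocalisation arguments for real- or integer-valued interfaces). Suppose $\eta\mapsto\mu^\eta$ is a shift-covariant Gibbs measure with $\mathbb E[\mu^\eta[|\phi(0)|]]<\infty$. The key observable is the quenched response of the mean height to a change of the disorder in a box. For $L\in\N$, we write $\eta^{(L,s)}$ for the disorder obtained by replacing $\eta_x$ by $\eta_x+s$ for $x\in\Lambda_L$. We study
\[
G_L(\eta):=\sum_{x\in\Lambda_L}\bigl(\mu^{\eta}[\phi(x)]-\mu^{\eta^{(L,-\bar\eta_L)}}[\phi(x)]\bigr),
\]
or, more conveniently, the derivative in $s$ of the ``free energy'' $F_L(\eta)$. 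We define $F_L(\eta)$ as the logarithm of the Radon--Nikodym-type ratio obtained by tilting $\mu^\eta$ by $\exp(\beta\lambda s\sum_{x\in\Lambda_L}\phi(x))$. By the DLR property, this tilt is exactly the Gibbs measure for the shifted disorder, provided the shifted field is again a Gibbs measure. Shift-covariance together with the finite first moment lets us make sense of this in infinite volume: the tilt is integrable on $\Omega_\alpha$ after conditioning outside $\Lambda_{2L}$, with a boundary-error term.

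\textbf{Upper bound.} Using the finite annealed first moment and translation covariance, $\mathbb E\bigl[\sum_{x\in\Lambda_L}\mu^\eta[|\phi(x)|]\bigr]\le CL$. The variation of $F_L$ under a unit change of $\sum_{\Lambda_L}\eta_x$ is then controlled by $\lambda\beta\sum_{\Lambda_L}|\phi(x)|$ plus the interaction cost of the boundary. The long-range interaction energy across $\partial\Lambda_L$ is
\[
\sum_{x\in\Lambda_L,\,y\notin\Lambda_L}\frac{|\phi(x)-\phi(y)|}{|x-y|^\alpha}\cdot(\ldots).
\]
The idea is that shifting the field by an integer constant on $\Lambda_L$ costs, in expectation, at most of order $\sum_{x\in\Lambda_L,y\notin\Lambda_L}|x-y|^{-\alpha}\asymp L^{2-\alpha}$ for $\alpha<2$ ($\log L$ for $\alpha=2$, $O(1)$ for $\alpha>2$). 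With the first-moment assumption, this gives a quenched bound on the fluctuation of the finite-volume free-energy difference $\Delta_L(\eta)$ between the states with the integer shift $\phi\mapsto\phi+1$ on $\Lambda_L$ and without it. On one side, $|\Delta_L|\le C L^{\max(2-\alpha,0)}$ (up to logs), in $L^1(\mathbb P)$ and uniformly. On the other side, $\Delta_L$ contains the linear term $\beta\lambda\sum_{x\in\Lambda_L}\eta_x$ exactly, because shifting by $+1$ changes the field energy by $-\lambda\sum_{\Lambda_L}\eta_x$.

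\textbf{Lower bound (CLT).} Shift-covariance implies that the distribution of $\Delta_L$ is symmetric up to the covariance structure. More precisely, the Aizenman--Wehr argument uses that $\mathbb E[\Delta_L\mid\mathcal F_L]$, with $\mathcal F_L$ the $\sigma$-field of $(\eta_x)_{x\in\Lambda_L}$, has a derivative in $\eta_x$ bounded below by a positive quantity with the sign of the response. Combined with the CLT for $\sum_{\Lambda_L}\eta_x$, this yields $\mathbb E|\Delta_L|\ge c\sqrt L$. Comparing with the upper bound gives $\sqrt L\lesssim L^{\max(2-\alpha,0)}$, which is contradictory precisely when $2-\alpha<1/2$, i.e.\ $\alpha>3/2$; the case $\alpha=\infty$ is the nearest-neighbour case with $O(1)$ boundary cost. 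Since $\phi$ is integer-valued, we need not use convexity to linearise. We only compare $\phi$ with $\phi+\mathbf 1_{\Lambda_L}$, which is a bijection of $\Omega_\alpha$, so the identity for the field term is exact.

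\textbf{Main obstacle.} The hardest step is making $\Delta_L$ rigorous in infinite volume and proving the lower bound. Here $\Delta_L$ is the log of the $\mu^\eta$-expectation of $\exp(-\beta[\mathcal H(\phi+\mathbf 1_{\Lambda_L})-\mathcal H(\phi)])$, and the boundary term $\sum_{x\in\Lambda_L,y\notin\Lambda_L}(2(\phi(x)-\phi(y))+1)|x-y|^{-\alpha}$ is finite thanks to $\phi\in\Omega_\alpha$. Its control in expectation needs both the first-moment bound and the slow growth of $\mu^\eta[|\phi(x)-\phi(y)|]$, which we bound by $2C$ via stationarity. I would first try to get the lower bound through the Aizenman--Wehr device: by the DLR equation, $\partial_{\eta_x}\Delta_L=\beta\lambda\bigl(1+\text{(difference of means of }\phi(x)\text{ under the two measures)}\bigr)$. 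The resulting monotonicity, together with shift-covariance (which makes the $\mathbb E$ of the mean-difference term vanish), gives a martingale decomposition over $x\in\Lambda_L$ whose increments have variance bounded below. A Gaussian lower bound then follows from the martingale CLT.
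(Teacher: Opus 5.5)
\textbf{There is a genuine gap, and it sits exactly where you place the difficulty.}

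\emph{Your $\Delta_L$ is identically zero.} With your own definition, $\Delta_L=\ln\mu^\eta\bigl[\exp(-\beta[\mathcal H(\phi+\mathbf 1_{\Lambda_L})-\mathcal H(\phi)])\bigr]$, the quantity vanishes for every Gibbs measure. Conditionally on the exterior $\zeta$, the bijection $\xi\mapsto\xi+\mathbf 1_{\Lambda_L}$ (the one you invoke) gives
\[
\gamma^\eta_{\Lambda_L}\bigl[e^{-\beta(\mathcal H^\eta_{\Lambda_L}(\xi+\mathbf 1_{\Lambda_L}\mid\zeta)-\mathcal H^\eta_{\Lambda_L}(\xi\mid\zeta))}\bigm|\zeta\bigr]
=\frac{1}{Z^\eta_{\Lambda_L}(\zeta)}\sum_{\xi}e^{-\beta\mathcal H^\eta_{\Lambda_L}(\xi+\mathbf 1_{\Lambda_L}\mid\zeta)}=1 .
\]
DLR, extended to nonnegative functions by monotone convergence, then gives $\Delta_L=0$. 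Consequences:
\begin{itemize}
\item Your upper bound on $|\Delta_L|$ is vacuous.
\item The lower bound $\mathbb E|\Delta_L|\ge c\sqrt L$ is false.
\item The sketched route to it cannot work: a derivative $\partial_{\eta_x}\Delta_L=\beta\lambda(1+\cdots)$ bounded below does not exist, since the derivative is $0$; moreover $\eta\mapsto\mu^\eta$ is only measurable.
\item The exponential tilt of $\mu^\eta$ by $e^{\beta\lambda s\sum_{\Lambda_L}\phi}$ need not be integrable under a first-moment hypothesis, and need not coincide with the selected measure $\mu^{\eta^{(L,s)}}$.
\end{itemize}

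\emph{The missing step is Jensen's inequality.} Write $B=\Lambda_L$. Since $\mathcal H(\phi+\mathbf 1_B)-\mathcal H(\phi)=-\Phi_B(\phi)+\tfrac12\Cap(B)-\lambda S_B(\eta)$, where $\Phi_B(\phi)=\sum_{x\in B,y\notin B}|x-y|^{-\alpha}(\phi(y)-\phi(x))$, the identity $\Delta_L=0$ reads
\[
\beta\lambda S_B(\eta)=\tfrac\beta2\Cap(B)-\ln\mu^\eta[e^{\beta\Phi_B}].
\]
So $\lambda S_B$ is balanced by a log-exponential moment of the boundary flux, which a first-moment assumption cannot control. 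Apply Jensen to the conditional identity, for both shifts $\pm\mathbf 1_B$, and integrate with DLR. This gives the almost sure two-sided bound
\[
\bigl|\lambda S_B(\eta)+\mu^\eta[\Phi_B]\bigr|\le\tfrac12\Cap(B),
\]
which involves only the mean of $\Phi_B$. Combine it with your stationarity bound $\mathbb E[\mu^\eta|\Phi_B|]\le 2M\Cap(B)$. For intervals and $\alpha>3/2$ this yields
\[
\lambda\,\mathbb E|S_B|\le\bigl(\tfrac12+2M\bigr)\Cap(B)=o\bigl(\sqrt{|B|}\bigr).
\]
The contradiction then follows from the ordinary CLT for the i.i.d. sum $S_B$ with uniform integrability, i.e. $\mathbb E|S_B|/\sqrt{|B|}\to\sqrt{2/\pi}$. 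No Aizenman--Wehr free-energy or martingale lower bound is needed, because the linear term is literally $\lambda S_B$. Likewise no uniform-in-$\eta$ bound on the boundary term is needed (and none is available); the $L^1(\mathbb P)$ bound suffices.
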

\begin{remark}
    The assumption that $\mu^\eta$ is a probability measure on the set $\Omega_\alpha$, rather than on the space of functions from $\mathbb{Z}$ to $\mathbb{Z}$, is a technical requirement that is important for the definition of Gibbs measures (see Definition~\ref{def:3.1} and Remark~\ref{remark3.2} below).
\end{remark}

We then provide a quantitative version of the delocalisation for the disordered height function in the same regime $\alpha > 3/2$.
We identify the asymptotic behaviour of the $\mathbb{L}^2$-norm of the random variable $\phi(0)$ under the annealed distribution $$\mathbb P_{N,\alpha,\lambda, \beta}^{\mathrm{ann}}(d\eta,d\phi):=\mathbb P(d\eta)\,\Gibbs(d\phi)$$ as $N \to \infty$. As a by-product of the proof, we also obtain an upper bound in the case $\alpha \leq 3/2$ (which is not expected to be sharp).
In order to state the result, we need to introduce the Green's function associated with the fractional Laplacian in the interval $[-1 , 1]$ with Dirichlet boundary condition.

\begin{definition}[Green's function for the Laplacian and fractional Laplacian]
We introduce the following functions:
\begin{itemize}
    \item \underline{Green's function:} We let $G: [-1,1] \to \R$ be the solution of the equation
\begin{equation*}
    \left\{ \begin{aligned}
        -\Delta G & = \delta_0 &~\mbox{in}~& [-1,1], \\
        G & = 0 &~\mbox{on}~& \partial [-1,1].
    \end{aligned} \right.
\end{equation*}
    Here, $\Delta$ denotes the classical Laplacian (i.e., the second derivative in dimension $1$).
    \item \underline{Fractional Green's function:} For a decay exponent $\alpha \in (3/2 , 3)$, we let $G_\alpha:\R \to \R$ be the solution of the equation
\begin{equation*}
    \left\{ \begin{aligned}
        (-\Delta)^{\frac{\alpha - 1}{2}} G_{\alpha} & = \delta_0 &~\mbox{in}~& [-1,1] \\
        G_{\alpha} & = 0 &~\mbox{on}~& \R \setminus [-1,1].
    \end{aligned} \right.
\end{equation*}
    Here, for all $s\in(0,1)$, $(-\Delta)^s$ is the fractional Laplacian (see e.g. \cite{LSSW17}).
\end{itemize}
\end{definition}

\begin{remark} \label{rem:remark1.3}
    For the one-dimensional Green's function with Dirichlet boundary condition, we have the explicit formula
    \begin{equation*}
        \forall x \in [-1,1], \hspace{3mm} G(x) = \frac{(1 - |x|)}{2}.
    \end{equation*}
\end{remark}

\begin{theorem}[Delocalisation and asymptotic typical height at the origin]\label{thm:deloc}
 For any decay exponent $\alpha \in (1 , \infty]$, any inverse temperature $\beta > 0$ and any disorder strength $\lambda > 0$, one has the following convergences:
 \begin{itemize}
     \item For $\alpha \in (3/2, 3)$,
     \begin{equation*}
        \frac{1}{N^{2\alpha - 3}} \mathbb E\bigl[\EE[ \phi(0)^2 ]\bigr] \underset{N \to \infty}{\longrightarrow} \lambda^2 \int_{-1}^1 G_{\alpha}(x)^2 \, dx.
     \end{equation*}
     \item For $\alpha = 3$, 
      \begin{equation*}
        \frac{(\ln N)^2}{N^{3}} \mathbb E\bigl[\EE[ \phi(0)^2 ]\bigr] \underset{N \to \infty}{\longrightarrow} \lambda^2 \int_{-1}^1 G(x)^2 \, dx.
     \end{equation*}
     \item For $\alpha \in (3, \infty]$,
      \begin{equation*}
        \frac{1}{N^{3}} \mathbb E\bigl[\EE[ \phi(0)^2 ]\bigr] \underset{N \to \infty}{\longrightarrow} \frac{\lambda^2}{\sigma_{\alpha}^2} \int_{-1}^{1} G(x)^2 \, dx ~~\mbox{with} ~~\sigma_\alpha :=  \sum_{k =1}^\infty \frac{1}{k^{\alpha - 2}}.
     \end{equation*}
 \end{itemize}
 Moreover, for $\alpha\in(1,3/2]$, there exists a constant $C:=C(\alpha,\lambda, \beta) < \infty$ such that
 \begin{equation*}
     \forall N \geq 2, \qquad \bbE\bigl[\EE[ \phi(0)^2 ]\bigr] \leq C (\ln N)^2.
 \end{equation*}
\end{theorem}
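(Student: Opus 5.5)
Throughout, write $L_{N,\alpha}$ for the discrete Dirichlet operator on $\Lambda_N$ with $\langle \phi, L_{N,\alpha}\phi\rangle = 2\mathcal H^0_{N,\alpha}(\phi)$. The plan is to compare the integer-valued disordered field with the real-valued harmonic response to the disorder. Let $u^\eta := \lambda L_{N,\alpha}^{-1}\eta$ be the minimiser over $\R^{\Lambda_N}$ of $\mathcal H^\eta_{N,\alpha,\lambda}$. Completing the square gives
$$\mathcal H^\eta_{N,\alpha,\lambda}(\phi) = \mathcal H^0_{N,\alpha}(\phi - u^\eta) + \mathrm{const}(\eta).$$
So $\Gibbs$ is the \emph{disorder-free} discrete Gaussian chain shifted by the real (non-integer) field $u^\eta$. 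Equivalently, $\psi:=\phi-u^\eta$ is a discrete Gaussian chain on the shifted lattice $\bbZ^{\Lambda_N}-u^\eta$.

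Since $\phi(0)^2 = u^\eta(0)^2 + 2u^\eta(0)\psi(0)+\psi(0)^2$, the theorem reduces to two estimates.
\begin{enumerate}
\item[(a)] The main term. Write $\bbE[u^\eta(0)^2]=\lambda^2\sum_{y\in\Lambda_N} L_{N,\alpha}^{-1}(0,y)^2$, using only $\bbE[\eta_x\eta_y]=\delta_{xy}$. Its asymptotics then follow from the asymptotics of the discrete Green's function.
\begin{itemize}
\item For $\alpha\in(3/2,3)$, $L^{-1}_{N,\alpha}(0,Nx)\approx N^{\alpha-2}G_\alpha(x)$. Summing the squares against the measure $N\,dx$ gives $N^{2\alpha-3}\int G_\alpha^2$.
\item For $\alpha>3$, the long-range operator behaves like $\sigma_\alpha(-\Delta)$. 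Indeed, the Dirichlet energy of a smooth profile is $\sum_k k^{-\alpha}k^2\cdot\frac12\cdot 2\int|\nabla f|^2$, which produces the constant $\sigma_\alpha$. Hence $L^{-1}(0,Nx)\approx N G(x)/\sigma_\alpha$, giving $N^3\int G^2/\sigma_\alpha^2$.
\item For $\alpha=3$, $\sigma_3$ is replaced by the divergent sum $\sum_{k\le N}k^{-1}\sim\ln N$, which gives the $(\ln N)^{-2}$ factor.
\item For $\alpha\le 3/2$, $L^{-1}(0,y)\lesssim |y|^{\alpha-2}$. The sum of squares converges for $\alpha<3/2$ and is $\lesssim\ln N$ at $\alpha=3/2$, so it is within $(\ln N)^2$ in both cases.
\end{itemize}
To prove these Green's function asymptotics, I would use Fourier/symbol comparison (the symbol of $L$ on $\bbZ$ is $\sim c|\theta|^{\alpha-1}$, respectively $\sigma_\alpha\theta^2$ or $\theta^2\ln(1/|\theta|)$). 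I would combine this with a discrete-to-continuum convergence of Dirichlet problems, either via $\Gamma$-convergence of the quadratic forms or via known random-walk/fractional Green's function estimates (cf.~\cite{LSSW17}).

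\item[(b)] The fluctuation term. I need to show $\bbE\,\EE[\psi(0)^2]=O(\ln N)^2$, or at least $o(\text{main term})$; Cauchy--Schwarz then controls the cross term. The key input is that, uniformly in the shift $u$, the discrete Gaussian chain with Hamiltonian $\mathcal H^0_{N,\alpha}(\phi-u)$ has $\mathrm{Var}(\phi(0))$ bounded by the corresponding real Gaussian variance $\beta^{-1}L^{-1}(0,0)$ plus a constant. This is a Brascamp--Lieb type inequality for integer-valued fields with arbitrary offsets. In the long-range setting I would obtain it via the variance comparison of Garban~\cite{garban2023invisibility} / Coquille et al.~\cite{coquille2024quantitative}, or via a Gaussian-mixture (Villain / sine-Gordon duality) representation in which the integer constraint only adds a positive-semidefinite perturbation.

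Plain Brascamp--Lieb only controls the variance, not the mean shift $\EE[\psi(0)]$, which could be of order $1$ but not larger. I would bound this mean by showing that $\EE[\psi]$ solves a discrete equation $L m = f$ with $|f|\le C$, by symmetry/periodicity of the integer lattice in the shift. This gives $|m(0)|\lesssim\sum_y L^{-1}(0,y)$. That bound is \emph{not} small for $\alpha>2$, so it needs more care: I would instead exploit that, for fixed $\eta$, $f$ depends on the fractional parts of $u^\eta$, which decorrelate. Then $\bbE[m(0)^2]\lesssim\sum_y L^{-1}(0,y)^2\cdot O(1)$ at worst, still lower order once the constant is controlled by $\beta$-dependence; alternatively I would absorb this term using that $u^\eta$ has a smooth law. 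Finally, $L^{-1}(0,0)$ is $O(1)$ for $\alpha<2$, $O(\ln N)$ for $\alpha=2$, and $O(N^{\alpha-2})$ or $O(N)$ beyond, which is always $o(N^{2\alpha-3})$ or $o(N^3)$ when $\alpha>3/2$. For $\alpha\le3/2$ these fluctuation bounds together give $O((\ln N)^2)$.
\end{enumerate}

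The main obstacle is step (b): a uniform-in-offset variance and mean bound for the integer-valued long-range chain. I would try first to prove the mean bound by a coupling/monotonicity argument (FKG for the shifted discrete Gaussian, comparing with offsets $u$ and $\lceil u\rceil$, so that $|\EE[\phi(0)]-u(0)|\le 1$ pointwise). This would make the error $O(1)$ and reduce everything to the variance estimate.
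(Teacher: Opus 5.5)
\textbf{Part (a).} This follows the paper's route in outline. Note, however, that the weak convergence in Proposition~\ref{prop:convergenceoccupationmeasure} does not by itself give convergence of $\sum_x G_{N,\alpha}(0,x)^2$. The paper upgrades it using Plancherel and the Fourier bound of Proposition~\ref{prop:prop5.3}, and this is where $\alpha>3/2$ enters.

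\textbf{Part (b) has a genuine gap.} None of the inputs you propose for controlling $\psi(0)=\phi(0)-\lambda G_{N,\alpha}\eta(0)$ is available or valid.

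\emph{Variance.} Brascamp--Lieb concerns log-concave densities on $\R^n$. The domination $\mathrm{Var}_{\bbZ}\le\mathrm{Var}_{\R}$ for the chain with offset $u\equiv 0$ can be read off from positivity of the dual (Poisson summation) weights. With an offset, these weights acquire phases $e^{2\pi i\langle m,u\rangle}$ and positivity is lost. In fact $u\mapsto\ln Z(u)$ is $\bbZ^{\Lambda_N}$-periodic, hence not concave, so exact domination fails for some $u$. The works you cite treat the disorder-free chain ($u\equiv0$). No bound $\mathrm{Var}\le\beta^{-1}G_{N,\alpha}(0,0)+C$ uniform in $u$ is known.

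\emph{Mean.} From $|f|\le C$ you only get $|m(0)|\le C\sum_y G_{N,\alpha}(0,y)$. This is of order $N^{\alpha-1}$ for $\alpha<3$, $N^2/\ln N$ for $\alpha=3$, and $N^2$ for $\alpha>3$. Its square is much larger than the target ($\sum_y G_{N,\alpha}(0,y)^2$, or $(\ln N)^2$ when $\alpha\le 3/2$) for every $\alpha$, not only for $\alpha>2$. The decorrelation heuristic is unsupported: $f$ depends on all of $\eta$ through the Gibbs measure, not locally on fractional parts. Even a bound $\bbE[m(0)^2]\lesssim\sum_y G_{N,\alpha}(0,y)^2$ would be of the same order as the main term and would destroy the limiting constant.

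\emph{FKG.} Integer-valued measures $\propto e^{-\beta\mathcal H^0_{N,\alpha}(\phi-u)}$ are not stochastically monotone in the offset. Holley's condition for a pair term $V(\phi(x)-\phi(y)-(u(x)-u(y)))$ forces $u'-u$ to be constant. Concretely, take the two-site energy $\frac12\psi(x)^2+\frac12\psi(y)^2+(\psi(x)-\psi(y))^2$ with $\psi=\phi-u$. Raising $u$ from $(0.6,0)$ to $(0.6,0.2)$ moves the unique integer ground state from $(1,0)$ down to $(0,0)$. So the sandwich between offsets $u$ and $\lceil u\rceil$ is unfounded, and there is no reason to expect $|\E[\phi(0)]-u(0)|\le 1$. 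The paper's simulations at $\alpha=2.5$ in fact suggest deviations from the real-valued ground state of order $\sqrt N$.

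\textbf{The missing idea.} You need a way to exploit cancellations in the residual $\rho:=\lambda\eta+L_{N,\alpha}\phi$, which satisfies $\lambda G_{N,\alpha}\eta(0)-\phi(0)=\sum_{x}G_{N,\alpha}(0,x)\rho(x)$. Pointwise control of $\rho$, or of its mean, cannot work. The paper controls interval sums instead. It applies the Boltzmann--Gibbs variational principle to the law of $\phi+K\mathbf 1_I$, where $K$ is an integer-valued function of $\phi$: first the sign of $\sum_{x\in I}\rho(x)$, then $\lceil \Cap(I)^{-1}\sum_{x\in I}\rho(x)\rfloor$. The entropy cost is $\Ent(K\mid\phi+K\mathbf 1_I)$. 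This yields $\E[(\sum_{x\in I}\rho(x))^2]\le\frac12\Cap(I)^2+C(\beta)\Cap(I)$ for every $\eta$.

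Summation by parts, using that $G_{N,\alpha}(0,\cdot)$ is even and non-increasing on $\N$, then gives $\|\phi(0)-\lambda G_{N,\alpha}\eta(0)\|_{\mathbb L^2(\Gib)}\le C\,G_{N,\alpha}(0,0)\Cap(\Lambda_N)$. For $\alpha<2$ this is refined by dyadic blocks to $C\ln N$. The bound is far cruder than the real-valued fluctuation scale $\sqrt{G_{N,\alpha}(0,0)}$ you aim for: it is of order $N^{\alpha-2}$, $N/\ln N$, $N$ for $\alpha\in(2,3)$, $\alpha=3$, $\alpha>3$. But it is uniform in $\eta$, and its square is $o(\sum_x G_{N,\alpha}(0,x)^2)$ for every $\alpha>3/2$, which is all the theorem needs. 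It also gives the $(\ln N)^2$ bound for $\alpha\le 3/2$ directly, with no FKG, Brascamp--Lieb or duality input.
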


\begin{remark}
Let us make a few remarks about this result:
\begin{itemize}
\item The limiting integral can be explicitly computed in the case $\alpha \in [3 , \infty]$ using Remark~\ref{rem:remark1.3} and is equal to $1/6$.
\item These results hold for the measure $\mu^{\eta}_{N,\alpha, \lambda, \infty}$ (N.B. The proof is in fact simpler in this case and we refer to Section~\ref{sec:sketchofproof} for a sketch of the argument).
\item This theorem only requires the disorder to consist of independent random variables with mean $0$ and variance $1$ (i.e., we do not need the random variables $(\eta_x)_{x \in \bbZ}$ to be identically distributed).
\item In the case of the nearest-neighbour model (i.e., $\alpha = \infty$), we have $\sigma_\infty = 1$.
\item The same results hold if we replace the squared $\mathbb{L}^2$-norm by the annealed variance , since $\bbE\bigl[\EE[ \phi(0) ]\bigr]^2$ is asymptotically negligible compared to $\bbE\bigl[\EE[ \phi(0)^2 ]\bigr]$, by \eqref{eq:sumandvarianceiid} and
Proposition~\ref{prop:distanceheighttogroundstate}.
\end{itemize}
\end{remark}

The last main result of this article refines the previous one by showing a central limit theorem for the random variable $\phi(0)$ when $\alpha > 3/2$. Combined with Theorem~\ref{thm:deloc}, it identifies the scaling limit of the height $\phi(0)$.

\begin{theorem}[Central limit theorem] \label{thm:thm1.4TCL}
For any decay exponent $\alpha \in (3/2, \infty],$ any inverse temperature $\beta > 0$, and any disorder strength $\lambda > 0$,  if we let $\phi_N$ be distributed according to $\mathbb P_{N,\alpha,\lambda, \beta}^{\mathrm{ann}}$, then
\begin{equation*}
    \frac{\phi_N(0)}{\sqrt{\mathbb E\bigl[\EE[ \phi(0)^2 ]\bigr]}} \overset{\mathcal{L}}{\underset{N \to \infty}{\longrightarrow}} \mathcal N(0,1).
\end{equation*}
\end{theorem}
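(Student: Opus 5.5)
We decompose $\phi_N(0)$ under the annealed law into a Gaussian-like linear functional of the disorder plus a small error. Let $\psi^\eta_N$ denote the real-valued minimiser of $\mathcal H^\eta_{N,\alpha,\lambda}$ over $\R^{\Lambda_N}$ (vanishing outside $\Lambda_N$). Since the Hamiltonian is quadratic, $\psi^\eta_N = \lambda\, \mathcal G_N \eta$, where $\mathcal G_N$ is the discrete Green's function of the long-range (or nearest-neighbour) operator on $\Lambda_N$ with Dirichlet boundary condition. In particular
\begin{equation*}
    \psi^\eta_N(0) = \lambda \sum_{x\in\Lambda_N} \mathcal G_N(0,x)\,\eta_x,
\end{equation*}
a weighted sum of independent centred variables with variance $s_N^2:=\lambda^2\sum_x \mathcal G_N(0,x)^2$. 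We write
\begin{equation*}
    \phi_N(0) = \psi^\eta_N(0) + \bigl(\phi_N(0)-\psi^\eta_N(0)\bigr).
\end{equation*}
The plan is to show three things:
\begin{enumerate}
    \item $\psi^\eta_N(0)/s_N$ satisfies a CLT.
    \item $\bbE\bigl[\EE[(\phi(0)-\psi^\eta_N(0))^2]\bigr] = o(s_N^2)$.
    \item $s_N^2 \sim \bbE\bigl[\EE[\phi(0)^2]\bigr]$.
\end{enumerate}
Slutsky's lemma then concludes.

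Step (2) is the content of Proposition~\ref{prop:distanceheighttogroundstate}, which controls the distance between the integer-valued field and the real ground state. I would invoke it directly. Morally, changing variables $\phi = \psi^\eta_N + \chi$ turns the Gibbs measure into a discrete Gaussian on a shifted lattice with no field, whose fluctuations at $0$ are at most of order $(\ln N)^2$ in second moment (the same bound that appears for $\alpha\le 3/2$ in Theorem~\ref{thm:deloc}). For $\alpha>3/2$ this is $o(N^{2\alpha-3})$, respectively $o(N^3/(\ln N)^2)$ or $o(N^3)$, so it is negligible. Step (3) then follows from (2) and Cauchy--Schwarz, and it is consistent with the limits computed in Theorem~\ref{thm:deloc}: the proof of that theorem identifies $s_N^2$ through the convergence of the rescaled $\mathcal G_N(0,\cdot)$ to $G_\alpha$ or $G/\sigma_\alpha$.

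The main step is (1). The variables $\eta_x$ are independent, centred and of unit variance, but only square integrable, so I would verify the Lindeberg condition. For every $\varepsilon>0$ we need
\begin{equation*}
    \frac{1}{s_N^2}\sum_x \lambda^2\mathcal G_N(0,x)^2\, \bbE\bigl[\eta_x^2 \mathbf 1_{\{\lambda|\mathcal G_N(0,x)\eta_x|>\varepsilon s_N\}}\bigr]\to0.
\end{equation*}
Since the $\eta_x$ are identically distributed, it suffices to show that
\begin{equation*}
    \max_x \mathcal G_N(0,x)^2 = o(s_N^2),
\end{equation*}
i.e.\ that no single site carries a macroscopic share of the variance. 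This needs a pointwise bound on the Green's function.
\begin{itemize}
    \item For $\alpha\ge3$ (including $\alpha=\infty$), $\mathcal G_N(0,x)\lesssim N$ (divided by $\ln N$ when $\alpha=3$), while $s_N^2\asymp N^3$ (divided by $(\ln N)^2$ when $\alpha=3$). The ratio is of order $1/N$.
    \item For $\alpha\in(3/2,3)$, the fractional Green's function with $s=(\alpha-1)/2$ in dimension one satisfies $\mathcal G_N(0,x)\lesssim N^{\alpha-2}$ at distance of order $N$. The worry is near the diagonal: for $\alpha<2$, $G_\alpha$ has a singularity $|x|^{\alpha-2}$ at $0$, which is square-integrable exactly when $\alpha>3/2$. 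On the lattice, the diagonal value $\mathcal G_N(0,0)$ is bounded (for $\alpha<2$), is of order $\ln N$ (for $\alpha=2$), or is $\lesssim N^{\alpha-2}$ (for $\alpha>2$). In every case its square is $o(N^{2\alpha-3})$, because $\alpha>3/2$.
\end{itemize}

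I expect this uniform pointwise Green's function estimate to be the main obstacle, together with the matching of the discrete $\mathcal G_N$ to the continuum kernel. I would take it from the same discrete potential-theoretic estimates used to prove Theorem~\ref{thm:deloc}, and otherwise obtain it by comparing with the random walk with jump kernel $|x-y|^{-\alpha}$ killed on exiting $\Lambda_N$.
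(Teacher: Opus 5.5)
Your proposal is correct and follows the paper's proof. Both run a Lindeberg--Feller CLT for the real ground state $\lambda G_{N,\alpha}\eta(0)$, using $\max_x G_{N,\alpha}(0,x)^2 = G_{N,\alpha}(0,0)^2 = o\bigl(\sum_x G_{N,\alpha}(0,x)^2\bigr)$ (Propositions~\ref{prop:propgreensfunction} and~\ref{prop:convergencesquareGreens}), then transfer via the annealed $\mathbb{L}^2$ bound of Proposition~\ref{prop:distanceheighttogroundstate}, Slutsky, and $\bbE\bigl[\EE[\phi(0)^2]\bigr]\sim\lambda^2\sum_x G_{N,\alpha}(0,x)^2$. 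One correction to your heuristic: that proposition does not give an $O((\ln N)^2)$ second-moment error for every $\alpha$; the squared bounds are $(\ln N)^4$ at $\alpha=2$, $N^{2\alpha-4}$ on $(2,3)$, $N^2/(\ln N)^2$ at $\alpha=3$ and $N^2$ for $\alpha>3$, but each is uniform in $\eta$ and still $o$ of the ground-state variance, so your argument goes through unchanged.
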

\begin{remark} Let us make two remarks about this result: \noindent
    \begin{itemize}
        \item As for the previous theorem, this result stays true if one replaces the $\mathbb L^2$-norm by the square root of the annealed variance in the denominator.
        \item Unlike Theorem~\ref{thm:deloc}, this result requires the random variables $(\eta_x)_{x \in \mathbb{Z}}$ to be i.i.d.
    \end{itemize}
\end{remark}
Let us also define the real-valued long-range Gaussian chain with disorder $\eta$ to be the probability distribution on $\Omega_N^{\mathbb R} := \{u:\mathbb Z\to\mathbb R \mid \ u(x)=0 \text{ for all }x\notin\Lambda_N\}$ given by
\begin{equation} \label{eq:defreal-valuedmodel}
    \mu_{N, \alpha, \lambda, \beta}^{\eta, \R}(du)
    :=
    \frac{1}{Z_{N, \alpha, \lambda, \beta}^{\eta, \R}}
    \exp\left(
            -\beta \mathcal H_{N, \alpha, \lambda}^\eta(u)
    \right)
    \prod_{x\in\Lambda_N}du(x),
\end{equation}
where $Z^{\eta, \R}_{N, \alpha, \lambda, \beta}$ is the normalising constant. The model~\eqref{eq:defreal-valuedmodel} is a multivariate normal distribution which can be studied directly by identifying its mean and covariance matrix (see Section~\ref{sec:sectionrealvaluedgaussian} below). In particular, it is not difficult to show that Theorem~\ref{thm:deloc} and Theorem~\ref{thm:thm1.4TCL} also hold for the real-valued model~\eqref{eq:defreal-valuedmodel}. The integer-valued and real-valued models exhibit the same large-scale behaviour; this is an example of \emph{the invisibility of the integers} (following the terminology of~\cite{garban2023invisibility}). The same phenomenon was established by Garban~\cite{garban2023invisibility} for the disorder-free Gaussian chain in the high-temperature regime; see Section~\ref{sec:sectionrelatedwork}.\\

Our  Localisation/Delocalisation results for the disordered long-range Gaussian chain are summarised in the following table.\footnote{Note that Theorem~\ref{thm:deloc} contains strictly more information than Table 1 as  for $\alpha > 3/2$ the asymptotic behaviour of the $\mathbb L^2$-norm of the height is also identified.}

\medskip

\renewcommand{\arraystretch}{1.5}
\begin{center}
\begin{tabular}{llllll}
	\hline
	\multicolumn{1}{|l|}{$\bbE\bigl[\EE[ \phi(0)^2 ]$} & 	\multicolumn{1}{|l|}{$\mathbb{Z}$-valued}& \multicolumn{1}{l|}{$\R$-valued}  \\ \hline   
    \multicolumn{1}{|l|}{$\alpha\in (1 , 3/2)$} & \multicolumn{1}{|l|}{$\leq (\ln N)^2$} & \multicolumn{1}{l|}{$1$} \\ \hline
    \multicolumn{1}{|l|}{$\alpha = 3/2$} & \multicolumn{1}{|l|}{$\leq (\ln N)^2$} & \multicolumn{1}{l|}{$\ln N$} \\ \hline
	\multicolumn{1}{|l|}{$\alpha \in (3/2 , 3)$} & \multicolumn{1}{|l|}{$N^{2\alpha - 3}$ } & \multicolumn{1}{l|}{$N^{2\alpha - 3}$} \\ \hline
	\multicolumn{1}{|l|}{$\alpha=3$} &\multicolumn{1}{|l|}{$N^3/(\ln N)^2$} & \multicolumn{1}{l|}{$N^3/(\ln N)^2$} \\ \hline        
	\multicolumn{1}{|l|}{$\alpha>3$} &\multicolumn{1}{|l|}{$N^3$} & \multicolumn{1}{l|}{ $N^3$} \\ \hline                 
\end{tabular}
\captionof{table}{Localisation/Delocalisation for the disordered long-range Gaussian chain (N.B. the results hold for any $\beta > 0$ and any $\lambda >0$). \label{table1.1}}
\end{center}

\begin{figure}
    \centering
    \includegraphics[width=1\linewidth]{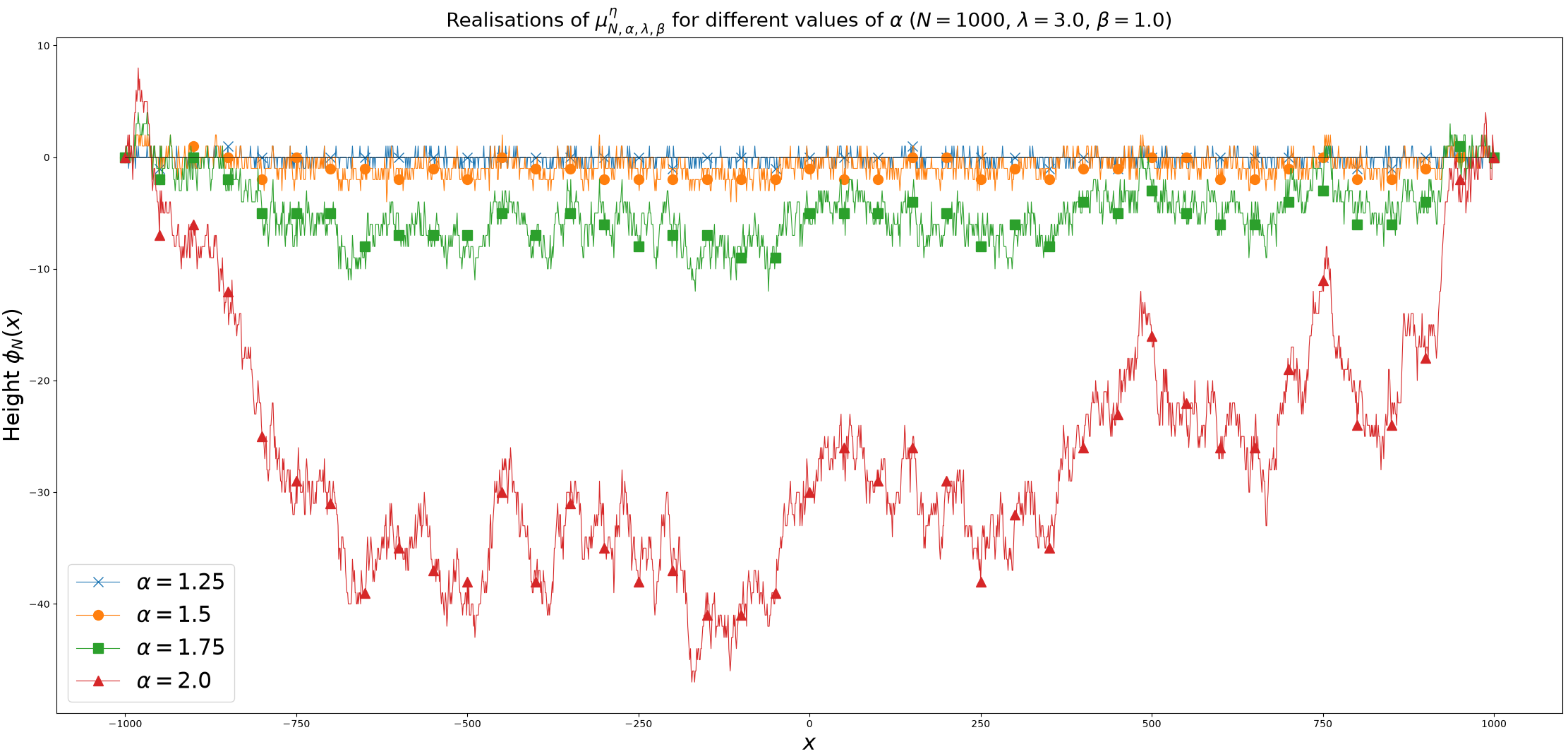}
    \caption{Four simulations with the Metropolis--Hastings algorithm of the disordered long-range DGC with the same realisation of the disorder (here i.i.d. standard Gaussians) and different values of the decay exponent $\alpha$.}
    \label{fig:several_alpha}
\end{figure}

\subsection{Related works} \label{sec:sectionrelatedwork}

This section presents an overview of related results.

\medskip

\textit{The long-range Gaussian chain.} 

\medskip

The long-range Gaussian chain without disorder (i.e., the model~\eqref{eq:defGibbsmeasure} with $\lambda = 0$) exhibits a different behaviour than its disordered counterpart and has been studied on its own. For the non-disordered model, the threshold for the localisation/delocalisation is the decay exponent $\alpha = 2$, where the model is known to exhibit a \emph{roughening} phase transition from localisation to delocalisation when the temperature increases. More precisely, the non-disordered counterpart of Table~\ref{table1.1} is given by the following table (N.B. In all the cases, upper and lower bounds on the variance of the height $\phi(0)$ are known, matching up to a multiplicative constant).

\bigskip

\begin{center}
\begin{tabular}{llllll}
	\hline
	\multicolumn{1}{|l|}{$E_{N , \beta , \alpha}[ \phi(0)^2 ]$} & 	\multicolumn{1}{|l|}{$\mathbb{Z}$-valued, $\beta \gg 1$}& \multicolumn{1}{l|}{$\mathbb{Z}$-valued, $\beta \ll 1$} &  \multicolumn{1}{l|}{$\mathbb{R}$-valued} \\ \hline   
    \multicolumn{1}{|l|}{$\alpha\in (1 , 2)$} & \multicolumn{1}{|l|}{$1$ } & \multicolumn{1}{l|}{$1$} & \multicolumn{1}{l|}{$1$} \\ \hline
	\multicolumn{1}{|l|}{$\alpha=2$} & \multicolumn{1}{|l|}{$1$ } & \multicolumn{1}{l|}{$\ln N$} & \multicolumn{1}{l|}{$\ln N$} \\ \hline
	\multicolumn{1}{|l|}{$\alpha\in(2,3)$} & \multicolumn{1}{|l|}{$N^{\alpha-2}$} &\multicolumn{1}{l|}{$N^{\alpha-2}$} &\multicolumn{1}{l|}{$N^{\alpha-2}$} \\ \hline
	\multicolumn{1}{|l|}{$\alpha=3$} &\multicolumn{1}{|l|}{$N/\ln N$} & \multicolumn{1}{l|}{$N/\ln N$}  & \multicolumn{1}{l|}{$N/\ln N$}  \\ \hline        
	\multicolumn{1}{|l|}{$\alpha>3$} &\multicolumn{1}{|l|}{$N$} & \multicolumn{1}{l|}{ $N$} & \multicolumn{1}{l|}{ $N$} \\ \hline                 
\end{tabular}
\captionof{table}{Localisation/Delocalisation for the long-range Gaussian chain without disorder (in the third column, the results hold for any $\beta >0$).\label{table1}}
\end{center}
\medskip

\begin{figure}
    \centering
    \begin{subfigure}[b]{\linewidth}
        \includegraphics[width=\linewidth]{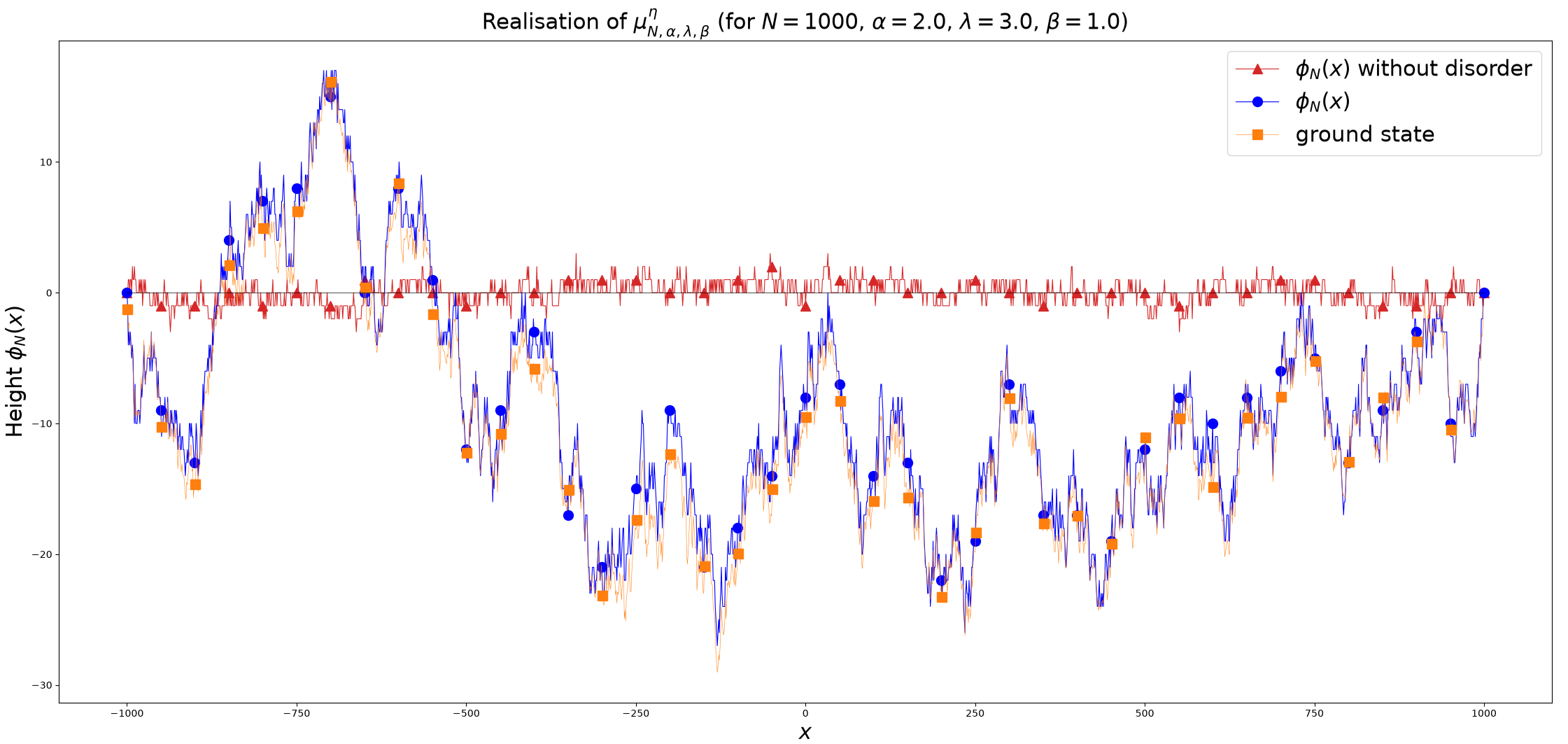}
        \caption{At $\alpha=2$, the typical height of the DGC at the origin is at most $\sqrt{\ln N}$, whereas for the disordered version it is of order $\sqrt N$; in accordance with Proposition \ref{prop:distanceheighttogroundstate}, the distance between the disordered DGC and the real-valued ground state is logarithmically small.}
    \end{subfigure}

    \vspace{5mm}
    \begin{subfigure}[b]{\linewidth}
        \includegraphics[width=\linewidth]{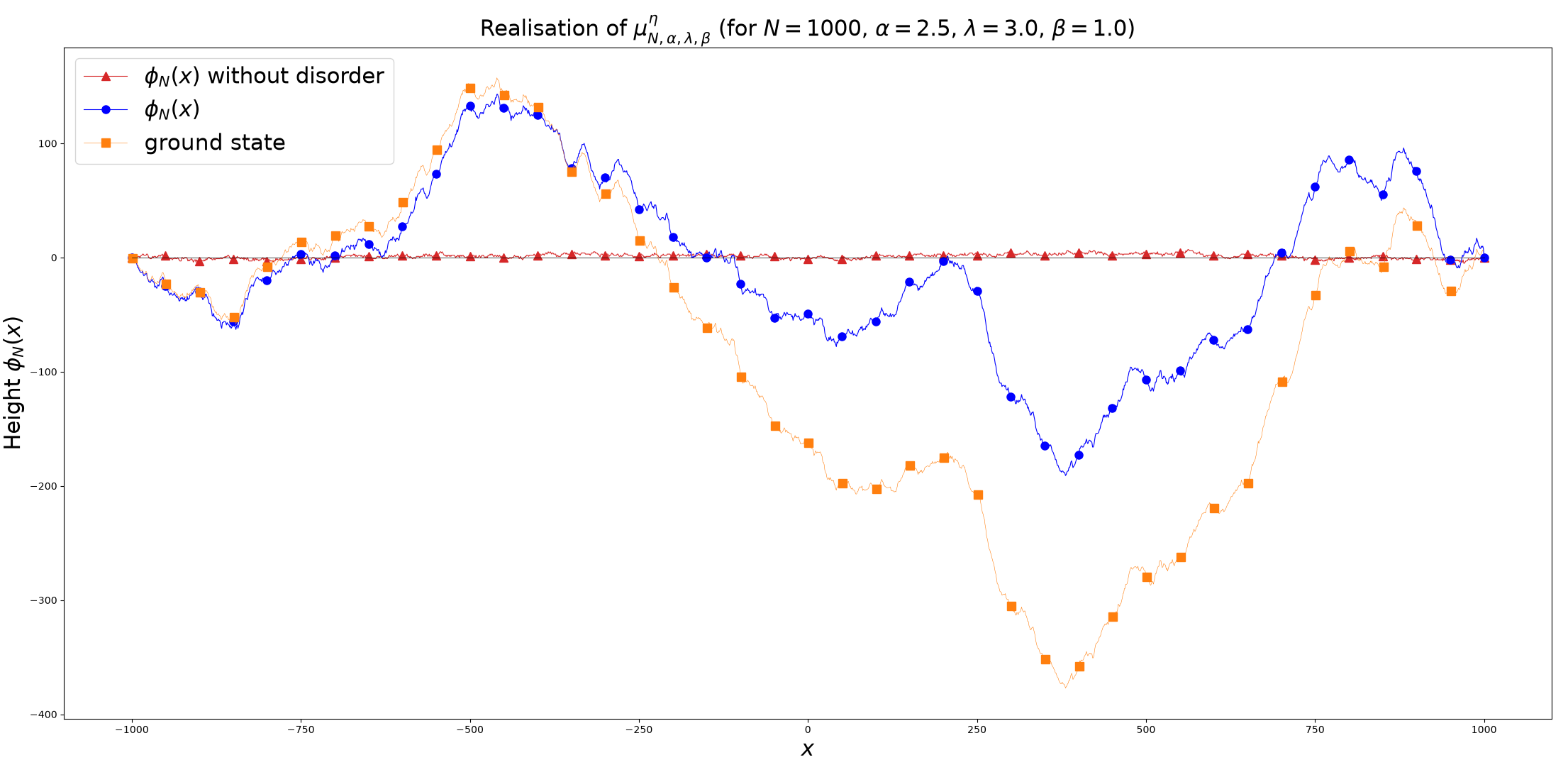}
        \caption{At $\alpha=2.5$, both the DGC and the disordered DGC delocalise, but on different scales (the typical height is of order $N^{1/4}$ for the former and of order $N$ for the latter); in accordance with Proposition \ref{prop:distanceheighttogroundstate}, the disordered DGC can deviate from the real-valued ground state by approximately $\sqrt N$.}
    \end{subfigure}
    \caption{Simulations with the Metropolis--Hastings algorithm of the long-range DGC (in red, triangles) and disordered DGC (in blue, circles), with decay exponent $\alpha=2$ (top) and $\alpha=2.5$ (bottom).
    We have also represented (in orange, squares) the real-valued ground state of the model (see Section \ref{sec:sketchofproof} or \ref{sec:sectionrealvaluedgaussian}). }
    \label{fig:with_or_without_you_eta}
\end{figure}

These results were proved in a series of works. The case of the critical decay exponent $\alpha = 2$ was first studied by Kjaer and Hilhorst~\cite{KH82} who established the logarithmic delocalisation for the chain with periodic boundary conditions for $\beta = 1$ when the long-range interaction is $\frac{1}{|i - j|^2 - 1/4}$. Still for the decay exponent $\alpha = 2$, Fr\"{o}hlich and Zegarlinski~\cite{frohlich1991phase} showed the localisation of the chain at sufficiently low temperature by adapting the Peierls argument for the long-range Ising model of~\cite{FrohlichSpencer}. For the decay exponents $\alpha < 2$, the localisation at every inverse temperature was established by Garban~\cite{garban2023invisibility} (using the correlation inequality of~\cite{aizenman2021depinning, regev2017inequality}). In the regime $\alpha \geq 2$, Garban showed the delocalisation of the chain for sufficiently small $\beta$, and even identified the scaling limit of the chain, showing that the model exhibits invisibility of the integers (N.B. the same invisibility phenomenon is observed on the disordered Gaussian chain but the incorporation of the disorder has the effect of changing the universality class of the model). In the low temperature regime, the delocalisation was proved qualitatively in~\cite{coquille2024absence} and quantitatively in~\cite{coquille2024quantitative}.

\medskip

\textit{Disordered spin system and the Imry--Ma phenomenon}

\medskip

One of the motivations to study disordered spin system is the Imry--Ma phenomenon. This phenomenon was predicted by Imry and Ma~\cite{IM75}, rigorously proved by Aizenman and Wehr~\cite{AW1989, AW1990}, and asserts that the incorporation of a disorder in the form of a random external field can significantly alter the qualitative properties of spin systems by causing a disappearance of the first-order phase transitions in low dimensions. The same phenomenon appears on the Gaussian chain where the addition of a quenched disorder changes the localisation/delocalisation and scaling limit of the model (see Tables~\ref{table1.1} and~\ref{table1}).

The results of Aizenman and Wehr are qualitative, and the question of the quantification of the Imry--Ma phenomenon was first addressed for the two-dimensional random-field Ising model. In this direction, Chatterjee~\cite{C18} obtained an upper bound at the rate $(\ln \ln L)^{-1/2}$ on the effect of boundary conditions on the magnetisation at the center of a box of side length~$L$. A polynomial rate was then obtained by Aizenman and Peled~\cite{AP19} and exponential decay was obtained first by Ding and Xia~\cite{ding2019exponential} at zero temperature and then by Ding and Xia~\cite{ding2021exponential} and by Aizenman, Harel and Peled~\cite{AHP20} at every temperature (we also refer to ~\cite{FI84, B85, DKP95, CJN18} for related results in the regime of high temperature or strong disorder).

In the one dimensional case, a detailed study of the random field Ising chain (e.g. domain wall structure and behaviour of the free energy in the large interaction limit) was undertaken by Collin, Giacomin and Hu~\cite{CGH2, AHLCGH}, Collin~\cite{CollinALEA, collin2025further} and Collin, Giacomin, Greenblatt and Hu~\cite{collin2026lyapunov}.

In dimension $d \geq 3$, the behaviour is of a different nature and the model exhibits a phase transition in the temperature when the strength of the disorder is sufficiently small. In this line, Imbrie~\cite{I85} proved that the ground state of the random field Ising model exhibits long-range order when the strength of the field is sufficiently small, and this result was extended to the low-temperature regime by Bricmont and Kupiainen~\cite{BK87L, BK88} by using a sophisticated renormalisation group argument. Recently, Ding and Zhuang~\cite{DingZhuang} obtained a new and simple proof of the result of~\cite{BK87L, BK88} and extended it to the $q$-states Potts model, and Ding, Liu and Xia~\cite{ding2024long} proved the existence of long-range order for any inverse temperature $\beta > \beta_c$ (for sufficiently small disorder strength).

Additional background on the random-field Ising model can be found in~\cite[Chapter 7]{Bo06} and~\cite{N98T}.

\medskip

\textit{Long-range spin systems with and without disorder.}

\medskip

For the long-range Ising model in one dimension, \cite{dobrushin_uniqueness} and Ruelle~\cite{ruelle1968statistical} proved that there is no phase transition when the decay exponent satisfies $\alpha > 2$, and Dyson~\cite{Dyson69} showed the existence of such a phase transition for $\alpha \in (1 , 2)$. In the case of the critical decay exponent $\alpha = 2$, the existence of a phase transition was established by Fr\"{o}hlich and Spencer~\cite{FrohlichSpencer} (N.B. The phenomenology shares some similarities with the one of the long-range Gaussian chain for which the critical decay exponent for the localisation/delocalisation is also $\alpha = 2$).

For the long-range random field Ising model, the results of Aizenman and Wehr~\cite{AW1990} show that, for any decay exponent $\alpha \geq 3/2$, the model does not exhibit a phase transition in the temperature for any disorder strength. For the decay exponents $\alpha \in ( 3 - \ln 3/ \ln 2  , 3/2)$, Cassandro, Orlandi and Picco~\cite{cassandro2009phase} showed that the model exhibits long-range order at sufficiently low temperature and for sufficiently small disorder strength. The result was recently extended by Ding, Huang and Maia~\cite{ding2024phase} to any decay exponent $\alpha \in (1 , 3/2)$.

In higher dimension, the nearest-neighbour Ising model already presents a phase transition, so its long-range version inherits it.
The contribution~\cite{ding2024phase} shows the existence of a phase transition in dimension 2 for the random field Ising model with decay exponents $\alpha \in (2 , 3]$, while it was already ruled out by \cite{AW1990} in the case $\alpha>3$.
The work of Affonso, Bissacot and Maia~\cite{affonso2023phase} establishes the existence of long-range order in dimension $d \geq 3$ for any $\alpha \in (d , \infty)$.

\medskip
\textit{Interface models with a disorder:} 

\medskip

Our article studies the Imry--Ma phenomenon in the context of random interfaces. This line of investigation was initiated by Bovier and K\"{u}lske~\cite{BK94, BK96}, who studied a related model integer-valued random interfaces on $\mathbb{Z}^d$ with $d \geq 1$ and with a Hamiltonian taking the form
\begin{equation} \label{eq:modelBK}
    H(\phi) := \sum_{x\sim y} |\phi(x) - \phi(y)| - \lambda \sum_{k \in \mathbb{Z}} \sum_{x} \eta_x (k) \mathbf 1_{\{ \phi(x) = k \}},
\end{equation}
where $(\eta_x(k))_{x\in\mathbb Z^d, k\in\mathbb{Z}}$ are i.i.d. random variables. In~\cite{BK94} and under some suitable assumptions on the law of the disorder, the non-existence of shift-covariant Gibbs measures in dimension $d \leq 2$ was shown by adapting the argument of~\cite{AW1990}. In~\cite{BK96}, building on the renormalisation group approach of~\cite{BK88}, they proved the existence of infinite-volume Gibbs measures in dimensions $d \geq 3$ when the strength of the disorder and the temperature are sufficiently small. Recent developments on a variant of the model~\eqref{eq:modelBK} (where the interface~$\phi$ is real valued and the absolute value potential is replaced by the square potential) include the contributions of Dembin, Elboim, Hadas and Peled~\cite{dembin2024minimal, dembin2025minimal} which establish bounds on the geometric and energetic fluctuations of the interface. We additionally refer to~\cite{dembin2024minimal} for a detailed review of the literature on the topic.

In a related context, Cotar and Buchholz~\cite{buchholz2023aizenman} showed the uniqueness of shift-covariant gradient Gibbs measure for a class of random interfaces with a disorder taking the form of a random annealed Gaussian potential. We also refer to the recent contribution of Buchholz, Cotar and Schweiger~\cite{buchholz2026gradient} for additional results in this line of research.

Closer to the model~\eqref{eq:defGibbsmeasure} are the models of random interfaces with a random field taking the form 
\begin{equation} \label{mod.randomsurface}
    \mathbb{P}(d \phi) := \frac{1}{Z} \exp \left( - \sum_{x \sim y} V \left( \phi(x) - \phi(y) \right) + \lambda \sum_{x}  \eta_{x} \phi(x) \right) \prod_x d \phi(x) ,
\end{equation}
where $V : \R \mapsto \R$ is a function growing sufficiently fast at infinity. Various qualitative and quantitative aspects of these models have been studied in various contributions of Cotar, van Enter, K\"{u}lske and Orlandi~\cite{KO06, KO08, VK08, CK12, CK15}. The closest result to Theorem~\ref{thm:deloc} for the model~\eqref{mod.randomsurface} is the one of~\cite{dario2023random} which identifies the typical height of the interface in any dimension $d \geq 1$ (see also~\cite{sakagawa2024maximum} which identifies the size of the maximum of the interface).

\subsection{Sketch of proof} \label{sec:sketchofproof}

The proof of Theorem \ref{thm:qualitative-delocalisation} is treated in Section~\ref{sec:qualitative-delocalisation}. We argue by contradiction, assume the existence of a shift-covariant Gibbs measure and show that it implies the two following contradicting statements. In Lemma~\ref{lem:PhiB_Integrable}, we use the shift-covariance property to prove the upper bound: for any discrete interval $B \subseteq \bbZ$,
\begin{equation*}
\mathbb E\bigl[\mu^\eta \big[ \big|\sum_{x\in B}\sum_{y\notin B}
    |x-y|^{-\alpha}\bigl(\phi(y)-\phi(x)\bigr)\big|\big]\bigr]
    \le
    C \mathbb E \bigl[\mu^\eta[|\phi(0)|]\bigr] \Cap(B).
\end{equation*}
with $\Cap(B):= \sum_{x \in B} \sum_{y \notin B} |x-y|^{-\alpha}$ (see Section~\ref{sec:sec2.5}).
In Lemma~\ref{lem:balance-estimate}, we show that any infinite-volume Gibbs measure satisfies the inequality:
for any discrete interval \(B\subseteq \mathbb Z\),
\begin{equation*}
    \bigl|\lambda \sum_{x \in B} \eta_x+\mu^\eta \big[\sum_{x\in B}\sum_{y\notin B}
    |x-y|^{-\alpha}\bigl(\phi(y)-\phi(x)\bigr) \big] \bigr|
    \le
    \frac12\operatorname{Cap}_\alpha(B)
    \qquad\text{for \(\mathbb P\)-a.e. \(\eta\).}
\end{equation*}
We then observe that the sum $\sum_{x \in B} \eta_x$ is typically of order $|B|^{1/2}$, which, for sufficiently large $|B|$, is larger than $\operatorname{Cap}_\alpha(B)$ (see~\eqref{eq:bound_capacity}). Combining this observation with the two previous inequalities leads to a contradiction, thereby ruling out the existence of a shift-covariant Gibbs measure.

In order to simplify the presentation of the quantitative argument, we will sketch the proof for the ground state of the model in the case $\lambda = 1$, i.e., for the solution of the minimisation problem
\begin{equation} \label{eq:groundstatesketch}
    \min_{\phi \in \Omega_N}  \frac12 \sum_{\{x,y\}\subset\mathbb Z, x\ne y} |x-y|^{-\alpha}(\phi(x)-\phi(y))^2 - \sum_{x\in\Lambda_N} \eta_x \phi(x).
\end{equation}
To further simplify the notation, we will assume that the minimiser of~\eqref{eq:groundstatesketch} is always unique and will denote it by $\phi^\eta_N$ (N.B. This uniqueness property only holds on a set of Lebesgue full measure over the disorder $\eta \in \R^{\Lambda_N}$).

Our strategy relies on comparing the integer-valued ground state of~\eqref{eq:groundstatesketch}, to the real-valued ground state, i.e. the minimizer of
\begin{equation*}
    \min_{u \in \Omega_N^\R}  \frac12 \sum_{\{x,y\}\subset\mathbb Z, x\ne y} |x-y|^{-\alpha}(u(x)-u(y))^2 - \sum_{x\in\Lambda_N} \eta_x u(x).
\end{equation*}
This minimisation problem has a unique solution for every realisation of the disorder $\eta \in \R^{\Lambda_N}$ and the minimiser is given by the formula
\begin{equation} \label{eq:def.groundstatesketch}
    G_{N , \alpha} \eta(x) := \sum_{y \in \Lambda_N}  G_{N , \alpha}(x, y) \eta_y
\end{equation}
where the function $G_{N, \alpha}$ is the discrete fractional Green's function (see Section~\ref{sec:sec2.2} below). It can be equivalently characterised as the unique solution of the linear equation (see Section~\ref{sec:sec2.2} and~\eqref{eq:defoperatuerLNalpha} for the definition of the operator $L_{N , \alpha}$)
\begin{equation} \label{eq:def2GSrealvalued}
    \left\{ 
    \begin{aligned}
    -L_{N , \alpha} (G_{N , \alpha} \eta) = \eta &~\mbox{in}~ \Lambda_N, \\
    G_{N , \alpha} \eta = 0  &~\mbox{in}~ \mathbb{Z} \setminus \Lambda_N.
    \end{aligned} \right.
\end{equation}
The core of the argument is then the proof of the two following results:
\begin{itemize}
    \item One has the asymptotic expansion
    \begin{equation} \label{eq:asympexpGSsketch}
        \mathbb{E} \left[ (G_{N , \alpha} \eta(0))^2 \right] = 
        \left\{ \begin{aligned}
        N^{2 \alpha - 3} \left( \int_{-1}^1 G_{\alpha}(x)^2 \, dx + o_{N \to \infty} (1 ) \right) & \mbox{ for } \alpha \in (3/2, 3), \\
        \frac{N^3}{(\ln N)^2} \left( \int_{-1}^1 G(x)^2 \, dx +  o_{N \to \infty} \left(1\right) \right)  & \mbox{ for } \alpha =3 , \\
         N^3 \left( \int_{-1}^1 G(x)^2 \, dx +  o_{N \to \infty} \left(1\right) \right)  & \mbox{ for } \alpha \in (3 , \infty],
        \end{aligned} \right.
    \end{equation}
   and the central limit theorem, for any $\alpha \in (3/2 , \infty),$
    \begin{equation}  \label{eq:CLTsketch}
         \frac{G_{N , \alpha} \eta(0)}{\sqrt{ \mathbb{E} \left[ (G_{N , \alpha} \eta(0))^2 \right] }} \overset{\mathcal{L}}{\underset{N \to \infty}{\longrightarrow}} \mathcal N(0,1).
    \end{equation}
    \item One has the asymptotic comparison
    \begin{equation} \label{eq:GSclosesketch}
        \mathbb{E} \left[ \left( \phi^\eta_N(0) - G_{N , \alpha} \eta(0) \right)^2 \right] = o_{N \to \infty} \left( \mathbb{E} \left[ (G_{N , \alpha} \eta(0))^2 \right] \right)
    \end{equation}
\end{itemize}
Theorems~\ref{thm:deloc} and~\ref{thm:thm1.4TCL} follow directly from these results.

The proof of the expansion~\eqref{eq:asympexpGSsketch} and the central limit theorem~\eqref{eq:CLTsketch} is presented in Section~\ref{sec:scalinglimrealvaluedGS} (for~\eqref{eq:asympexpGSsketch}) and Section~\ref{sec:secCLT} (for~\eqref{eq:CLTsketch}). The starting point is the identity~\eqref{eq:def.groundstatesketch} which implies
\begin{equation*}
    \mathbb{E} \left[ (G_{N , \alpha} \eta(0))^2 \right] = \sum_{x \in \Lambda_N}  G_{N , \alpha}(0, x)^2.
\end{equation*}
We then study the asymptotic behaviour of the right-hand side above by relying on probabilistic arguments : the map $G_{N , \alpha}$ is the Green's function of a long-range random walk (see Section~\ref{sec:sec2.2}), and its asymptotic behaviour can be studied by using the results on the convergence of the occupation measure for discrete symmetric stable processes of~\cite[Lemma 12.3]{LSSW17} (in the case $\alpha \in (3/2, 3)$), the invariance principle of~\cite[Theorem 1]{sepanski1997some} (for $\alpha = 3$) and the pointwise convergence of Green's function of~\cite[Section 23, Theorem 1]{spitzer1976principles} (for $\alpha \in (3 , \infty]$). The central limit theorem~\eqref{eq:CLTsketch} is a direct consequence of the Lindeberg--Feller theorem (see e.g. \cite[Theorem 3.4.10]{Durrett_2019} or \cite[Theorem 27.2]{Bilingsley_1995}) which generalises the central limit theorem to triangular arrays of random variables, once the following convergence has been proved
\begin{equation*}
        \max_{y \in \Lambda_N}  \frac{G_{N , \alpha}(0, y)^2}{ \sum_{x \in \Lambda_N} G_{N , \alpha}(0, x)^2} \underset{N \to \infty}{\longrightarrow} 0.
\end{equation*}
This convergence is obtained by proving upper bounds on the Green's function (see Proposition~\ref{prop:propgreensfunction}) together with the convergence~\eqref{eq:asympexpGSsketch}.

The proof of~\eqref{eq:GSclosesketch} can be found in Section~\ref{sec:quantdelocheightorigin}. It is a combination of the following results:
\begin{itemize}
\item For any discrete interval $I \subseteq \Lambda_N$, one has the upper bound
\begin{equation} \label{eq:upperboundintervalssketch}
    \left| \sum_{x \in I} L_{N , \alpha} \left( \phi^\eta_N - G_{N , \alpha} \eta \right) (x) \right| \leq \operatorname{Cap}_\alpha(I).
\end{equation}
\item For any function $f : \mathbb{Z} \to \R$ which is equal to $0$ in $\mathbb{Z} \setminus \Lambda_N$ and which satisfies, for any discrete interval $I \subseteq \Lambda_N$,
\begin{equation} \label{eq:upperboundspatialaverageLNf}
 \left| \sum_{x \in I} L_{N , \alpha} f (x) \right| \leq \operatorname{Cap}_\alpha(I),
\end{equation}
one has the upper bound
\begin{equation} \label{eq:RHSdiffgroundstatesketch}
    \left| f (0) \right| \leq C \begin{cases}
        \ln N & \mbox{ for } \alpha \in (1,2), \\
        (\ln N)^2 & \mbox{ for } \alpha =2, \\
        N^{\alpha-2} & \mbox{ for } \alpha \in (2,3), \\
        N/\ln N & \mbox{ for } \alpha =3,\\
        N & \mbox{ for } \alpha >3.
    \end{cases}
\end{equation}
\end{itemize}
The inequality~\eqref{eq:GSclosesketch} follows by combining the two previous items with $ f := \phi^\eta_N - G_{N , \alpha} \eta$ and by noting that the right-hand side of~\eqref{eq:RHSdiffgroundstatesketch} is always smaller than the one of~\eqref{eq:asympexpGSsketch}.

The proof of~\eqref{eq:upperboundintervalssketch} is obtained by testing the functions $\phi_1 := \phi^\eta_N + \mathbf{1}_I$ and $\phi_2 := \phi^\eta_N - \mathbf{1}_I$ in the minimisation problem~\eqref{eq:groundstatesketch} (and by using the identity~\eqref{eq:def2GSrealvalued}). The proof of~\eqref{eq:RHSdiffgroundstatesketch} is obtained by writing (see~\eqref{eq:def2GSrealvaluedsec2} below)
\begin{equation*}
    f(0) = \sum_{x \in \Lambda_N} G_{N , \alpha} (0, x) (- L_{N , \alpha}) f (x),
\end{equation*}
and by using the upper bound~\eqref{eq:upperboundspatialaverageLNf} together with the properties of the Green's function $G_{N , \alpha}$ listed in Proposition~\ref{prop:propgreensfunction}.

The extension of these results from the ground state to the Gibbs measure~\eqref{eq:defGibbsmeasure} requires an additional input: instead of relying on the minimisation problem~\eqref{eq:groundstatesketch}, we make use of the Boltzmann--Gibbs variational principle (see Section~\ref{sec:sectionBoltzmannGibbs}) which identifies the Gibbs measure~\eqref{eq:defGibbsmeasure} as the solution of a minimisation problem (over the probability measures on~$\Omega_N$) involving an energy term (given by the Hamiltonian $\mathcal H_{N, \alpha,\lambda}^\eta$) and an entropy term. The general strategy of the proof is then similar to the one sketched above for the ground state with additional technicalities due to the incorporation of the entropic term in the minimisation principle.

\subsection{Convention for constants} Throughout this article, the symbols $C$ and $c$ denote positive constants which may vary from line to line, with $C$ increasing and $c$ decreasing. Except if explicitly stated otherwise, these constants may depend on the decay exponent $\alpha$, the inverse temperature $\beta$ and the disorder strength $\lambda$.

\section{Notation and preliminary results}

\subsection{Notation}
For $k \in \mathbb{Z}$ (or $\mathbb{R}$), we set $|k|_+ := 2+ |k|$.

When $\alpha,\lambda$ and $\beta$ are fixed (i.e. most of the time in the rest of the paper), we abbreviate
\[
    \Gib:=\Gibbs, \qquad \E:=\EE  \qquad \mbox{and}\qquad \mathcal{H}^\eta_N:=\mathcal{H}^\eta_{N, \alpha, \lambda}.
\]

For $B \subset \Lambda_N$, we denote by $\mathbf{1}_B$ the element of $\Omega_N$ whose coordinates are $0$ everywhere but in $B$, where they equal $1$; when $B=\{x\}$, we write $\delta_x:=\mathbf{1}_{\{x\}}$.

\subsection{The Dirichlet Green's function} \label{sec:sec2.2}

Let
\[
    \Omega_N^{\mathbb R}
    :=
    \{u:\mathbb Z\to\mathbb R:\ u(x)=0 \text{ for all }x\notin\Lambda_N\}
    \simeq \mathbb R^{\Lambda_N},
\]
and let $L_{N, \alpha}$ be the operator on $\Omega_N^\R$ defined by
\begin{equation} \label{eq:defoperatuerLNalpha}
    L_{N, \alpha} f(x)=\sum_{y \neq x}\dfrac{f(y)-f(x)}{|y-x|^\alpha}, \qquad \forall f\in \Omega_N^{\mathbb R},  \quad \forall x \in \Lambda_N.
\end{equation}
It is straightforward to check that $\mathcal{H}_{N, \alpha}^0(\phi)= - \frac12 \langle \phi, L_{N, \alpha} \phi \rangle$ for all $\phi \in \Omega_N$, where $\langle \cdot,\cdot\rangle$ denotes the standard scalar product on $\ell^2(\Lambda_N)$.
This observation implies that $- L_{N, \alpha}$ is a positive definite symmetric operator, so it admits a (symmetric) inverse denoted by $G_{N, \alpha}$.
For any function $h : \Lambda_N \to \R$, let us denote by
\begin{equation*}
   \forall x \in \Lambda_N, ~ G_{N , \alpha}h(x) = \sum_{x \in \Lambda_N}  G_{N , \alpha}(x , y) h(y),
\end{equation*}
so that the function $G_{N , \alpha}h$ is the unique solution of the equation
    \begin{equation} \label{eq:def2GSrealvaluedsec2}
   \left\{ 
    \begin{aligned}
    -L_{N , \alpha} f = h &~\mbox{in}~ \Lambda_N, \\
    f = 0  &~\mbox{in}~ \mathbb{Z} \setminus \Lambda_N.
    \end{aligned} \right.
\end{equation}
It is well-known that this inverse is the Green's function of the continuous-time random walk whose generator is $L_{N, \alpha}$.
Thus, we have
\begin{equation*}
    G_{N, \alpha}(x,y) = \langle \delta_x,(-L_{N, \alpha})^{-1} \delta_y \rangle =  \int_0^{+\infty} p_t^{\Lambda_N}(x,y) \mathrm{d}t \qquad \text{with }p_t^{\Lambda_N}(x,y)=\langle \delta_x,e^{t L_{N, \alpha}} \delta_y \rangle.
\end{equation*}
Equivalently, $G_{N, \alpha}$ is (up to some multiplicative factor) the Green's function of the discrete-time long-range random walk $(X_k)_{k \geq 0}$ with transition kernel $q_\alpha$ given by $$q_\alpha(0)=0 \qquad \mbox{and}\qquad q_\alpha(n)=\dfrac{|n|^{-\alpha}}{c_\alpha} \, \forall n\neq 0,$$
where $c_\alpha=2\sum_{n \geq 1} n^{-\alpha}$ is the total jump rate of the continuous-time random walk.
More precisely,
$$G_{N, \alpha} (x,y)= c_\alpha^{-1} \mathbf E\left[ \sum_{k=0}^{\tau_N-1} \mathbf{1}_{\{ X_k=y \}} \mid X_0=x\right]$$
with $\tau_N := \inf\{k\ge0:\ X_k\notin\Lambda_N\}$ and $\mathbf E$ denoting the expectation with respect to the random walk.

Either of these representations (continuous or discrete) may be used interchangeably, depending on which is more suitable.
We list in the following proposition several properties of this Green's function that we will use extensively in the rest of the paper. Note that none of these properties is new, but we provide proofs (or precise references) in Appendix~\ref{appendix}.

\begin{proposition}[Properties of the Green's function]\label{prop:propgreensfunction}
The following properties hold:
\begin{enumerate}
    \item (Vanishing) For all $x \in \mathbb Z \setminus \Lambda_N$, $G_{N,\alpha}(0,x)=0$
    \item (Symmetry and monotonicity) The function $x \mapsto G_{N, \alpha}(0,x)$ is even and non-increasing on $\N = \{0 , 1 , 2 , \ldots\}$.
    \item (Asymptotic upper bounds) There exists some constant $C$ depending only on $\alpha$ such that for all $N \in \N$ and all $x \in \Lambda_N$
    \begin{equation*}
        G_{N, \alpha}(0,x) \leq C \begin{cases}
            |x|_+^{\alpha-2} & \mbox{ for }\alpha \in (1,2), \\
            1+\ln(N/|x|_+) & \mbox{ for }\alpha =2, \\
            N^{\alpha-2} & \mbox{ for }\alpha \in (2,3), \\
            N/\ln N & \mbox{ for }\alpha =3, \\
            N & \mbox{ for }\alpha >3. \\
        \end{cases}
    \end{equation*}
\end{enumerate}
\end{proposition}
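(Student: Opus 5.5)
Items (1) and (2) are structural, and I will treat them first using the discrete-time random walk representation
\[
G_{N,\alpha}(0,x)=c_\alpha^{-1}\mathbf E_0\Bigl[\sum_{k<\tau_N}\mathbf 1_{\{X_k=x\}}\Bigr].
\]

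\emph{Item (1).} Vanishing outside $\Lambda_N$ is immediate: the walk is killed upon leaving $\Lambda_N$, so it accumulates no occupation time there. Equivalently, $G_{N,\alpha}h\in\Omega_N^{\R}$ by definition.

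\emph{Evenness.} This follows from the reflection symmetry $x\mapsto -x$. Both the kernel $q_\alpha$ and the box $\Lambda_N$ are invariant under it, so $G_{N,\alpha}(0,x)=G_{N,\alpha}(0,-x)$.

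\emph{Monotonicity on $\N$.} By symmetry, $G_{N,\alpha}(0,x)=G_{N,\alpha}(x,0)=P_x(\text{hit }0\text{ before }\tau_N)\,G_{N,\alpha}(0,0)$. It therefore suffices to show that $x\mapsto P_x(T_0<\tau_N)$ is non-increasing on $\{0,\dots,N\}$. I would prove this by a reflection coupling. Take walks started at $x<x'$ and couple them by reflecting through the midpoint $m=(x+x')/2$ until they meet or cross. The kernel $q_\alpha(n)$ is symmetric and decreasing in $|n|$, so the walk started at $x'$ has to pass through (or jump over) the region on $x$'s side before reaching $0$. A cleaner alternative is a rearrangement argument: write $G=\int_0^\infty e^{tL}\,dt$ with $e^{tL}$ the limit of $(e^{tA}P_{\Lambda_N})^n$-type Trotter products. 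Here convolution with the symmetric unimodal kernel $q_\alpha$ preserves ``symmetric decreasing'' functions, and so does restriction to the symmetric interval $\Lambda_N$. Starting from $\delta_0$, every iterate stays symmetric-decreasing, and hence so does the Green's function. I would present the Trotter/rearrangement version, since it avoids the subtlety of jumps across the midpoint.

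\emph{Item (3), upper bounds.} First, by monotonicity, $G_{N,\alpha}(0,x)\le G_{N,\alpha}(0,0)$. Second, $G_{N,\alpha}(0,0)=c_\alpha^{-1}\mathbf E_0[\#\text{visits to }0\text{ before }\tau_N]$, and this is bounded by the Green's function of the killed walk. I would then treat the regimes separately.
\begin{itemize}
\item For $\alpha\in(1,2)$, the walk is transient, and the bound $G(0,x)\le G_{\Z}(0,x)\asymp |x|_+^{\alpha-2}$ follows from the local limit theorem for stable walks of index $\alpha-1<1$.
\item For $\alpha\ge 2$, the walk is recurrent, and I would bound $G_{N,\alpha}(0,0)$ via the expected exit time together with the local limit theorem. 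In time $t$, a walk in the domain of attraction of an index-$s=\min(\alpha-1,2)$ stable law has $p_t(0,0)\asymp t^{-1/s}$ (with a $\sqrt{t\ln t}$ scaling when $\alpha=3$). The exit time from $\Lambda_N$ is of order $N^{s}$ (respectively $N^2/\ln N$ when $\alpha=3$). Integrating $p_t(0,0)$ up to this exit time gives $N^{s-1}$, that is $N^{\alpha-2}$, $N/\ln N$, or $N$ respectively. Making this rigorous needs a uniform bound $p^{\Lambda_N}_t(0,0)\le p_t(0,0)\le Ct^{-1/s}$ (the killed walk is dominated by the free one), together with exponential decay of $P_0(\tau_N>t)$ for $t\gg N^s$. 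The latter I would obtain from a uniform lower bound on the probability of exiting within time $N^s$ from any start, iterated via the Markov property.
\item For $\alpha=2$, the same integration gives $\ln N$ at $x=0$. The refined bound $1+\ln(N/|x|_+)$ comes from the last-exit decomposition $G(0,x)=G(x,x)P_x(T_0<\tau_N)$ combined with the potential kernel $a(x)\asymp\ln|x|$ of the Cauchy-type walk: one gets $G(0,x)\approx a$-differences of order $\ln N-\ln|x|$.
\end{itemize}

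The main obstacle is the recurrent regimes, especially the logarithmic corrections at $\alpha=2$ and $\alpha=3$. There I would rely on Spitzer's potential-kernel asymptotics and on the heavy-tail exit estimates rather than prove them from scratch.
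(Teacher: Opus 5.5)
Your overall architecture matches the paper's, but two steps fail as written.

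\emph{Monotonicity.} The step ``convolution with the symmetric unimodal kernel $q_\alpha$ preserves symmetric-decreasing functions'' rests on a false premise. Since $q_\alpha(0)=0<q_\alpha(1)$, the kernel $q_\alpha$ is not non-increasing on $\N$. Already $\delta_0*q_\alpha=q_\alpha$ fails to be monotone, so your induction cannot start; the same failure occurs term by term in the discrete-time occupation sum. Putting $e^{tA}$ into a Trotter product does not remove the problem, because you would then need unimodality of the free heat kernel $e^{tA}(0,\cdot)$, which is the same issue. The missing idea is laziness: write $e^{tL_{N,\alpha}}=e^{-(c_\alpha+1)t}\sum_{n\ge0}\frac{t^n}{n!}Q^n$ with $Q=(c_\alpha+1)I+L_{N,\alpha}$. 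On functions vanishing off $\Lambda_N$, $Q$ acts as convolution with $r(0)=1$, $r(n)=|n|^{-\alpha}$, followed by restriction to $\Lambda_N$. This kernel $r$ \emph{is} even and non-increasing on $\N$. The convolution-plus-restriction induction then shows that each $Q^n\delta_0$ is even and non-increasing on $\N$, hence so are $p^{\Lambda_N}_t(0,\cdot)$ and $G_{N,\alpha}(0,\cdot)$. This is exactly the paper's proof, and no Trotter formula is needed since the killed generator is a finite matrix.

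\emph{Upper bounds.} At $x=0$ with $\alpha\ge2$ your plan works. It is the probabilistic twin of the paper's argument, which uses the spectral gap $\lambda^{(1)}_{N,\alpha}\ge cN^{1-\alpha}$ (resp.\ $c\ln N/N^2$) via $p^{\Lambda_N}_{2t}(0,0)\le e^{-\lambda^{(1)}_{N,\alpha}t}p^{\Lambda_N}_t(0,0)$. You must, however, couple your two inputs through the Markov property, e.g.\ $G_{N,\alpha}(0,0)\le\int_0^T p_t(0,0)\,dt+\mathbf P_0(\tau_N>T)\,G_{N,\alpha}(0,0)$ with $T=KN^{s}$ (resp.\ $KN^2/\ln N$). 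A constant-probability exit bound then suffices. At $\alpha=3$ that bound must come from the diffusive spread of the truncated walk, not from a single big jump.

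The gaps are off the diagonal. For $\alpha\in(1,2)$, the local limit theorem controls $p_n(0,x)$ only up to $o(n^{-1/s})$ uniformly in $x$. Since $\sum_n n^{-1/s}<\infty$, this bounds $G_{\mathbb Z}(0,x)$ but yields no decay in $|x|$. You need an off-diagonal estimate such as $p_t(0,x)\le C(1+t)|x|_+^{-\alpha}$ for $t<|x|_+^{\alpha-1}$ (Bass--Levin, which the paper simply integrates).

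For $\alpha=2$, the potential-kernel step fails as stated. Two-sided bounds $c_1\ln|x|\le a(x)\le c_2\ln|x|$ only give $G(0,x)\lesssim c_2\ln N-c_1\ln|x|$, which is of order $\ln N$, not $O(1)$, when $|x|\asymp N$. You would need all of the following:
\begin{itemize}
\item the sharp asymptotic $a(x)=\kappa\ln|x|+O(1)$ with a single constant $\kappa$;
\item an exit identity of the form $G_{\Lambda_N}(x,0)=\mathbf E_x[a(X_{\tau_N})]-a(x)$;
\item an overshoot bound on $\mathbf E_x[\ln(|X_{\tau_N}|/N)]$.
\end{itemize}
Also, your decomposition should read $G(0,x)=G(0,0)\,\mathbf P_x(T_0<\tau_N)$.

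The paper's route is much lighter. It integrates $p_t(0,x)\le C(1+t)/|x|_+^2$ on $[0,|x|_+]$ and $p_t(0,x)\le C/t$ on $[|x|_+,N]$, which gives $C+\ln(N/|x|_+)$. For $t\ge N$ it uses the monotonicity of item (2): $p^{\Lambda_N}_{2t}(0,x)\le p^{\Lambda_N}_{2t}(0,0)\le Ce^{-\lambda^{(1)}_{N,2}t}/t$ with $\lambda^{(1)}_{N,2}N\ge c$.
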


\begin{remark}
    We mention that, although this will not be done in this article, it would be possible to prove lower bounds on the Green's function matching the upper bound stated above up to a multiplicative constant for the vertices in the bulk of the discrete interval $\{ -N , \ldots, N\}$. 
\end{remark}

\subsection{The real-valued Gaussian model} \label{sec:sectionrealvaluedgaussian}
This section is devoted to the real-valued Gaussian model defined in~\eqref{eq:defreal-valuedmodel}. As mentioned above, this model is a multivariate normal distribution with mean and covariance matrix explicitly given by
\begin{equation} \label{eq:Gaussianidentity}
    \mu_{N, \alpha, \lambda, \beta}^{\eta, \R}
    =
    \mathcal N\left(
        \lambda G_{N,\alpha}\eta,\,
        \frac1\beta G_{N,\alpha}
    \right) ~~\mbox{with}~~ (G_{N,\alpha}\eta)(x)
    :=
    \sum_{y\in\Lambda_N}G_{N,\alpha}(x,y)\eta_y.
\end{equation}
When $\beta$ goes to infinity, the measure $\mu_{N, \alpha, \lambda, \beta}^{\eta, \R}$ concentrates around its mean $\lambda G_{N, \alpha} \eta$; this corresponds to the real-valued ground state, which is the configuration in $\Omega^\R_N$ that minimises the Hamiltonian.
Since the random variables $(\eta_x)_{x \in \mathbb Z}$ are assumed to be independent, centered and of variance $1$, we have
\begin{equation} \label{eq:sumandvarianceiid}
\mathbb{E}\left[ G_{N , \alpha}\eta(0) \right] = 0 ~~\mbox{and}~~\mathbb{E}\left[  G_{N , \alpha}\eta(0)^2 \right] = \sum_{x \in \Lambda_N}  G_{N , \alpha}(0, x)^2.
\end{equation}
These observations combined with the convergences~\eqref{eq:asympexpGSsketch} and~\eqref{eq:CLTsketch} and the upper bounds stated in Proposition~\ref{prop:propgreensfunction} are enough to prove Theorems~\ref{thm:deloc} and~\ref{thm:thm1.4TCL} in the case of the real-valued model.

The discrete model does not satisfy such an exact Gaussian identity~\eqref{eq:Gaussianidentity}. This makes the analysis more complicated and a substantial part of the proof is devoted to showing that
$$\bbE\left[ E_N^\eta\left[(\phi(0) - \lambda G_{N , \alpha}\eta(0))^2\right]\right] \ll \sum_{x \in \Lambda_N}  G_{N , \alpha}(0, x)^2$$
in order to show that the ground state  $\lambda G_{N , \alpha}\eta$ and the random variable $\phi(0)$ (with $\phi$ sampled according to the annealed distribution $ \mathbb P_{N,\alpha,\lambda, \beta}^{\mathrm{ann}}$) have the same asymptotic behaviour.

\subsection{Entropy and Boltzmann--Gibbs variational principle} \label{sec:sectionBoltzmannGibbs}
As already mentioned in the introduction, the Boltzmann--Gibbs variational principle will be a crucial tool of the proofs. Before stating the principle, which is nothing but the mathematical version of the $2^{\text{nd}}$ law of thermodynamics,
let us recall a few facts about the entropy of random variables.\\

Let $\nu$ be a probability distribution on a discrete set $\Omega$. The entropy of $\nu$, or equivalently of a random variable of law $\nu$, is defined by
\begin{equation*}
    \Ent(\nu):= - \sum_{\omega \in \Omega} \nu(\omega) \ln \nu(\omega)\quad \text{(with the convention } 0 \ln 0 = 0).
\end{equation*}
(N.B. here, we choose to define the entropy as a positive quantity).

Now, recall the definition of the conditional entropy. For $(X,Y)$ a couple of random variables, for any $y$ in the support of $Y$, $\Ent(X \mid Y=y)$ denotes the entropy of the conditional distribution of $X$ given $Y=y$, that is
$$\Ent(X \mid Y=y) :=-\sum_x \bfP(X=x \mid Y=y) \ln \bfP(X=x \mid Y=y)$$
($\bfP$ being the law of the couple).
The conditional entropy, of $X$ given $Y$, is then defined as the expectation over all $y$ of the above quantity:
$$\Ent(X \mid Y):= \sum_y \bfP(Y=y) \Ent(X \mid Y=y) =- \sum_{x,y} \bfP(X=x,Y=y)\ln\bfP(X=x \mid Y=y).$$
A simple computation shows that 
\begin{equation*}
    \Ent(X)-\Ent(Y) = \Ent(X \mid Y) - \Ent(Y\mid X),
\end{equation*}
so that the difference of entropy between two distributions is given by the difference of conditional entropy, for any coupling of the two laws.
It is also easy to see that if $Y$ is a measurable function of $X$, then $\Ent(Y \mid X)=0$, so $\Ent(X) \geq \Ent(Y)$. The application of a measurable function can therefore only decrease the entropy.\\

We now state the Boltzmann--Gibbs variational principle. Although it applies to a wide class of statistical mechanics models, here we only state it in the context of our Gibbs measure $\Gib$ on $\Omega_N$.
In finite volume, it says that the Gibbs measure is the (unique) maximiser amongst all probability distributions on $\Omega_N$ of the quantity $\Ent(\nu)- \beta \nu[ \mathcal{H}_N^\eta]$, where $\nu[ \cdot]$ denotes the expectation with respect to $\nu$. In a practical way, we have for every probability measure $\nu$ on $\Omega_N$,
\begin{equation}\label{eq:BGVP}\tag{BGVP}
    \Ent(\nu)-\beta \nu[\mathcal{H}_N^\eta] \leq \Ent(\Gib)-\beta \E[\mathcal{H}_N^\eta].
\end{equation}

\subsection{Capacity of a subset of $\mathbb Z$} \label{sec:sec2.5}
In this short section, we introduce the capacity of a subset of $\mathbb Z$, defined as the total interaction between the subset and its complement.
For every finite subset $B \Subset \mathbb Z$, let
\begin{equation*}
    \Cap(B):= \sum_{x \in B} \sum_{y \notin B} |x-y|^{-\alpha}.
\end{equation*}
For future purpose, observe that for all $B \subset \Lambda_N$, we can rewrite this as
$$\Cap(B) = 2\mathcal{H}^0_{N, \alpha}(\mathbf{1}_B) = \langle \mathbf{1}_B, - L_{N, \alpha}\mathbf{1}_B \rangle.$$
In potential theory, the capacity of a set $A$ relative to a larger set $B$ is classically defined as the minimal Dirichlet energy of a function that equals 1 on $A$ and vanishes outside $B$. The capacity of a set $B$ is then often defined as the capacity of $B$ relative to the full space, but here we consider the capacity of $B$ relative to itself.
Similarly, in a more probabilistic setting, the capacity of $A$ relative to $B$ is the sum over $x \in A$ of the probability that a random walk starting at $x$ leaves $B$ before returning in $A$; our definition of $\Cap(B)$ matches up to a multiplicative factor $c_\alpha$ this probabilistic interpretation of the capacity of $B$ relative to itself.

In several places in the proofs, we will need the following upper bound on the capacity of an interval: there exists a constant $C$ depending only on $\alpha$ such that for any interval $I \Subset \mathbb Z$,
\begin{equation}\label{eq:bound_capacity}
    \Cap(I) \leq C \begin{cases}
        |I|^{2- \alpha} & \text{ if } \alpha \in(1,2)\\
        \ln |I|_+ & \text{ if } \alpha=2 \\
        1 & \text{ if } \alpha>2
    \end{cases}
\end{equation}
where $|I|$ denotes the length of $|I|$. The proof is a simple computation. Indeed, by translation-invariance, we can identify $I$ with $\{1, \dots, |I|\}$ and we have
$$\Cap(\{1, \dots, |I|\})=2\sum_{x=1}^{|I|} \sum_{n \geq x} n^{-\alpha} \leq C \sum_{x=1}^{|I|} x^{1-\alpha}$$
which yields \eqref{eq:bound_capacity}.

\section{Qualitative delocalisation for $\alpha>3/2$}
\label{sec:qualitative-delocalisation}

In this section we prove Theorem~\ref{thm:qualitative-delocalisation}. We fix throughout the section
\[
    \alpha\in(3/2,\infty],\qquad \lambda>0,\qquad \beta>0,
\]
and recall the definition of the set of admissible height functions $\Omega_\alpha$ introduced in~\eqref{def:defOmega}.
We first introduce two definitions: the finite-volume specification (following~\cite{georgii}) and the shift-covariant infinite-volume Gibbs measure.

\begin{definition}[Finite-volume specification] \label{def:3.1}
For $\zeta \in \Omega_\alpha$, the finite-volume Hamiltonian in \(\Lambda \subseteq \bbZ \) with boundary condition \(\zeta\) is
\begin{multline*}
    \mathcal H^\eta_{\Lambda,\alpha,\lambda}(\xi\mid\zeta)
    :=
    \frac12
    \sum_{\substack{\{x,y\}\subset\Lambda\\x\neq y}}
    |x - y|^{-\alpha}\bigl(\xi(x)-\xi(y)\bigr)^2
    \\
    +
    \frac12
    \sum_{x\in\Lambda}\sum_{y\notin\Lambda}
    |x - y|^{-\alpha} \left[ \bigl(\xi(x)-\zeta(y)\bigr)^2 - \zeta(y)^2 \right]
    -
    \lambda\sum_{x\in\Lambda}\eta_x\xi(x).
\end{multline*}
We then define the finite-volume specification to be the probability kernel $(\gamma_\Lambda^\eta)_\Lambda$ with
\[
    \gamma^\eta_{\Lambda,\alpha,\lambda,\beta}(\xi\mid\zeta)
    :=
    \frac{1}{Z^\eta_{\Lambda,\alpha,\lambda,\beta}(\zeta)}
    \exp\bigl(-\beta \mathcal H^\eta_{\Lambda,\alpha,\lambda}(\xi\mid\zeta)\bigr),
    \qquad \xi\in\mathbb Z^\Lambda,
\]
where
\[
    Z^\eta_{\Lambda,\alpha,\lambda,\beta}(\zeta)
    :=
    \sum_{\xi\in\mathbb Z^\Lambda}
    \exp\bigl(-\beta \mathcal H^\eta_{\Lambda,\alpha,\lambda}(\xi\mid\zeta)\bigr).
\]
\end{definition}

\begin{remark} \label{remark3.2}
    In the second sum of the definition of the Hamiltonian $\mathcal H^\eta_{\Lambda,\alpha,\lambda}(\xi\mid\zeta)$, the term $\zeta(y)^2$ was subtracted to ensure that the infinite sum is well-defined (i.e., converges) for any value of $\zeta \in \Omega_\alpha$ (N.B. we use here the assumption that $\zeta \in \Omega_\alpha$ as the sum does not converge for any function $\zeta : \bbZ \to \bbZ$).
\end{remark}

We next introduce the definition of shift-covariant infinite-volume Gibbs measure.

\begin{definition}[Shift-covariant infinite-volume Gibbs measure] \label{def:def3.1}
A measurable map
\[
    \eta\mapsto\mu^\eta\in\mathcal P(\Omega_\alpha)
\]
is called an infinite-volume shift-covariant random Gibbs measure if it satisfies the two following properties:
\begin{itemize}
\item \emph{The Dobrushin-Lanford-Ruelle (DLR) condition:} for \(\mathbb P\)-a.e. \(\eta\), for every \(\Lambda\subseteq\mathbb Z\), and for every bounded local measurable function \(f\),
\[
    \mu^\eta[f]
    =
    \int_{\Omega_\alpha}
    \gamma^\eta_{\Lambda,\alpha,\lambda,\beta}[f\mid\phi]\,\mu^\eta(d\phi).
    \tag{DLR}
\]
Here \(\gamma^\eta_{\Lambda,\alpha,\lambda,\beta}[f\mid\phi]\) is the expectation of $f$ with respect to $\gamma^\eta_{\Lambda,\alpha,\lambda,\beta}(\cdot \mid\phi)$.
\item \emph{Shift-covariance:} for every \(a\in\mathbb Z\),
\[
    \mu^{\theta_a\eta}=\mu^\eta\circ\tau_a^{-1}
    \qquad\text{for \(\mathbb P\)-a.e. \(\eta\)}
\]
with
\[
    (\theta_a\eta)_x:=\eta_{x+a},
    \qquad
    (\tau_a\phi)(x):=\phi(x+a).
\]
\end{itemize}
\end{definition}
For \(B\Subset\mathbb Z\), define
\[
    S_B(\eta):=\sum_{x\in B}\eta_x
\]
and, for any $\phi \in \Omega_\alpha$,
\[
    \Phi_B(\phi):=
    \sum_{x\in B}\sum_{y\notin B}
    |x - y|^{-\alpha}\bigl(\phi(y)-\phi(x)\bigr).
\]
\begin{lemma}[Integrability of \(\Phi_B\)]
\label{lem:PhiB_Integrable}
Let \(\eta\mapsto\mu^\eta\) be shift-covariant and assume that
\[
    M:=\mathbb E\bigl[\mu^\eta[|\phi(0)|]\bigr]<\infty.
\]
Then, for every \(B\Subset\mathbb Z\),
\[
    \mathbb E\bigl[\mu^\eta[|\Phi_B|]\bigr]
    \le
    2M\operatorname{Cap}_\alpha(B).
\]
In particular, \(\Phi_B\) is integrable under the annealed measure \(\mathbb P(d\eta)\mu^\eta(d\phi)\), and \(\mu^\eta[\Phi_B]\) is well-defined for \(\mathbb P\)-a.e. \(\eta\).
\end{lemma}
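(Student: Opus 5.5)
The plan is to bound $|\Phi_B|$ pointwise by a weighted sum of $|\phi(x)|$ and $|\phi(y)|$, and then use shift-covariance to reduce every annealed moment $\mathbb E[\mu^\eta[|\phi(z)|]]$ to $M$. The triangle inequality gives, for every $\phi\in\Omega_\alpha$,
\[
    |\Phi_B(\phi)| \le \sum_{x\in B}\sum_{y\notin B}|x-y|^{-\alpha}\bigl(|\phi(y)|+|\phi(x)|\bigr),
\]
where the right-hand side may a priori be infinite. To exchange the expectation with the double sum I would use Tonelli's theorem, which applies because all terms are nonnegative. This yields
\[
    \mathbb E\bigl[\mu^\eta[|\Phi_B|]\bigr] \le \sum_{x\in B}\sum_{y\notin B}|x-y|^{-\alpha}\Bigl(\mathbb E\bigl[\mu^\eta[|\phi(y)|]\bigr]+\mathbb E\bigl[\mu^\eta[|\phi(x)|]\bigr]\Bigr).
\]

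The key step is to show that $\mathbb E[\mu^\eta[|\phi(z)|]]=M$ for every $z\in\mathbb Z$. By shift-covariance, $\mu^{\theta_z\eta}=\mu^\eta\circ\tau_z^{-1}$ for $\mathbb P$-a.e. $\eta$. Hence
\[
    \mu^{\theta_z\eta}[|\phi(0)|]=\mu^\eta[|(\tau_z\phi)(0)|]=\mu^\eta[|\phi(z)|].
\]
Since $\eta$ is i.i.d., the law $\mathbb P$ is invariant under $\theta_z$. Taking expectations therefore gives $\mathbb E[\mu^\eta[|\phi(z)|]]=\mathbb E[\mu^{\theta_z\eta}[|\phi(0)|]]=M$.

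I expect the only real subtlety to be a measure-theoretic one. One must check that $\eta\mapsto\mu^\eta[|\phi(z)|]$ is measurable, which follows from measurability of $\eta\mapsto\mu^\eta$ and monotone approximation by bounded local functions. One must also check that the a.e. identity is enough under the $\theta_z$-invariant measure $\mathbb P$. If the paper allows non-identically distributed disorder, one would need stationarity of $\mathbb P$, but here $\eta$ is i.i.d., so this holds.

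Substituting $M$ for every expectation, the bound becomes $2M\sum_{x\in B}\sum_{y\notin B}|x-y|^{-\alpha}=2M\operatorname{Cap}_\alpha(B)$. This is finite because $B$ is finite and $\alpha>1$, which proves the inequality. It follows that $\mu^\eta[|\Phi_B|]<\infty$ for $\mathbb P$-a.e. $\eta$, so $\mu^\eta[\Phi_B]$ is well defined and $\Phi_B$ is integrable under the annealed measure. Note that $\Phi_B$ itself is finite on $\Omega_\alpha$, since $\sum_{y}|x-y|^{-\alpha}|\phi(y)|\le C_x\sum_y |y|_+^{-\alpha}|\phi(y)|$ for $x$ in the finite set $B$.
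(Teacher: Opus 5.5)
Your proposal is correct and follows essentially the same route as the paper's proof: the triangle inequality with Tonelli, then shift-covariance together with the $\theta_z$-invariance of $\mathbb P$ to get $\mathbb E[\mu^\eta[|\phi(z)|]]=M$ for every $z$, summed to $2M\operatorname{Cap}_\alpha(B)$. Your extra remarks on the measurability of $\eta\mapsto\mu^\eta[|\phi(z)|]$ and on the pointwise finiteness of $\Phi_B$ on $\Omega_\alpha$ fill in details that the paper leaves implicit.
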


\begin{proof}
By shift-covariance and translation-invariance of the disorder law \(\mathbb P\), for every \(z\in\mathbb Z\),
\[
    \mathbb E\bigl[\mu^\eta[|\phi(z)|]\bigr]
    =
    \mathbb E\bigl[\mu^\eta[|\phi(0)|]\bigr]
    =
    M.
\]
Indeed, using shift-covariance with \(a=z\),
\[
    \mu^\eta[|\phi(z)|]
    =
    \mu^{\theta_z\eta}[|\phi(0)|],
\]
and the law of \(\theta_z\eta\) under \(\mathbb P\) is again \(\mathbb P\).

Therefore, by Tonelli's theorem and the triangle inequality,
\begin{align*}
    \mathbb E\bigl[\mu^\eta[|\Phi_B|]\bigr]
    &\le
    \sum_{x\in B}\sum_{y\notin B}
    |x - y|^{-\alpha}
    \mathbb E\bigl[\mu^\eta[|\phi(y)-\phi(x)|]\bigr]
    \\
    &\le
    \sum_{x\in B}\sum_{y\notin B}
    |x - y|^{-\alpha}
    \left(
        \mathbb E\bigl[\mu^\eta[|\phi(y)|]\bigr]
        +
        \mathbb E\bigl[\mu^\eta[|\phi(x)|]\bigr]
    \right)
    \\
    &=
    2M\operatorname{Cap}_\alpha(B).
\end{align*}
This proves the lemma.
\end{proof}

\begin{lemma}
\label{lem:balance-estimate}
Let \(\eta\mapsto\mu^\eta\) be an infinite-volume random Gibbs measure which is shift-covariant and satisfies
\[
    \mathbb E\bigl[\mu^\eta[|\phi(0)|]\bigr]<\infty.
\]
Then, for every \(B\Subset\mathbb Z\),
\begin{equation*}
    \bigl|\lambda S_B(\eta)+\mu^\eta[\Phi_B]\bigr|
    \le
    \frac12\operatorname{Cap}_\alpha(B)
    \qquad\text{for \(\mathbb P\)-a.e. \(\eta\).}
\end{equation*}
\end{lemma}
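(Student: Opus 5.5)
Take $\Lambda = B$ in the DLR condition. The plan is to compare, inside the finite-volume specification $\gamma^\eta_B(\cdot\mid\zeta)$, a configuration $\xi$ with its shifts $\xi\pm\mathbf 1_B$.

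\textbf{Step 1: the energy change of a shift.} Fix a boundary condition $\zeta\in\Omega_\alpha$. The interaction inside $B$ does not change under a global shift of $\xi$ on $B$. The boundary terms change as
\[
(\xi(x)+1-\zeta(y))^2-(\xi(x)-\zeta(y))^2 = 2(\xi(x)-\zeta(y))+1 .
\]
This gives
\begin{equation*}
\mathcal H^\eta_{B}(\xi+\mathbf 1_B\mid\zeta)-\mathcal H^\eta_{B}(\xi\mid\zeta)
= -\Phi_B(\xi\vee\zeta)+\tfrac12\operatorname{Cap}_\alpha(B)-\lambda S_B(\eta),
\end{equation*}
where $\xi\vee\zeta$ denotes the configuration equal to $\xi$ on $B$ and to $\zeta$ off $B$. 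The same computation with $-\mathbf 1_B$ gives $+\Phi_B+\tfrac12\operatorname{Cap}_\alpha(B)+\lambda S_B$. Write $D:=\lambda S_B(\eta)+\Phi_B(\xi\vee\zeta)$.

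\textbf{Step 2: a one-sided bound from the change of variables.} The map $\xi\mapsto\xi+\mathbf 1_B$ is a bijection of $\mathbb Z^B$. Hence
\[
1=\sum_\xi\gamma_B(\xi+\mathbf 1_B\mid\zeta)=\gamma_B\bigl[e^{\beta(D-\frac12\operatorname{Cap}_\alpha(B))}\mid\zeta\bigr].
\]
Jensen's inequality then yields
\[
\gamma_B[D\mid\zeta]\le\tfrac12\operatorname{Cap}_\alpha(B).
\]
The shift by $-\mathbf 1_B$ gives symmetrically $\gamma_B[D\mid\zeta]\ge-\tfrac12\operatorname{Cap}_\alpha(B)$.

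\textbf{Step 3: integrating against $\mu^\eta$.} Integrate these bounds in $\zeta$ against $\mu^\eta$ and use the DLR condition, which reconstructs $\mu^\eta$ from $\gamma_B(\cdot\mid\zeta)\,\mu^\eta(d\zeta)$. This gives
\[
\bigl|\lambda S_B+\mu^\eta[\Phi_B]\bigr|\le\tfrac12\operatorname{Cap}_\alpha(B).
\]

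\textbf{The main obstacle: DLR only applies to bounded local functions.} Two difficulties arise.
\begin{itemize}
\item $\Phi_B$ is neither bounded nor local, since it depends on infinitely many $\zeta(y)$.
\item One must also know that $\gamma_B[|\Phi_B|\mid\zeta]<\infty$ for $\mu^\eta$-almost every $\zeta$.
\end{itemize}
The second point is fine. For fixed $\zeta\in\Omega_\alpha$, the specification has Gaussian tails in $\xi$, and $\Phi_B(\xi\vee\zeta)$ is affine in $\xi$ with a finite constant because $\zeta\in\Omega_\alpha$.

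For the first point, apply DLR to the truncations $f_{K,R}:=(\Phi_B^{(R)}\wedge K)\vee(-K)$. Here $\Phi_B^{(R)}$ restricts the sum to $|y|\le R$. These functions are bounded and local, so DLR gives $\mu^\eta[f_{K,R}]=\int\gamma_B[f_{K,R}\mid\zeta]\,\mu^\eta(d\zeta)$. Let $R\to\infty$ and then $K\to\infty$, using dominated convergence on both sides. On the $\mu^\eta$ side, $|f_{K,R}|$ is dominated by the tail-summed quantity
\[
\sum_{x\in B}\sum_{y\notin B}|x-y|^{-\alpha}\bigl(|\phi(x)|+|\phi(y)|\bigr),
\]
which is $\mu^\eta$-integrable for $\mathbb P$-almost every $\eta$ by Lemma~\ref{lem:PhiB_Integrable}. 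The same integrable majorant, composed with $\gamma_B$, works on the other side: by DLR applied to monotone truncations (monotone convergence), its $\gamma_B$-average integrates to its $\mu^\eta$-mean. This justifies $\int\gamma_B[\Phi_B\mid\zeta]\,\mu^\eta(d\zeta)=\mu^\eta[\Phi_B]$.

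The "for $\mathbb P$-almost every $\eta$" quantifier is handled as follows. Use countably many sets $B$, and note that $\mathbb E\,\mu^\eta[|\Phi_B|]<\infty$ implies $\mu^\eta[|\Phi_B|]<\infty$ almost surely.
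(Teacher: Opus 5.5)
Your proposal is correct and follows the paper's proof essentially step for step: the same energy identity for the block shifts $\xi\mapsto\xi\pm\mathbf{1}_B$, the same bijection-plus-Jensen bound $\bigl|\lambda S_B(\eta)+\gamma^\eta_{B,\alpha,\lambda,\beta}[\Phi_B\mid\zeta]\bigr|\le\frac12\operatorname{Cap}_\alpha(B)$, and the same integration against $\mu^\eta$ via DLR. Your truncation argument (the $f_{K,R}$, with the majorant $\sum_{x\in B}\sum_{y\notin B}|x-y|^{-\alpha}(|\phi(x)|+|\phi(y)|)$, whose integrability is exactly what the Tonelli computation in the proof of Lemma~\ref{lem:PhiB_Integrable} gives) only makes rigorous the extension of DLR to the unbounded, non-local $\Phi_B$, which the paper asserts in one line.
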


\begin{proof}
Fix \(B\Subset\mathbb Z\), a disorder realisation \(\eta\), and an exterior configuration \(\zeta\in \Omega_\alpha\). Let \(\xi\in\mathbb Z^B\), and write \(\xi\zeta\) for the full configuration equal to \(\xi\) on \(B\) and to \(\zeta\) on \(B^c\).

Since adding a constant to every coordinate in \(B\) leaves the internal quadratic energy unchanged, we have
\begin{align}
    \mathcal H^\eta_{B,\alpha,\lambda}(\xi+\mathbf{1}_B\mid\zeta)
    -
    \mathcal H^\eta_{B,\alpha,\lambda}(\xi\mid\zeta)
    &=
    -\Phi_B(\xi\zeta)
    +
    \frac12\operatorname{Cap}_\alpha(B)
    -
    \lambda S_B(\eta),
    \label{eq:block-shift-plus}
    \\
    \mathcal H^\eta_{B,\alpha,\lambda}(\xi-\mathbf{1}_B\mid\zeta)
    -
    \mathcal H^\eta_{B,\alpha,\lambda}(\xi\mid\zeta)
    &=
    \Phi_B(\xi\zeta)
    +
    \frac12\operatorname{Cap}_\alpha(B)
    +
    \lambda S_B(\eta).
    \label{eq:block-shift-minus}
\end{align}
Indeed, for the \(+\mathbf{1}_B\) shift, the boundary energy changes by
\begin{align*}
    &\frac12
    \sum_{x\in B}\sum_{y\notin B}
    |x - y|^{-\alpha}
    \left(
        (\xi(x)+1-\zeta(y))^2
        -
        (\xi(x)-\zeta(y))^2
    \right)
    \\
    &\qquad =
    \sum_{x\in B}\sum_{y\notin B}
    |x - y|^{-\alpha}\bigl(\xi(x)-\zeta(y)\bigr)
    +
    \frac12\operatorname{Cap}_\alpha(B)
    \\
    &\qquad =
    -\Phi_B(\xi\zeta)
    +
    \frac12\operatorname{Cap}_\alpha(B),
\end{align*}
and the disorder contribution changes by \(-\lambda S_B(\eta)\). This proves \eqref{eq:block-shift-plus}; the proof of \eqref{eq:block-shift-minus} is identical.

Since \(\xi\mapsto \xi+\mathbf{1}_B\) is a bijection of \(\mathbb Z^B\),
\[
    \gamma^\eta_{B,\alpha,\lambda,\beta}
    \left[
        \exp\left(
            -\beta\left(
                \mathcal H^\eta_{B,\alpha,\lambda}(\xi+\mathbf{1}_B\mid\zeta)
                -
                \mathcal H^\eta_{B,\alpha,\lambda}(\xi\mid\zeta)
            \right)
        \right)
        \,\middle|\,\zeta
    \right]
    =
    1.
\]
By Jensen's inequality,
\[
    \gamma^\eta_{B,\alpha,\lambda,\beta}
    \left[
        \mathcal H^\eta_{B,\alpha,\lambda}(\xi+\mathbf{1}_B\mid\zeta)
        -
        \mathcal H^\eta_{B,\alpha,\lambda}(\xi\mid\zeta)
        \,\middle|\,\zeta
    \right]
    \ge 0.
\]
Using \eqref{eq:block-shift-plus}, this gives
\[
    \lambda S_B(\eta)
    +
    \gamma^\eta_{B,\alpha,\lambda,\beta}[\Phi_B\mid\zeta]
    \le
    \frac12\operatorname{Cap}_\alpha(B).
\]
Repeating the same argument with the bijection \(\xi\mapsto\xi-\mathbf{1}_B\), and using \eqref{eq:block-shift-minus}, gives
\[
    \lambda S_B(\eta)
    +
    \gamma^\eta_{B,\alpha,\lambda,\beta}[\Phi_B\mid\zeta]
    \ge
    -\frac12\operatorname{Cap}_\alpha(B).
\]
Thus
\begin{equation}\label{eq:block-shift-conditional}
    \left|
        \lambda S_B(\eta)
        +
        \gamma^\eta_{B,\alpha,\lambda,\beta}[\Phi_B\mid\zeta]
    \right|
    \le
    \frac12\operatorname{Cap}_\alpha(B).
\end{equation}
By Lemma~\ref{lem:PhiB_Integrable},
$\Phi_B$ is integrable under $\mathbb P(d\eta)\mu^\eta(\cdot)$, hence for
$\mathbb P$-a.e. $\eta$ it is $\mu^\eta$-integrable. The DLR identity yields
\[
    \int_{\mathbb{Z}^{\mathbb{Z}}}
    \gamma^\eta_{B,\alpha,\lambda,\beta}[\Phi_B\mid\zeta]
    \,\mu^\eta(d\zeta)
    =
    \mu^\eta[\Phi_B]
    \qquad\text{for \(\mathbb P\)-a.e. \(\eta\)}.
\]
Integrating \eqref{eq:block-shift-conditional} with respect to $\mu^\eta$ gives
\[
    \bigl|\lambda S_B(\eta)+\mu^\eta[\Phi_B]\bigr|
    \le
    \frac12\operatorname{Cap}_\alpha(B),
\]
which proves the lemma.
\end{proof}

\begin{proof}[Proof of Theorem~\ref{thm:qualitative-delocalisation}]
Assume, for contradiction, that there exists an infinite-volume random Gibbs measure \(\eta\mapsto\mu^\eta\) satisfying shift-covariance and
\[
    M:=\mathbb E\bigl[\mu^\eta[|\phi(0)|]\bigr]<\infty.
\]
For \(N\ge1\), set
\[
    I_N:=\{1,\ldots,N\}.
\]
By Lemma~\ref{lem:balance-estimate},
\[
    \bigl|\lambda S_{I_N}(\eta)+\mu^\eta[\Phi_{I_N}]\bigr|
    \le
    \frac12\operatorname{Cap}_\alpha(I_N)
    \qquad\text{for \(\mathbb P\)-a.e. \(\eta\).}
\]
Hence
\begin{align*}
    \lambda\mathbb E[|S_{I_N}|]
    &\le
    \mathbb E\left[
        \bigl|
            \lambda S_{I_N}+\mu^\eta[\Phi_{I_N}]
        \bigr|
    \right]
    +
    \mathbb E\left[
        \bigl|\mu^\eta[\Phi_{I_N}]\bigr|
    \right]
    \\
    &\le
    \frac12\operatorname{Cap}_\alpha(I_N)
    +
    \mathbb E\bigl[\mu^\eta[|\Phi_{I_N}|]\bigr]
    \\
    &\le
    \left(\frac12+2M\right)\operatorname{Cap}_\alpha(I_N),
\end{align*}
where the last inequality follows from Lemma~\ref{lem:PhiB_Integrable}.

For every \(\alpha\in(3/2,\infty]\), the capacity estimates \eqref{eq:bound_capacity} yield
\begin{equation}\label{eq:esperance_S_petito}
    \lambda \bbE[|S_{I_N}|]=o(\sqrt N).
\end{equation}
On the other hand, since the disorder variables are i.i.d. with expectation $0$ and variance $1$, the central limit theorem gives
\[
    \frac{S_{I_N}}{\sqrt N}
    \xrightarrow[N\to\infty]{\mathcal{L}}
    \mathcal N(0,1).
\]
Moreover,
\[
    \mathbb E\left[\left(\frac{S_{I_N}}{\sqrt N}\right)^2\right]=1
    \qquad\text{for every }N\ge1.
\]
Thus the family \(\{|S_{I_N}|/\sqrt N:N\ge1\}\) is uniformly integrable, and consequently
\begin{equation*}
    \frac{1}{\sqrt N}\mathbb E[|S_{I_N}|]
    \longrightarrow
    \mathbb E[|Z|]
    =
    \sqrt{\frac2\pi},
    \qquad Z\sim\mathcal N(0,1).
\end{equation*}
This convergence contradicts \eqref{eq:esperance_S_petito}, which completes the proof.
\end{proof}

\section{Scaling limit of the real-valued ground state} \label{sec:scalinglimrealvaluedGS}
In this section, we identify the typical height and the scaling limit of the real-valued ground state at the origin.
Recall from \eqref{eq:sumandvarianceiid} that this quantity is of order $\lambda$ times the $\mathbb{L}^2$-norm of the discrete fractional killed Green's function.
We prove the convergence of this (suitably rescaled) $\mathbb{L}^2$-norm toward its continuous counterpart.

\begin{proposition}[Asymptotics for the $\mathbb{L}^2$-norm of the discrete Green's function] \label{prop:convergencesquareGreens}
The following results hold:
\begin{itemize}
\item For any $\alpha \in (3/2 , 3),$ one has the convergence
\begin{equation*}
    \frac{1}{N^{2\alpha -3}}\sum_{x\in \Lambda_N} G_{N , \alpha}(0,x)^2 \underset{N \to \infty}{\longrightarrow} \int_{-1}^{1} G_{\alpha}(x)^2 \, dx.
\end{equation*}
\item For $\alpha = 3$, one has the convergence
\begin{equation*}
    \frac{(\ln N)^2}{N^{3}} \sum_{x\in \Lambda_N} G_{N , 3}(0,x)^2 \underset{N \to \infty}{\longrightarrow} \int_{-1}^{1} G(x)^2 \, dx.
\end{equation*}
\item For any $\alpha \in (3 , \infty]$, one has the convergence
\begin{equation*}
    \frac{1}{N^{3}} \sum_{x\in \Lambda_N} G_{N , \alpha}(0,x)^2 \underset{N \to \infty}{\longrightarrow} \frac{1}{\sigma_\alpha^2} \int_{-1}^{1} G(x)^2 \, dx ~~\mbox{with} ~~\sigma_\alpha := \sum_{k =1}^\infty \frac{1}{|k|^{\alpha - 2}}.
\end{equation*}
\end{itemize}
\end{proposition}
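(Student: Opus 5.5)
The plan is to prove pointwise convergence of a suitably rescaled Green's function on the macroscopic interior, and to upgrade it to convergence of the squared $\ell^2$-sum by dominated convergence. The domination comes from the uniform bounds of Proposition~\ref{prop:propgreensfunction}. Concretely, I write
\begin{equation*}
    \sum_{x\in\Lambda_N} G_{N,\alpha}(0,x)^2 = N \int_{-1}^{1} G_{N,\alpha}\bigl(0,\lfloor N t\rfloor\bigr)^2 \, dt + O\bigl(\max_x G_{N,\alpha}(0,x)^2\bigr),
\end{equation*}
and set $g_N(t) := a_N^{-1} G_{N,\alpha}(0,\lfloor Nt\rfloor)$. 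The normalisation is $a_N = N^{\alpha-2}$ for $\alpha\in(3/2,3)$, $a_N = N/\ln N$ for $\alpha=3$, and $a_N=N$ for $\alpha>3$. With this choice $N a_N^2$ is exactly the normalisation in the statement. The error term is $O(a_N^2)$, which is $o(Na_N^2)$.

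For $\alpha\ge 2$, Proposition~\ref{prop:propgreensfunction}(3) gives $g_N\le C$ uniformly, except at $\alpha=2$, which is not in the range anyway. For $\alpha\in(3/2,2)$, the bound $|x|_+^{\alpha-2}$ gives
\begin{equation*}
    g_N(t)\le C\,|t|^{\alpha-2},
\end{equation*}
and $|t|^{2\alpha-4}$ is integrable on $[-1,1]$ precisely because $\alpha>3/2$. This is where the threshold enters. In all cases dominated convergence reduces the statement to pointwise convergence $g_N(t)\to \bar G(t)$ for a.e. $t\in(-1,1)$. Here the limit $\bar G$ is $G_\alpha$, respectively $G$, respectively $G/\sigma_\alpha$.

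For the pointwise limits I use the random-walk representation of Section~\ref{sec:sec2.2}. Since pointwise values at a single site are delicate, I first prove convergence of spatial averages. Monotonicity and evenness from Proposition~\ref{prop:propgreensfunction}(2) then allow me to pass from averages to pointwise values. The argument is that $g_N$ is even and nonincreasing on $[0,1]$, so convergence of $\int_a^b g_N$ for all $0<a<b<1$ to a continuous limit gives pointwise convergence at every $t\neq 0$, by a sandwich argument.

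The averages $\int_a^b g_N\,dt$ equal $(Na_N)^{-1}$ times the expected occupation time in $[aN,bN]$ of the walk killed on exiting $\Lambda_N$. The three regimes are handled as follows.
\begin{itemize}
\item For $\alpha\in(3/2,3)$ the walk is in the domain of attraction of a symmetric $(\alpha-1)$-stable process. By \cite[Lemma 12.3]{LSSW17}, the rescaled killed occupation measure converges to $\int_a^b G_\alpha$; one must check that the normalising constant of $(-\Delta)^{(\alpha-1)/2}$ matches the jump kernel $|n|^{-\alpha}$.
\item For $\alpha=3$ the walk has variance $\sum n^2 |n|^{-3}$ of logarithmically divergent type. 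The nonstandard invariance principle \cite[Theorem 1]{sepanski1997some} gives Brownian motion under scaling $\sqrt{k\ln k}$. This yields the $\ln N$ correction and the limit $G$.
\item For $\alpha>3$ the walk has finite variance, and the continuous-time generator behaves like $\sigma_\alpha\Delta$, since $\sum_{n\ne0} n^2|n|^{-\alpha}=2\sigma_\alpha$ with the factor $\tfrac12$ from the Taylor expansion. Donsker's theorem together with \cite[Section 23, Theorem 1]{spitzer1976principles} gives the limit $G/\sigma_\alpha$.
\end{itemize}
In each regime, exit times of the scaled walk converge to those of the limit process. This holds because the exit time is a.s. continuous on paths of the limit process, which exits $[-1,1]$ immediately after hitting the boundary.

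The main obstacle is this identification of constants and the convergence of killed occupation measures, especially at $\alpha=3$. There the scaling is $\sqrt{k\ln k}$ in discrete time, and has to be inverted to show that the expected exit time from $\Lambda_N$ started at $0$ is of order $N^2/\ln N$ with the correct constant. I would first handle this through the continuous-time generator. Testing $-L_{N,3}$ against smooth functions $\psi(\cdot/N)$ gives
\begin{equation*}
    -L_{N,3}\psi(\cdot/N)\approx \frac{\ln N}{N^2}\,\bigl(-\psi''\bigr),
\end{equation*}
and I would combine this with a weak-convergence and uniqueness argument for the limiting Dirichlet problem.
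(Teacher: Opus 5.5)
Your proposal is correct, but it takes a different route from the paper at the decisive step. Both arguments rest on the same input: weak convergence of the rescaled occupation measure, $\frac{1}{Na_N}\sum_x h(x/N)G_{N,\alpha}(0,x)\to\int h\,\bar G$. This is the paper's Proposition~\ref{prop:convergenceoccupationmeasure}, proved with exactly the references you cite (\cite{LSSW17}, \cite{sepanski1997some}, \cite{spitzer1976principles}).

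\textbf{How the two routes differ.} The paper upgrades weak convergence to convergence of the $\ell^2$-norm spectrally. It applies Plancherel on $\mathbb{Z}/(2N+1)\mathbb{Z}$ and obtains convergence of each Fourier coefficient by testing against $e^{i\pi k x}$. It then dominates the rescaled coefficients uniformly in $N$ via Proposition~\ref{prop:prop5.3}, which is proved by summation by parts, monotonicity and the bounds of Proposition~\ref{prop:propgreensfunction}. In that route the threshold $\alpha>3/2$ appears as summability of $|k|_+^{2-2\alpha}$.

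You stay in physical space instead. Monotonicity turns convergence of interval averages into pointwise convergence of $g_N$ off the origin. The pointwise bounds of Proposition~\ref{prop:propgreensfunction}(3) then give a spatial dominating function, so $\alpha>3/2$ appears as square-integrability of the pole $|t|^{\alpha-2}$.

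\textbf{What each buys.} Your version avoids Proposition~\ref{prop:prop5.3} entirely. It also gives the stronger statement that the rescaled Green's function converges pointwise away from the origin. The sandwich does not even need continuity of $G_\alpha$ a priori: $\bar G$ is a.e.\ equal to a monotone function on $(0,1)$, hence continuous off a countable set. Passing from continuous $h$ to indicators of intervals is a routine portmanteau step, since $\bar G\,dx$ has no atoms. The paper's version only ever tests against smooth functions and never needs pointwise information.

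\textbf{Points to fix.}
\begin{enumerate}
\item The case $\alpha=2$ does lie in $(3/2,3)$. There $a_N=1$ and $g_N$ is not uniformly bounded. However, the bound $1+\ln(N/|x|_+)$ gives $g_N(t)\le C(1+\ln(1/|t|))$, which is square-integrable, so your argument still goes through.
\item For $\alpha<2$, $\max_x G_{N,\alpha}(0,x)^2=G_{N,\alpha}(0,0)^2$ is $O(1)$, not $O(a_N^2)$. The only term you actually drop is $x=N$, which is $O(a_N^2)$. In any case $O(1)=o(N^{2\alpha-3})$.
\item If you re-derive the $\alpha=3$ occupation-measure convergence yourself, convergence in law of exit times is not enough to get convergence of expected occupation times. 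You also need uniform integrability; the paper uses uniform exponential moments of the rescaled exit time.
\item In your generator route at $\alpha=3$, testing against $\psi\in C_c^\infty((-1,1))$ only yields $-\tilde g''=\delta_0$ in the interior. You need a separate argument that subsequential limits vanish at $\pm1$ before invoking uniqueness of the Dirichlet problem.
\end{enumerate}
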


In other words, Proposition \ref{prop:convergencesquareGreens} states that the first three items of Theorem \ref{thm:deloc} hold for the ground state of the real-valued model.\\

The rest of this section is organised as follows. Section~\ref{section5.1} proves the weak convergence of the discrete Green's functions toward the continuous ones, Section~\ref{section5.2} provides estimates on its Fourier coefficients and
Section~\ref{section5.3} contains the proof of Proposition~\ref{prop:convergencesquareGreens} relying on Fourier analysis and the convergence established in Section~\ref{section5.1}.

For clarity of the presentation, we decided to write a proof covering all the possible cases for the decay exponent $\alpha \in (3/2, \infty]$, but note that, in the regime $\alpha > 3$, the result directly follows from~\cite[Section 23, Theorem 1]{spitzer1976principles} (see~\eqref{eq:unifconvgreensalpha>3} below).

\subsection{Preliminaries: Convergence of the occupation measure} \label{section5.1}

In this section, we state the weak convergence of the discrete (fractional) Green's functions toward their continuous counterpart. In the case of the decay exponent $\alpha < 3$, the result is a direct consequence of the convergence of the occupation measure for discrete symmetric stable processes from~\cite[Lemma 12.3]{LSSW17}.

\begin{proposition}[Convergence of the occupation measure] \label{prop:convergenceoccupationmeasure}
For any continuous function $h \in C([-1,1])$, one has the convergences
\begin{itemize}
\item For any $\alpha \in (3/2 , 3),$
\begin{equation*}
    \frac{1}{N^{\alpha-1}}\sum_{x \in \Lambda_N} h \left( \frac{x}{N} \right) G_{N , \alpha} (0,x) \underset{N \to \infty}{\longrightarrow} \int_{-1}^1  h \left( x \right) G_{\alpha} (x) \, dx 
\end{equation*}
\item For $\alpha = 3,$
\begin{equation*}
    \frac{\ln N}{N^{2}}\sum_{x \in \Lambda_N} h \left( \frac{x}{N} \right) G_{N , 3} (0,x) \underset{N \to \infty}{\longrightarrow} \int_{-1}^1  h \left( x \right) G (x) \, dx 
\end{equation*}
\item For any $\alpha \in (3 , \infty],$
\begin{equation*}
    \frac{1}{N^{2}}\sum_{x \in \Lambda_N} h \left( \frac{x}{N} \right) G_{N , \alpha} (0,x) \underset{N \to \infty}{\longrightarrow}\frac{1}{\sigma_\alpha} \int_{-1}^1  h \left( x \right) G (x) \, dx 
\end{equation*}
\end{itemize}
\end{proposition}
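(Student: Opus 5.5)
The plan is to use the random-walk representation of $G_{N,\alpha}$. With the continuous-time walk $(Y_t)$ of generator $L_{N,\alpha}$ (jump rates $|n|^{-\alpha}$) started at $0$ and killed at the exit time $T_N$ of $\Lambda_N$, we have
\begin{equation*}
\sum_{x\in\Lambda_N} h\left(\tfrac{x}{N}\right) G_{N,\alpha}(0,x) = \mathbf E_0\left[\int_0^{T_N} h\left(\tfrac{Y_t}{N}\right)dt\right].
\end{equation*}
The quantity in the statement is therefore the expected occupation functional of the rescaled walk $t\mapsto Y_{a_N t}/N$ killed on leaving $[-1,1]$. Here the time scale $a_N$ is chosen so that this process converges to a limit $Z$:
\begin{itemize}
\item for $\alpha\in(3/2,3)$, $a_N=N^{\alpha-1}$, and $Z$ is a symmetric $(\alpha-1)$-stable process;
\item for $\alpha=3$, $a_N=N^2/\ln N$, and $Z$ is a Brownian motion (the jump law has variance of order $\ln$);
\item for $\alpha>3$, $a_N=N^2$, and $Z=\sqrt{2\sigma_\alpha}\,B$ with $2\sigma_\alpha=\sum_{n\ne0}n^{2}|n|^{-\alpha}$.
\end{itemize}
After the time change $t=a_N s$, the statement becomes
\begin{equation*}
\mathbf E\left[\int_0^{\tau^N} h\left(Z^N_s\right)ds\right] \longrightarrow \mathbf E\left[\int_0^{\tau} h(Z_s)\,ds\right],
\end{equation*}
where $\tau^N,\tau$ are the exit times of $[-1,1]$. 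The right-hand side equals $\int h\,G_\alpha$ with the normalisation of $(-\Delta)^{(\alpha-1)/2}$, or $\int h G$ for $\alpha=3$. For $\alpha>3$ the diffusion constant $\sigma_\alpha$ rescales the Green's function by $1/\sigma_\alpha$: the generator is $\sigma_\alpha\Delta$. The constant normalisations must be checked against the definition of $(-\Delta)^s$ in \cite{LSSW17}.

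For $\alpha\in(3/2,3)$ I would simply invoke \cite[Lemma 12.3]{LSSW17}, which is precisely this occupation-measure convergence for discrete symmetric stable walks killed outside a domain. For $\alpha=3$ I would use the invariance principle of \cite[Theorem 1]{sepanski1997some}, a functional CLT in the non-normal domain of attraction with $\ln N$ normalisation. For $\alpha>3$, including $\alpha=\infty$, I would use Donsker's theorem. In the two Brownian cases, the passage from process convergence to convergence of $\int_0^{\tau^N} h(Z^N_s)ds$ requires two facts:
\begin{itemize}
\item continuity of the exit time $\tau$ at almost every Brownian path, since Brownian motion immediately enters $(1,\infty)$ after hitting $1$, so the functional is a.s. continuous in the Skorokhod topology;
\item uniform integrability of $\tau^N$.
\end{itemize}
Alternatively, for $\alpha>3$ one can cite the uniform convergence of \cite[Section 23, Theorem 1]{spitzer1976principles}, $N^{-1}G_{N,\alpha}(0,Nu)\to\sigma_\alpha^{-1}G(u)$, and conclude with a Riemann sum.

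The main obstacle is the uniform integrability, i.e. $\sup_N \mathbf E[(\tau^N)^2]<\infty$, together with the fact that overshoots do not spoil the exit time. For uniform integrability I would use the uniform bound $\mathbf P_x(\tau^N>1)\le 1-c$ for all starting points $x\in\Lambda_N$. This follows from the weak convergence itself, because the limiting process exits $[-1,1]$ within time $1$ with probability bounded below, uniformly in the starting point. The Markov property then gives a geometric tail. Equivalently, Proposition~\ref{prop:propgreensfunction}(3) gives $\sum_x G_{N,\alpha}(0,x)\le C a_N$ uniformly, which directly supplies the needed bound on expectations. The delicate case is $\alpha=3$, where the $\ln N$ normalisation makes the tightness and jump-size control less standard; I would truncate jumps at size $\varepsilon N$. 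The large jumps occur at rate $O(a_N N^{-2})\to0$ per unit rescaled time, and the truncated walk satisfies Lindeberg's condition.
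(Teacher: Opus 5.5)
Your proposal is correct and is essentially the paper's proof: \cite[Lemma 12.3]{LSSW17} for $\alpha\in(3/2,3)$; Sepanski's invariance principle plus a.s.\ continuity of the Brownian exit time and uniform integrability of the rescaled exit times for $\alpha=3$; and Spitzer's uniform convergence for $\alpha>3$ (your Donsker alternative is the same argument as for $\alpha=3$). Working with the continuous-time walk only adds a harmless Poisson time change. Your main uniform-integrability argument is actually cleaner than the paper's: $\mathbf P_x(\tau^N>1)\le 1-c$ holds by translation invariance and $\mathbf P_0(|Z^N_1|>2)\to\mathbf P(|Z_1|>2)>0$. However, the ``equivalent'' bound $\sum_x G_{N,\alpha}(0,x)\le Ca_N$ only controls $\mathbf E_0[\tau^N]$. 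It yields uniform integrability only once made uniform in the starting point (e.g.\ via $\Lambda_N-x\subseteq\Lambda_{2N}$) and combined with the Markov property to bound $\mathbf E[(\tau^N)^2]$.
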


\begin{proof}
    In the case of the decay exponent $\alpha \in (3/2 , 3)$, the result is the one of~\cite[Lemma 12.3]{LSSW17} (N.B. The convergence proved there holds for any decay exponent $\alpha \in (1 , 3)$).\\

    In the case of the decay exponent $\alpha = 3$, we may first assume, without loss of generality, that $h$ is a Lipschitz function. Let $(Y_n)_{n \in \N}$ be a collection of i.i.d. random variables with distribution given by (see Section~\ref{sec:sec2.2} for the notation)
    \begin{equation*}
        \forall k \in \mathbb{Z} \setminus \{ 0 \}, ~~ \mathbb{P} \left[ Y_n =  k \right] = q_3(k).
    \end{equation*}
    We then denote $X_n := \sum_{k = 1}^n Y_k$ the random walk and let $(X_t)_{t \geq 0}$ be its linear interpolation. The results of Sepanski~\cite[Theorem 1]{sepanski1997some}, which extends Donsker's Theorem to random variables with infinite variance but which are in the basin of attraction of a normal distribution, such as the random variables $(Y_n)_{n \in \N}$ above, implies that
    \begin{equation} \label{eq:invarianceprinciplecriticalcase}
        \left( \frac{X_{N^2 t/ \ln N}}{N} \right)_{t \geq 0} \overset{\mathcal{L}}{\underset{N \to \infty}{\longrightarrow}} \left(\sqrt{\frac{2}{c_3}}B_t\right)_{t \geq 0} ~~\mbox{in}~~C([0 , \infty), \mathbb{R}),
    \end{equation}
    where $(B_t)_{t \geq 0}$ is a standard Brownian motion. 
    
    For $N \in \N$, let us denote by $X^N_t :=  \frac1NX_{N^2 t/ \ln N} $ and let $T^N$ be the stopping time
    \begin{equation*}
        T^N := \inf \left\{ k \in \frac{\ln N}{N^2} \mathbb{N} \, : \, \left| X^N_k \right| \geq 1 \right\}.
    \end{equation*}
    By the definition of the Green's function $G_{N , 3}$, we have
    \begin{equation} \label{eq:identititygreensrandomwalk}
        \frac{\ln N}{N^{2}}\sum_{x \in \Lambda_N} h \left( \frac{x}{N} \right) G_{N , 3} (0,x) = c_3^{-1} \mathbb{E} \left[ \frac{\ln N}{N^2} \sum_{k \in \frac{\ln N}{N^2} \mathbb{N} } h\left(X^N_k\right) \mathbf{1}_{\{ k \leq T^N\}}\right].
    \end{equation}
    By Skorokhod’s representation theorem~\cite[Theorem 6.7]{billingsley2013convergence}, there exists a coupling between the stochastic processes $(X^{N})_{N \in \N}$ such that, for any $T \in (1 , \infty)$,
    \begin{equation*}
        \sup_{t \in [0 , T]} \left|X^N_t - \sqrt{\frac{2}{c_3}} B_t \right| \overset{a.s.}{\underset{N \to \infty}{\longrightarrow}} 0.
    \end{equation*}
    This implies in particular
    \begin{equation*}
        T^N  \overset{a.s.}{\underset{N \to \infty}{\longrightarrow}} T_{\sqrt{c_3/2}} ~~\mbox{with, for}~a \in (0 , \infty),~ T_a := \inf \left\{ t \geq 0 \, : \, |B_t| \geq a \right\}
    \end{equation*}
    (N.B. we use implicitly here that, for the Brownian motion and for any $a \in (0 , \infty)$, $T_{a + \varepsilon}$ converges almost surely to $T_{a}$ as $\varepsilon \to 0$).
    Combining the two previous displays and recognising a Riemann sum, we deduce the almost sure convergence
    \begin{equation} \label{eq:almostsureconvergencecriticalcase}
        \frac{\ln N}{N^2} \sum_{k \in \frac{\ln N}{N^2} \mathbb{N} } h\left(X^N_k\right) \mathbf{1}_{\{ k \leq T^N\}} \overset{a.s.}{\underset{N \to \infty}{\longrightarrow}} \int_0^{T_{\sqrt{c_3/2}}} h \left(  \sqrt{\frac{2}{c_3}} B_t \right) dt.
    \end{equation}
    Additionally, we note that, by the convergence~\eqref{eq:invarianceprinciplecriticalcase} (and the definition of the random walk), the random variables $(X_{k+1}^N - X_{k}^N)_{k \in \N}$ are independent and have positive probability to be larger than $1$ (at least for $N$ sufficiently large). This implies that the stopping times $(T^N)_{N \in \N}$ have exponential moments (uniform over $N$), and in particular
    \begin{equation} \label{eq:L2upperbound}
        \sup_{N \in \N} \bbE \left[ (T^N)^2 \right] < \infty.
    \end{equation}
    Using that the function $h$ is bounded and denoting by $\left\| h \right\|_\infty$ its $\mathbb{L}^\infty$-norm, we deduce that
    \begin{equation*}
        \left| \frac{\ln N}{N^2} \sum_{k \in \frac{\ln N}{N^2} \mathbb{N} } h\left(X^N_k\right) \mathbf{1}_{\{ k \leq T^N\}}  \right| \leq \left\| h \right\|_\infty T^N.
    \end{equation*}
    Combined with~\eqref{eq:L2upperbound}, we obtain that the collection of random variables on the left-hand side of the previous display is uniformly integrable. Combining this observation with the almost-sure convergence~\eqref{eq:almostsureconvergencecriticalcase}, we obtain the convergence of the expectation
    \begin{equation*}
            c_3^{-1} \mathbb{E} \left[ \frac{\ln N}{N^2} \sum_{k \in \frac{\ln N}{N^2} \mathbb{N} } h\left(X^N_k\right) \mathbf{1}_{\{ k \leq T^N\}}\right] \underset{N \to \infty}{\longrightarrow}  c_3^{-1} \mathbb{E} \left[ \int_0^{T_{\sqrt{c_3/2}}} h\left(  \sqrt{\frac{2}{c_3}} B_t \right) dt \right].
    \end{equation*}
    A time-change of the Brownian motion shows the identity
    \begin{equation*}
     c_3^{-1} \mathbb{E} \left[ \int_0^{T_{\sqrt{c_3/2}}} h\left(  \sqrt{\frac{2}{c_3}} B_t \right) dt \right] = \mathbb{E} \left[ \int_0^{T_{1/\sqrt{2}}} h\left(  \sqrt{2} B_t \right) dt \right] = \int_{-1}^1  h \left( x \right) G (x) \, dx 
    \end{equation*}
    Combining the two previous displays with the identity~\eqref{eq:identititygreensrandomwalk} completes the proof of Proposition~\ref{prop:convergenceoccupationmeasure} in the case $\alpha = 3$.\\
    
    In the case $\alpha > 3$, Theorem 1 of~\cite[Section 23]{spitzer1976principles} states that 
    \begin{equation} \label{eq:unifconvgreensalpha>3}
        \sup_{x \in \Lambda_N} \left| \frac{G_{N , \alpha} \left(0,x\right)}{N} - \frac{G\left(\frac{x}{N}\right)}{\sigma_\alpha} \right| \underset{N \to \infty}{\longrightarrow} 0,
    \end{equation}
    which implies the result (and in fact implies the convergence stated in Proposition~\ref{prop:convergencesquareGreens}).
\end{proof}

\subsection{Decay of the Fourier coefficients of the Green's function} \label{section5.2}

For each $N \in \N$, we introduce the Fourier basis: for $k \in \{-N , \ldots, N\}$
\begin{equation*}
    \forall x \in \{-N , \ldots, N \}, ~~ e_k^N (x) := e^{2i \pi k \frac{x}{2N+1}}.
\end{equation*}
In the following proposition we prove a quantitative estimate on the Fourier coefficients of the discrete Green's function $ G_{N , \alpha}$.

\begin{proposition} \label{prop:prop5.3}
    There exists a constant $C := C(\alpha) < \infty$ such that, for any $N \in \N$ and any $k \in \{-N , \ldots, N\}$,
\begin{equation} \label{eq:boundhighfrequencyGreens}
    \left|  \sum_{x \in \Lambda_N} e_k^N (x)  G_{N , \alpha}(0,x) \right| \leq 
    \left\{ \begin{aligned}
    \frac{CN^{\alpha - 1}}{|k|_+^{\alpha -1}} &~~ \mbox{for}~ \alpha \in (1, 2), \\
    \frac{C N \ln |k|_+}{|k|_+} & ~~\mbox{for}~ \alpha = 2, \\
    \frac{C N^{\alpha - 1}}{|k|_+} & ~~ \mbox{for}~ \alpha \in (2, 3), \\
    \frac{C N^{2}}{(\ln N) |k|_+} & ~~ \mbox{for}~ \alpha = 3, \\
    \frac{C N^{2}}{|k|_+} & ~~ \mbox{for}~ \alpha \in (3, \infty].
    \end{aligned} \right.
\end{equation}
\end{proposition}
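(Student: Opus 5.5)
Write $g(x) := G_{N,\alpha}(0,x)$ and $\hat g(k) := \sum_{x \in \Lambda_N} e_k^N(x) g(x)$. The plan is a summation by parts in $x$, exploiting the symmetry and monotonicity from Proposition~\ref{prop:propgreensfunction}.

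For $k = 0$ the bound reduces to controlling $\sum_x g(x)$. This is the expected exit time from $\Lambda_N$ (up to the factor $c_\alpha$), i.e.\ $G_{N,\alpha}\mathbf 1(0)$. Summing the pointwise bounds of Proposition~\ref{prop:propgreensfunction}(3) over $|x|\le N$ gives exactly the claimed orders:
\begin{itemize}
\item $N^{\alpha-1}$ for $\alpha\in(1,2)$, since $\sum_{|x|\le N}|x|_+^{\alpha-2}\asymp N^{\alpha-1}$;
\item $N$ for $\alpha=2$, since $\sum_x(1+\ln(N/|x|_+))\le CN$;
\item $N^{\alpha-1}$, $N^2/\ln N$ and $N^2$ respectively in the three remaining cases.
\end{itemize}
So the $k=0$ case is immediate. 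Call this quantity $S_N$.

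For $k\neq 0$, I use that $g$ is even, non-increasing on $\{0,\dots,N\}$, and vanishes at $\pm(N+1)$. Writing $\hat g(k)=g(0)+2\sum_{x=1}^N g(x)\cos(2\pi kx/(2N+1))$, I apply Abel summation with the partial sums $D_m=\sum_{x=1}^m\cos(2\pi kx/(2N+1))$. These satisfy $|D_m|\le C N/|k|$, because $|\sin(\pi k/(2N+1))|\ge c|k|/N$ for $|k|\le N$. Since the increments $g(x)-g(x+1)$ are nonnegative and telescope, this yields
\[
|\hat g(k)|\le g(0)+C\frac{N}{|k|}\,g(0).
\]
It remains to compare $N g(0)$ with $S_N$ in each regime.
\begin{itemize}
\item For $\alpha>2$, Proposition~\ref{prop:propgreensfunction}(3) gives $g(0)\le CN^{\alpha-2}$, $CN/\ln N$ or $CN$. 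Hence $N g(0)/|k|$ is exactly the claimed bound, and the additive $g(0)$ is absorbed since $|k|\le N$.
\item For $\alpha\le 2$, this crude argument loses: $g(0)$ is of order $1$ or $\ln N$, which gives $N/|k|$ instead of $N^{\alpha-1}/|k|^{\alpha-1}$.
\end{itemize}

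For $\alpha\le 2$ I split the sum at scale $m=N/|k|$. On $|x|\le m$ I bound trivially by $\sum_{|x|\le m} g(x)$. For the part $x>m$ I run Abel summation started at $m$, giving a contribution at most $C(N/|k|)\,g(m)$. With $g(x)\le C|x|_+^{\alpha-2}$ both pieces are $\lesssim m^{\alpha-1}=(N/|k|)^{\alpha-1}$. At $\alpha=2$ the pieces are $m(1+\ln(N/m))$ and $m\ln|k|$, i.e.\ $C N\ln|k|_+/|k|_+$. Monotonicity is used again here so that the tail Abel sum involves only $g(m)$.

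The main obstacle is making the Dirichlet kernel bound $|D_m|\lesssim N/|k|$ uniform in $m$ and $k$, and handling the boundary term at $x=N$. The latter vanishes because $g(N+1)=0$, by Proposition~\ref{prop:propgreensfunction}(1). Everything else follows directly from Proposition~\ref{prop:propgreensfunction}.
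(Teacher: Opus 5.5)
Your proposal is correct and follows essentially the same route as the paper: summation by parts against the even, monotone Green's function, using a uniform bound on the partial sums of the Fourier mode. For $\alpha>2$ this telescopes to $C N G_{N,\alpha}(0,0)/|k|$; for $\alpha\le 2$ you split at scale $N/|k|$, with the trivial bound inside and Abel summation outside. Your reduction to a cosine sum over $\{1,\dots,N\}$ and your separate treatment of $k=0$ are only cosmetic differences, and the latter is handled more explicitly than in the paper, whose split at $N/|k|$ is not defined for $k=0$.
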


\begin{proof}[Proof of Proposition~\ref{prop:prop5.3}]

To prove the inequality~\eqref{eq:boundhighfrequencyGreens}, we distinguish three cases: whether $\alpha > 2$, $\alpha < 2$ or $\alpha = 2$.

\underline{Case 1: $\alpha > 2$.} We first perform a summation by parts and write
\begin{equation} \label{eq:IPPscalinglim}
     \sum_{x \in \Lambda_N} e_k^N (x)  G_{N , \alpha}(0,x) = - \sum_{x \in \Lambda_N} \mathcal{I}_{k}^N(x) \times N\left[G_{N , \alpha}(0,x+1) -  G_{N , \alpha}(0,x)\right].
\end{equation}
where
$
\mathcal{I}_{k}^N(x) := \frac{1}{N} \sum_{y = -N}^x e_k^N(y)
$
is the discrete integral of the Fourier mode $e_k^N$. We next claim that the following upper bound holds: for $k \in \{-N , \ldots, N\}$ and any $x \in \Lambda_N$,
\begin{equation} \label{eq:integralfouriermode}
\left|\mathcal{I}_{k}^N(x) \right| \leq \frac{C}{|k|+1}.
\end{equation}
This inequality is immediate when $k = 0$. For $k \in \{-N , \ldots, N\} \setminus \{0 \}$, it follows from the computation
\begin{equation*}
    \left|\mathcal{I}_{k}^N(x) \right|  = \frac{1}{N} \left| \frac{e^{2i \pi k \frac{-N}{2N+1}} - e^{2i \pi k \frac{x + 1}{2N+1}}}{e^{2i \pi \frac{k}{2N+1}} - 1} \right| \leq \frac{2}{N} \left| \frac{1}{e^{2i \pi \frac{k}{2N+1}} - 1} \right| \leq \frac{C}{|k|}.
\end{equation*}
Combining~\eqref{eq:IPPscalinglim} and~\eqref{eq:integralfouriermode}, we deduce that
\begin{equation*}
    \left| \sum_{x \in \Lambda_N} e_k^N (x)  G_{N , \alpha}(0,x) \right| \leq \frac{C N}{|k|+1} \sum_{x \in \Lambda_N} \left|  G_{N , \alpha}(0,x+1) -  G_{N , \alpha}(0,x)  \right|.
\end{equation*}
The properties of the Green's function stated in Proposition~\ref{prop:propgreensfunction} imply that
\begin{align*}
    N \sum_{x \in \Lambda_N} \left|  G_{N , \alpha}(0,x+1) -  G_{N , \alpha}(0,x)  \right| & = N \sum_{x = -N}^0  (G_{N , \alpha}(0,x+1) -  G_{N , \alpha}(0,x)) \\
    & \quad + N \sum_{x = 0}^N  (G_{N , \alpha}(0,x) -  G_{N , \alpha}(0,x+1)) \\
    & = 2N  G_{N , \alpha}(0,0).
\end{align*}
Combining the two previous inequalities with the upper bound on the value $G_{N , \alpha}(0,0)$ stated in Proposition~\ref{prop:propgreensfunction} completes the proof of the inequality~\eqref{eq:boundhighfrequencyGreens} in the case $\alpha > 2.$ \\

\underline{Case 2: $\alpha < 2$.} In this range of exponent, we decompose the sum by writing
\begin{multline} \label{eq:identitycase2Scalinglim}
    \sum_{x \in \Lambda_N} e_k^N (x)  G_{N , \alpha}(0,x) =  \sum_{x = -N}^{- N/|k| - 1} e_k^N (x)  G_{N , \alpha}(0,x) \\
    +  \sum_{x = - N/|k|}^{N/|k|} e_k^N (x)  G_{N , \alpha}(0,x) +  \sum_{x = N/|k|+1}^{N} e_k^N (x)  G_{N , \alpha}(0,x) .
\end{multline}
We then estimate the three terms on the right-hand side. For the first one, we use a summation by parts and write
\begin{multline*} 
     \sum_{x = -N}^{- N/|k| - 1} e_k^N (x)  G_{N , \alpha}(0,x)  = \mathcal{I}_{k}^N(-N/|k|-1) G_{N , \alpha}(0,-N/|k|) \\
     - \sum_{x = -N}^{- N/|k| - 1} \mathcal{I}_{k}^N(x) \times N\left[G_{N , \alpha}(0,x+1) -  G_{N , \alpha}(0,x)\right].
\end{multline*}
The two terms on the right-hand side are estimated by using the upper bound~\eqref{eq:integralfouriermode} together with the upper bound on the fractional Green's function stated in Proposition~\ref{prop:propgreensfunction}, so
\begin{align*}
    \lefteqn{\left| \sum_{x = -N}^{- N/|k| - 1} e_k^N (x)  G_{N , \alpha}(0,x) \right|} \qquad & \\ &
    \leq \frac{C}{|k|} \frac{1}{(N/|k|)^{2 - \alpha}} + \frac{CN}{|k|} \sum_{x = -N}^{- N/|k| - 1} \left|  G_{N , \alpha}(0,x+1) -  G_{N , \alpha}(0,x)  \right| \\
    & \leq  \frac{C}{N^{2-\alpha}|k|^{1 - \alpha}} + \frac{CN}{|k|}  G_{N , \alpha}(0,-N/|k|) \\
    & \leq \frac{C}{N^{2-\alpha}|k|^{1 - \alpha}} + \frac{CN}{|k|^{1 - \alpha}}  \frac{1}{(N/|k|)^{2 - \alpha}}\\
    & \leq \frac{CN^{\alpha-1}}{|k|^{1 - \alpha}}.
\end{align*}
The third term on the RHS of~\eqref{eq:identitycase2Scalinglim} is estimated using the same computation to get
\begin{equation*}
    \left| \sum_{x = N/|k| + 1}^{N} e_k^N (x)  G_{N , \alpha}(0,x) \right| \leq \frac{CN^{\alpha-1}}{|k|^{1 - \alpha}}.
\end{equation*}
There remains to study the second term on the right-hand side of~\eqref{eq:identitycase2Scalinglim}. We use the upper bound $\left|e_k^N (x)\right| \leq 1$ together with the result of Proposition~\ref{prop:propgreensfunction} to write
\begin{align*}
    \left| \sum_{x = - N/|k|}^{N/|k|} e_k^N (x)  G_{N , \alpha}(0,x) \right| & \leq  \sum_{x = - N/|k|}^{N/|k|} G_{N , \alpha}(0,x) \\
    & \leq \sum_{x = - N/|k|}^{N/|k|}\frac{C}{(1+|x|)^{2 - \alpha}} \\
    & \leq  \frac{C N^{\alpha - 1}}{|k|^{\alpha -1}}.
\end{align*}
Combining the three previous displays with~\eqref{eq:identitycase2Scalinglim} completes the proof of~\eqref{eq:boundhighfrequencyGreens} in the case $\alpha \in (1,2).$\\

\underline{Case 3: $\alpha = 2$.} This case is similar to the previous one; the only difference lies is the asymptotic behaviour of the Green's function. We start with the identities
\begin{multline} \label{eq:identitycase2Scalinglim2}
    \sum_{x \in \Lambda_N} e_k^N (x)  G_{N , \alpha}(0,x) =  \sum_{x = -N}^{- N/|k| - 1} e_k^N (x)  G_{N , \alpha}(0,x) \\
    +  \sum_{x = - N/|k|}^{N/|k|} e_k^N (x)  G_{N , \alpha}(0,x) +  \sum_{x = N/|k|+1}^{N} e_k^N (x)  G_{N , \alpha}(0,x) .
\end{multline}
and
\begin{multline*}
     \sum_{x = -N}^{- N/|k| - 1} e_k^N (x)  G_{N , \alpha}(0,x)  = \mathcal{I}_{k}^N(-N/|k|-1) G_{N , \alpha}(0,-N/|k|) \\
     - \sum_{x = -N}^{- N/|k| - 1} \mathcal{I}_{k}^N(x) \times N \left[G_{N , \alpha}(0,x+1) -  G_{N , \alpha}(0,x) \right].
\end{multline*}
We now use the upper bound~\eqref{eq:integralfouriermode} and the properties of the Green's function stated in Proposition~\ref{prop:propgreensfunction} to obtain
\begin{equation*}
     \left| \sum_{x = -N}^{- N/|k| - 1} e_k^N (x)  G_{N , \alpha}(0,x) \right| \leq \frac{C}{|k|} \ln |k| + \frac{CN}{|k|}  G_{N , \alpha}(0,-N/|k|)  \leq \frac{CN }{|k|} \ln |k|.
\end{equation*}
A similar argument gives
\begin{equation*}
     \left| \sum_{x =  N/|k| + 1}^{N} e_k^N (x)  G_{N , \alpha}(0,x) \right| \leq \frac{C}{|k|} \ln |k| + \frac{CN}{|k|}  G_{N , \alpha}(0,N/|k|)  \leq \frac{CN }{|k|} \ln |k|.
\end{equation*}
Finally, we write
\begin{align*}
    \left| \sum_{x = - N/|k|}^{N/|k|} e_k^N (x)  G_{N , \alpha}(0,x) \right|  \leq  \sum_{x = - N/|k|}^{N/|k|} G_{N , \alpha}(0,x) & \leq C \sum_{x = - N/|k|}^{N/|k|}  \ln \left( N/(1+|x|)\right) \\
    & \leq  \frac{CN }{|k|} \ln |k|.
\end{align*}
Combining the three previous displays with~\eqref{eq:identitycase2Scalinglim2} completes the proof of~\eqref{eq:boundhighfrequencyGreens} in the case $\alpha =2.$
\end{proof}

\subsection{Proof of the convergence of the $\mathbb{L}^2$-norm of the discrete Green's function} \label{section5.3}

\begin{proof}[Proof of Proposition~\ref{prop:convergencesquareGreens}]
To simplify the notation, we only prove the result in the case $\alpha \in (3/2, 3)$. The argument is the same in the cases $\alpha = 3$ and $\alpha \in (3, \infty]$ but the proof differs on a notational level due to the different limits and scaling factors.

By the (discrete) Plancherel identity, we have, for any $N \in \N$,
\begin{equation} \label{eq:plancherel1}
    \sum_{x\in \Lambda_N} G_{N , \alpha}(0,x)^2 = \frac{1}{(2N+1)} \sum_{k = -N}^N \left| \sum_{x \in \Lambda_N} e_k^N (x)  G_{N , \alpha}(0,x) \right|^2.
\end{equation}
and 
\begin{equation} \label{eq:plancherel2}
    \int_{-1}^{1} G_{\alpha}(x)^2 \, dx = \frac{1}{2} \sum_{k \in \mathbb Z} \left| \int_{-1}^{1} e^{i k \pi x} G_\alpha(x) \, dx \right|^2.
\end{equation}
By Proposition~\ref{prop:convergenceoccupationmeasure} (with the function $h(x) = e^{i \pi k x}$), we have, for any $k \in \mathbb{Z}$,
\begin{equation*}
    \frac{1}{N^{\alpha-1}} \sum_{x \in \Lambda_N} e^{i \pi k \frac{x}{N}}  G_{N , \alpha}(0,x)  \underset{N \to \infty}{\longrightarrow}  \int_{-1}^{1} e^{i \pi k x } G_{\alpha}(x) \, dx.
\end{equation*}
Additionally, using that the function $x \mapsto e^{i x}$ is $1$-Lipschitz, we have
\begin{align*}
    \frac{1}{N^{\alpha-1}} \left| \sum_{x \in \Lambda_N} \left( e^{i \pi k \frac{x}{N}} - e^{2 i \pi k \frac{x}{2N+1}} \right)  G_{N , \alpha}(0,x) \right| & \leq \frac{C |k|}{N^{\alpha-1}}   \sum_{x \in \Lambda_N} \left| x \right| \left( \frac{1}{N} - \frac{2}{2N+1} \right)  G_{N , \alpha}(0,x) \\
    & \leq  \frac{C |k|}{N^{\alpha-1}}   \sum_{x \in \Lambda_N} \frac{\left| x \right|}{N^2}  G_{N , \alpha}(0,x) \\
    & \leq  \frac{C |k|}{N}   \underset{\leq C}{\underbrace{\frac{1}{N^{\alpha-1}}  \sum_{x \in \Lambda_N} G_{N , \alpha}(0,x)}} \\
    & \leq  \frac{C |k|}{N},
\end{align*}
where in the second line we used the inequality $|x| \leq N$ for any $x \in \Lambda_N$ and in the third inequality we used the upper bound on the Green's function stated in Proposition~\ref{prop:propgreensfunction} (or Proposition~\ref{prop:convergenceoccupationmeasure} with the function $h = 1$). A combination of the two previous displays shows that, for any integer $k \in \mathbb Z$,
\begin{equation} \label{eq:convergenceFouriermodes}
    \frac{1}{N^{\alpha-1}} \sum_{x \in \Lambda_N} e_k^N(x)  G_{N , \alpha}(0,x)  \underset{N \to \infty}{\longrightarrow}  \int_{-1}^{1} e^{i \pi k x } G_{\alpha}(x) \, dx.
\end{equation}
To ease the notation in the rest of the proof, let us denote by, for any $N \in \N$ and any $k \in \mathbb{Z}$,
\begin{equation*}
    a_k^N :=
    \begin{cases}
    \displaystyle\frac{1}{\sqrt{2N+1}} \frac{1}{N^{\alpha - \frac32}} \sum_{x \in \Lambda_N} e_k^N (x)  G_{N , \alpha}(0,x) & ~~\mbox{if} ~~ k \in \{-N , \ldots, N\} \\
    0 & ~~\mbox{if}  ~~ k \in \mathbb{Z} \setminus \{-N , \ldots, N\}
    \end{cases}
\end{equation*}
and, for any $k \in \N$,
\begin{equation*}
    a_k := \frac{1}{\sqrt{2}} \int_{-1}^{1} e^{i \pi k x } G_{\alpha}(x) \, dx.
\end{equation*}
Using this notation, the Plancherel identity~\eqref{eq:plancherel1} can be rewritten as follows
\begin{equation*}
    \frac{1}{N^{2\alpha -3}}\sum_{x\in \Lambda_N} G_{N , \alpha}(0,x)^2 =  \sum_{k \in \mathbb{Z}} (a_k^N)^2.
\end{equation*}
By~\eqref{eq:convergenceFouriermodes}, we have, for any $k \in \mathbb Z$
\begin{equation*}
    (a_k^N)^2 \underset{N \to \infty}{\longrightarrow} a_k^2,
\end{equation*}
and, by Proposition~\ref{prop:prop5.3}, for any $N \in \N$,
\begin{equation*}
    (a_k^N)^2 \leq
    \begin{cases} 
        \displaystyle \frac{C}{|k|_+^{2 \alpha -2}} &~~\mbox{if} ~~ \alpha \in (3/2, 2), \\
        \displaystyle \frac{C (\ln |k|_+)^2}{|k|_+^{2}} &~~\mbox{if} ~~ \alpha = 2, \\
        \displaystyle \frac{C}{|k|_+^{2}} &~~\mbox{if} ~~ \alpha \in (2 , 3).
    \end{cases}
\end{equation*}
In each case, we have respectively
\begin{equation*}
    \sum_{k \in \mathbb{Z}} \frac{1}{|k|_+^{2 \alpha -2}} < \infty, \hspace{5mm} \sum_{k \in \mathbb{Z}} \frac{(\ln |k|_+)^2}{|k|_+^{2}} < \infty ~~\mbox{and}~~ \sum_{k \in \mathbb{Z}} \frac{1}{|k|_+^{2}} < \infty.
\end{equation*}
(N.B. we use here that if $\alpha > 3/2$, then $2 \alpha - 2 > 1$).
Combining the four previous displays with the Plancherel identity~\eqref{eq:plancherel2} and the dominated convergence theorem, we deduce that
\begin{equation*}
    \frac{1}{N^{2\alpha -3}}\sum_{x\in \Lambda_N} G_{N , \alpha}(0,x)^2 =  \sum_{k \in \mathbb{Z}} (a_k^N)^2 \underset{N \to \infty}{\longrightarrow} \sum_{k \in \mathbb{Z}} (a_k)^2 = \int_{-1}^{1} G_{\alpha}(x)^2 \, dx .\qedhere
\end{equation*}
\end{proof}

\section{Quantitative delocalisation of the height at the origin} \label{sec:quantdelocheightorigin}
The purpose of this section is to prove Theorem \ref{thm:deloc}. The strategy is to show that for all $\alpha>3/2$ and for any realisation of disorder $\eta$, the distance (in $\mathbb{L}^2(\Gib)$-norm) between the height function and the real-valued ground state at 0 is negligible compared to the typical size of the real-valued ground state at 0.
This is the content of Proposition~\ref{eq:distanceheighttogroundstate} below.
Since we identified in Proposition~\ref{prop:convergencesquareGreens} the scaling limit of the ground state at the origin, we easily deduce the first three items of Theorem \ref{thm:deloc}.
The fourth one follows as a by-product of Proposition~\ref{eq:distanceheighttogroundstate}, but the conclusion differs as this time the error term is greater than the typical height of the ground state.

\begin{proposition} \label{prop:distanceheighttogroundstate}
For any decay exponent $\alpha \in (1 , \infty]$ and any inverse temperature $\beta > 0$, there exists a constant $C := C(\alpha, \beta) < \infty$ such that, for any realisation of the disorder $\eta$, any disorder strength $\lambda > 0$ and any $N \in \N$
\begin{equation} \label{eq:distanceheighttogroundstate}
    \left\Vert \phi(0)-\lambda G_{N,\alpha} \eta(0) \right\Vert_{\mathbb{L}^2(\Gib)} \leq C \begin{cases}
        \ln |N|_+ & \mbox{ for } \alpha \in (1,2), \\
        (\ln |N|_+)^2 & \mbox{ for } \alpha =2, \\
        N^{\alpha-2} & \mbox{ for } \alpha \in (2,3), \\
        N/\ln |N|_+ & \mbox{ for } \alpha =3,\\
        N & \mbox{ for } \alpha >3.
    \end{cases}
\end{equation}
\end{proposition}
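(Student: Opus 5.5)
The plan follows the ground-state sketch of Section~\ref{sec:sketchofproof}, with the minimisation problem~\eqref{eq:groundstatesketch} replaced by the Boltzmann--Gibbs variational principle~\eqref{eq:BGVP}. Set $f := \phi - \lambda G_{N,\alpha}\eta$, a random element of $\Omega_N^\R$ under $\Gib$. By~\eqref{eq:def2GSrealvaluedsec2},
\begin{equation*}
f(0) = \sum_{x\in\Lambda_N} G_{N,\alpha}(0,x)\,(-L_{N,\alpha} f)(x) = \sum_{x\in\Lambda_N} G_{N,\alpha}(0,x)\bigl((-L_{N,\alpha}\phi)(x) - \lambda\eta_x\bigr).
\end{equation*}
Since $x\mapsto G_{N,\alpha}(0,x)$ is even and non-increasing on $\N$ (Proposition~\ref{prop:propgreensfunction}), a layer-cake decomposition writes it as a nonnegative combination of indicators of the symmetric intervals $I_r := \{-r,\dots,r\}$:
\begin{equation*}
G_{N,\alpha}(0,\cdot) = \sum_{r=0}^{N} \bigl(G_{N,\alpha}(0,r) - G_{N,\alpha}(0,r+1)\bigr)\mathbf 1_{I_r}.
\end{equation*}
It therefore suffices to bound, uniformly in $r$ and $\eta$, the $\mathbb L^2(\Gib)$-norm of $Y_r := \sum_{x\in I_r} (-L_{N,\alpha}f)(x) = \Phi_{I_r}(\phi)\cdot(-1)\ldots$. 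Concretely, $Y_r = -\Phi_{I_r}(\phi) - \lambda S_{I_r}(\eta)$, using that internal interactions cancel when summing $L_{N,\alpha}\phi$ over an interval. Minkowski's inequality then gives
\begin{equation*}
\|f(0)\|_{\mathbb L^2(\Gib)} \le \sum_r \bigl(G_{N,\alpha}(0,r)-G_{N,\alpha}(0,r+1)\bigr)\,\|Y_r\|_{\mathbb L^2(\Gib)}.
\end{equation*}

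The key step, and the one I expect to be the main obstacle, is the estimate
\begin{equation*}
\|Y_r\|_{\mathbb L^2(\Gib)} \le C\,(1+\Cap(I_r)),
\end{equation*}
with $C$ depending only on $\alpha,\beta$. The analogue of this for the mean is exactly the Jensen argument of Lemma~\ref{lem:balance-estimate}. Shifting by $\pm\mathbf 1_{I_r}$ changes the energy by $\mp Y_r + \tfrac12\Cap(I_r)$, and the bijection identity gives $\E[e^{\pm\beta Y_r}] = e^{\beta\Cap(I_r)/2}$ exactly: under $\phi\mapsto\phi+\mathbf 1_{I_r}$, $\Gib$ transforms with Radon--Nikodym factor $e^{-\beta(\text{energy change})}$. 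Hence $\E[\cosh(\beta Y_r)] = e^{\beta\Cap(I_r)/2}$, which yields exponential concentration:
\begin{equation*}
\Gib(|Y_r| > t) \le 2e^{\beta\Cap(I_r)/2 - \beta t},
\end{equation*}
and so $\|Y_r\|_{\mathbb L^2} \le C(\beta)(1+\Cap(I_r))$. This is cleaner than going through~\eqref{eq:BGVP}; if the exact change of measure caused trouble I would instead test~\eqref{eq:BGVP} against the pushforward $\nu := \Gib\circ(\phi\mapsto\phi\pm\mathbf 1_{I_r}\mathbf 1_{\{\pm Y_r>t\}})$, using that entropy can drop by at most $\ln 2$ under such a map.

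It remains to sum. Using the capacity bound~\eqref{eq:bound_capacity} with $|I_r| = 2r+1$ and summing by parts,
\begin{equation*}
\sum_r \bigl(G(0,r)-G(0,r+1)\bigr)(1+\Cap(I_r)) \le G_{N,\alpha}(0,0) + \sum_{r\ge1} G_{N,\alpha}(0,r)\bigl(\Cap(I_r)-\Cap(I_{r-1})\bigr)_+ .
\end{equation*}
The increments of the capacity are bounded by $C r^{1-\alpha}$ for $\alpha<2$ and by $C/r$ for $\alpha\ge2$, up to harmless adjustments. Inserting the Green's function bounds of Proposition~\ref{prop:propgreensfunction} case by case:
\begin{itemize}
\item For $\alpha\in(1,2)$: $\sum_r r^{\alpha-2} r^{1-\alpha} \approx \ln N$.
\item For $\alpha=2$: $\sum_r \ln(N/r)/r \approx (\ln N)^2$.
\item For $\alpha>2$: the capacity is bounded, so the total is $\lesssim G_{N,\alpha}(0,0)$, which is $N^{\alpha-2}$, $N/\ln N$, or $N$ respectively.
\end{itemize}
These are precisely the rates in~\eqref{eq:distanceheighttogroundstate}, and none of the bounds depend on $\eta$ or $\lambda$.
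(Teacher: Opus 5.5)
Your proof is correct, and it takes a genuinely different route from the paper in both of its main steps.

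\textbf{The interval estimate.} The paper does not use an exact change of variables here. It applies \eqref{eq:BGVP} to the law of $\phi+K\mathbf 1_B$ for a $\phi$-dependent integer $K$: first $K=\pm1$ according to the sign of $\sum_B\rho$, then $K=\lceil\delta\sum_B\rho\rfloor$ with $\delta=\Cap(B)^{-1}$. It controls the entropy loss $\Ent(K\mid\widetilde\phi)$ by comparison with a geometric law. This yields only the two moment bounds $\E|\sum_B\rho|\le\frac12\Cap(B)+\ln 2/\beta$ and $\E(\sum_B\rho)^2\le\frac12\Cap(B)^2+C(\beta)\Cap(B)$.

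Your observation is that $\phi\mapsto\phi\pm\mathbf 1_{I_r}$ is a bijection of $\Omega_N$, so that $\E[e^{\pm\beta Y_r}]=e^{\beta\Cap(I_r)/2}$ holds exactly. This is the same identity the paper itself combines with Jensen in Lemma~\ref{lem:balance-estimate}. Feeding it into a Chernoff bound instead is shorter and gives exponential tails, hence all moments, with a slightly better leading term ($\frac14\Cap(I_r)^2$). One small correction: the energy change under $+\mathbf 1_{I_r}$ is $+Y_r+\frac12\Cap(I_r)$, not $-Y_r+\frac12\Cap(I_r)$. This does not matter, since you use both signs.

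\textbf{The summation.} The paper splits $\Lambda_N$ into $\{1,\dots,N\}$, $\{-N,\dots,-1\}$ and $\{0\}$, sums by parts against the one-sided intervals $\{1,\dots,x\}$, and applies Cauchy--Schwarz. This gives $C\,G_{N,\alpha}(0,0)\Cap(\Lambda_N)$, which suffices for $\alpha\ge2$ but is too weak for $\alpha<2$, where the paper adds a separate dyadic-block argument. Your approach uses a layer-cake over symmetric intervals, then Minkowski (which is never worse than Cauchy--Schwarz with these weights), then an Abel summation against the capacity increments $\Cap(I_r)-\Cap(I_{r-1})\le Cr^{1-\alpha}$. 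This treats all regimes uniformly and produces the $\ln N$ rate for $\alpha\in(1,2)$ directly. Do keep the boundary term $G_{N,\alpha}(0,0)\Cap(I_0)$, which your summation by parts drops; it is harmless, being of order $G_{N,\alpha}(0,0)$.

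\textbf{Comparison.} The paper's variational route mirrors the ground-state argument of Section~\ref{sec:sketchofproof} step by step. For this model, your direct argument loses nothing and is simpler.
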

We stress that in \eqref{eq:distanceheighttogroundstate}, the constant $C$ depends on $\alpha$ and $\beta$ but \emph{not} on $\eta$ (nor $\lambda$). This will be crucial in the proof since we will integrate the RHS of \eqref{eq:distanceheighttogroundstate} over $\eta$.

We first deduce Theorem \ref{thm:deloc} from Proposition \ref{prop:distanceheighttogroundstate}, and delay the proof of the latter to Section \ref{section:Control of the error term/distance to the ground state}.

\begin{proof}[Proof of Theorem \ref{thm:deloc}]
    To prove the first three items of Theorem~\ref{thm:deloc}, we note that by combining Proposition~\ref{prop:distanceheighttogroundstate} and Proposition~\ref{prop:convergencesquareGreens} (N.B. we use that the size of the discrete $\mathbb{L}^2$-norm of the square of the Green's function is always larger than the right-hand side of~\eqref{eq:distanceheighttogroundstate} and that this right-hand side does not depend on the disorder $\eta$), we have, for any $\alpha \in (3/2, \infty)$,
    \begin{equation} \label{eq:convergenceto0}
        \frac{ \mathbb{E} \E \left[ \left( \phi(0)-\lambda G_{N,\alpha} \eta(0) \right)^2 \right]}{\sum_{x\in \Lambda_N} G_{N , \alpha}(0,x)^2}  \underset{N \to \infty}{\longrightarrow} 0.
    \end{equation}
    Combined with~\eqref{eq:sumandvarianceiid}, the previous convergence implies
    \begin{equation*}
        \bbE \bigl[\E[\phi(0)^2]\bigl]= \lambda^2\bbE\left[G_{N, \alpha}\eta(0)^2\right] +o_{N \to \infty}\left(\bbE\left[G_{N, \alpha}\eta(0)^2\right]\right)
    \end{equation*}
    and thus
    \begin{equation*}
        \frac{\mathbb{E} \E \left[ \phi(0)^2 \right]}{\sum_{x\in \Lambda_N} G_{N , \alpha}(0,x)^2} \underset{N \to \infty}{\longrightarrow} \lambda^2.
    \end{equation*}
    The asymptotics on the $\mathbb{L}^2$-norm of the Green's function established in Proposition~\ref{prop:convergencesquareGreens} then yield the the first three items of Theorem~\ref{thm:deloc}.

    The fourth item is a direct consequence of Proposition \ref{prop:distanceheighttogroundstate}, which guarantees that for all $\alpha\in(1,2)$,
    \begin{equation*}
        \Vert \phi(0) \Vert_{\mathbb{L}^2(\Gib)} \leq C \ln |N|_+ + \lambda |G_{N, \alpha}\eta(0)|.
    \end{equation*}
    The result in the case $\alpha \in (1,3/2)$ follows, since we have
    $$\bbE\left[G_{N, \alpha}\eta(0)^2\right] \leq C \begin{cases}
        1 & \mbox{ if } \alpha<3/2 \\
        \ln |N|_+ & \mbox{ if } \alpha=3/2
    \end{cases}$$
    (this is a simple computation using \eqref{eq:sumandvarianceiid} and the bounds given in Proposition \ref{prop:propgreensfunction}).
\end{proof}

\subsection{Control of the distance to the ground state}\label{section:Control of the error term/distance to the ground state}
This section is devoted to the proof of Proposition \ref{prop:distanceheighttogroundstate}.
Throughout the section, we fix $\alpha \in(1, \infty]$, $\beta>0$, $\lambda>0$, a realisation $\eta$ of the disorder and $N \in \N$. However, we will make sure that none of the constants depend on $\lambda, \eta$ or $N$.

We first notice that since $G_{N, \alpha} (-L_{N, \alpha})=I_{\Lambda_N}$ (see \eqref{eq:def2GSrealvaluedsec2}),
$$\lambda G_{N, \alpha} \eta(0)-\phi(0) = \sum_{x \in \Lambda_N} G_{N, \alpha}(0,x) [\lambda \eta_x+L_{N, \alpha}\phi(x)]=\sum_{x \in \Lambda_N} G_{N, \alpha}(0,x) \rho_N^{\eta, \phi}(x),$$
where $\rho_{N}^{\eta,\phi}(x) := \lambda\eta_x +L_{N, \alpha}\phi(x)$. 
The idea to estimate the $\mathbb{L}^2$-norm of this quantity will be to use summation by parts to get a telescopic sum.
Thus, we will need to control the sum of $\rho^{\eta, \phi}_N(x)$ for $x$ lying in some intervals.
This is the purpose of the following lemma.

\begin{lemma} For every $x \in \Lambda_{N}, \phi \in \Omega_{N}$, let $\rho_{N}^{\eta,\phi}(x) := \lambda\eta_x +L_{N, \alpha}\phi(x)$.
Then, for every $B \subseteq \Lambda_{N}$, we have
\begin{equation}\label{eq:error_first_moment}
\E \left| \sum_{x \in B}  \rho_N^{\eta,\phi}(x) \right| \leq \frac{1}{2} \Cap(B)+\frac{1}{\beta} \ln(2)
\end{equation} and
\begin{equation}\label{eq:error_second_moment}
\E\left( \sum_{x \in B} \rho_N^{\eta,\phi}(x) \right)^2 \leq \frac12 \Cap(B)^2 + C(\beta) \Cap(B)
\end{equation}
for some constant $C(\beta) \in (0, \infty)$ that only depends on $\beta$.
\end{lemma}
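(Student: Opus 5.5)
The plan is to apply to the finite-volume Gibbs measure $\Gib$ the same block-shift argument as in Lemma~\ref{lem:balance-estimate}, but to keep the full exponential identity rather than only its Jensen consequence. Fix $B \subseteq \Lambda_N$, which we may take nonempty since otherwise there is nothing to prove, and write $R(\phi) := \sum_{x\in B}\rho_N^{\eta,\phi}(x)$.

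\emph{Energy change under a block shift.} Since $\mathcal H^0_{N,\alpha}(\phi) = -\frac12\langle \phi, L_{N,\alpha}\phi\rangle$ and $\Cap(B) = \langle \mathbf 1_B, -L_{N,\alpha}\mathbf 1_B\rangle$, expanding the quadratic form gives
\begin{equation*}
\mathcal H^\eta_N(\phi \pm \mathbf 1_B) - \mathcal H^\eta_N(\phi) = \mp\bigl(\langle \mathbf 1_B, L_{N,\alpha}\phi\rangle + \lambda S_B(\eta)\bigr) + \tfrac12 \Cap(B) = \mp R(\phi) + \tfrac12\Cap(B).
\end{equation*}
The maps $\phi\mapsto\phi\pm\mathbf 1_B$ are bijections of $\Omega_N$, because $B\subseteq\Lambda_N$. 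Hence, with $\Delta_\pm(\phi)$ denoting the energy difference above,
\begin{equation*}
\E\bigl[e^{-\beta\Delta_\pm(\phi)}\bigr] = \frac{1}{Z}\sum_{\phi}e^{-\beta\mathcal H^\eta_N(\phi\pm\mathbf 1_B)} = 1.
\end{equation*}
This is equivalent to $\E[e^{\pm\beta R}] = e^{\beta\Cap(B)/2}$. Consequently $\E[e^{\beta|R|}] \le 2e^{\beta\Cap(B)/2}$, and Jensen's inequality applied to the exponential yields \eqref{eq:error_first_moment} directly.

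\emph{Second moment via tails.} Applying Markov's inequality to $e^{\pm\beta R}$ gives
\begin{equation*}
\Gib\bigl(|R| \ge \tfrac12\Cap(B) + t\bigr) \le 2e^{-\beta t}, \qquad t\ge 0 .
\end{equation*}
Set $X := (|R| - \tfrac12\Cap(B))_+$. Then $\E[X]\le 2/\beta$ and $\E[X^2]\le 4/\beta^2$. From $|R|\le \tfrac12\Cap(B) + X$ we obtain
\begin{equation*}
\E[R^2] \le \tfrac14\Cap(B)^2 + \Cap(B)\,\E[X] + \E[X^2] \le \tfrac14 \Cap(B)^2 + \tfrac{2}{\beta}\Cap(B) + \tfrac{4}{\beta^2}.
\end{equation*}

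\emph{Absorbing the additive constant.} This is the only point needing care, since the statement has no additive constant. It suffices to note that $\Cap(B)\ge 1$ for every nonempty finite $B$. Indeed, if $x$ is the rightmost point of $B$, every $y>x$ lies outside $B$, so $\Cap(B)\ge\sum_{n\ge1}n^{-\alpha}\ge 1$; in the case $\alpha=\infty$ this reduces to the single nearest-neighbour term. The constant $4/\beta^2$ is therefore bounded by $(4/\beta^2)\Cap(B)$, which gives \eqref{eq:error_second_moment} with $C(\beta) = 2/\beta + 4/\beta^2$. In fact this argument produces the leading constant $\tfrac14$, which is even better than the stated $\tfrac12$.
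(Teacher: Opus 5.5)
Your proof is correct, and it takes a genuinely different route from the paper's.

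The paper applies the Boltzmann--Gibbs variational principle \eqref{eq:BGVP} to the law of $\widetilde\phi := \phi + K\mathbf{1}_B$, where $K$ is a $\phi$-measurable integer. This gives $\E[KR] \le \frac12\Cap(B)\,\E[K^2] + \beta^{-1}\Ent(K\mid\widetilde\phi)$. For \eqref{eq:error_first_moment} it takes $K$ to be the sign of $R$, whose conditional entropy is at most $\ln 2$. For \eqref{eq:error_second_moment} it takes $K = \lceil R/\Cap(B)\rfloor$ and bounds the conditional entropy by the maximum-entropy property of the geometric law.

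You instead keep the full exponential identity $\E[e^{\pm\beta R}] = e^{\beta\Cap(B)/2}$, which comes from the bijections $\phi\mapsto\phi\pm\mathbf{1}_B$ of $\Omega_N$. Its Jensen consequence is exactly what drives Lemma~\ref{lem:balance-estimate}. Jensen then gives \eqref{eq:error_first_moment} with the same constants. Markov's inequality applied to $e^{\pm\beta R}$ gives the tail bound $\Gib(|R|\ge\frac12\Cap(B)+t)\le 2e^{-\beta t}$.

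What each route buys:
\begin{itemize}
\item Yours is shorter and avoids all entropy bookkeeping. It controls every moment of $R$ at once and improves the leading constant in \eqref{eq:error_second_moment} from $\frac12$ to $\frac14$.
\item The paper's mirrors the ground-state argument of Section~\ref{sec:sketchofproof}: it tests the competitors $\phi\pm\mathbf{1}_B$ in a minimisation problem, with entropy as the only extra cost.
\end{itemize}

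Both arguments need $\Cap(B)\ge 1$ for nonempty $B$. The paper uses this implicitly when it asserts $\delta = \Cap(B)^{-1}\in(0,1)$, and your justification via the rightmost point of $B$ is fine.
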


\begin{proof}
Let $\phi$ be distributed according to $\Gib$. For some $\mathbb Z-$valued random variable $K$, measurable with respect to $\phi$ (to be determined later), consider $\widetilde{\phi} = \phi + K \mathbf{1}_B$; call $\nu$ the law of $\widetilde \phi$. The strategy of the proof is to apply the Boltzmann--Gibbs variational principle \eqref{eq:BGVP} to $\nu$ and $\Gib$, and then to suitably choose $K$ to conclude.

We start by computing the difference of energy:
\begin{align*}
\mathcal H_{N}^\eta(\widetilde\phi) - \mathcal H_{N}^\eta(\phi) &= -\frac{1}{2}\langle \phi + K \mathbf{1}_B, L_{N, \alpha}(\phi + K \mathbf{1}_B) \rangle + \frac{1}{2}\langle \phi, L_{N, \alpha}\phi \rangle -\langle \phi + K \mathbf{1}_B,\lambda \eta \rangle + \langle \phi, \lambda\eta \rangle \\
&= -K \langle\mathbf{1}_B,L_{N, \alpha}\phi + \lambda \eta \rangle - \frac{1}{2} K^{2} \langle \mathbf{1}_B, L_{N, \alpha} \mathbf{1}_B \rangle \\
&= -K \sum_{x \in B} \rho_N^{\eta,\phi}(x) + \frac{1}{2} K^{2} \Cap(B).
\end{align*}
Therefore, by \eqref{eq:BGVP}, we obtain
\begin{align*}
\Ent(\nu) - \Ent(\Gib) &\leq \beta (\nu[H_N^{\eta}] - \Gib[H_N^{\eta}]) \\
&=\beta \E[H_N^{\eta}(\widetilde{\phi}) - H_N^{\eta}(\phi)] \\
&=- \beta \E \left[ K \sum_{x \in B} \rho_N^{\eta,\phi}(x) \right] + \frac{\beta}{2} \Cap(B) \E[K^{2}].
\end{align*}
Hence,
\begin{equation}\label{eq:result_from_BGVP}
    \E \left[ K \sum_{x \in B} \rho_N^{\eta,\phi}(x) \right] \leq \frac{1}{2}\Cap(B)\E[K^{2}] + \frac{1}{\beta}(\Ent(\Gib) - \Ent(\nu)).
\end{equation}
Since $\widetilde \phi$ is a measurable function of $\phi$, we have
$$\Ent(\Gib)-\Ent(\nu) = \Ent( \phi \mid \widetilde \phi) = \Ent( K \mid \widetilde \phi),$$
the latter coming from the fact given $\widetilde \phi$, $\phi$ is determined by $K$ and vice versa (one has $\Ent(\phi \mid \widetilde \phi, K)=\Ent(K \mid \widetilde \phi, \phi)=0$).

\medskip
\underline{Proof of \eqref{eq:error_first_moment}:} We choose $K = \begin{cases} 1 & \text{if } \sum_{x \in B}\rho_N^{\eta,\phi}(x) \ge 0 \\ -1 & \text{otherwise} \end{cases},$

so that \eqref{eq:result_from_BGVP} becomes
\[
\E \left| \sum_{x \in B} \rho_N^{\eta,\phi}(x) \right| \le \frac{1}{2} \Cap(B) + \frac{\Ent(K\mid \widetilde \phi)}{\beta}.
\]

Then, since $K$ is supported only on a set of cardinal 2, we have $\Ent(K \mid \widetilde \phi) \leq \ln 2$.
Indeed, for any $y \in \Omega_N$, $\Ent(K \mid \widetilde{\phi}=y)$ is of the form $-p \ln p - (1-p) \ln(1-p)$, where $p$ is the probability of $K=1$ given $\widetilde \phi=y$, and this is maximal when $p=1/2$.
This proves \eqref{eq:error_first_moment}.

\medskip 
\underline{Proof of \eqref{eq:error_second_moment}:} We define an odd version of the integer part by $\lceil t \rfloor:= \begin{cases} \lfloor t \rfloor & \text{if }t\ge 0 \\ \lceil t \rceil & \text{if }t<0 \end{cases}$, for all $t\in \mathbb{R}$ (clearly, $\lceil -t \rfloor=-\lceil t \rfloor$).
We shall use the following two inequalities verified by our integer part function:
\begin{enumerate}
\item[(a)] $| \lceil t \rfloor| \leq |t|$ and $ \lceil t \rfloor^2 \leq t  \lceil t \rfloor$ \quad (noticing that $\lceil t \rfloor$ and $t$ have the same sign);
\item[(b)] $t^2 - |t| \leq \lceil t \rfloor t$ \quad (it is clear for $t\geq0$, so it follows by parity).
\end{enumerate}
We choose
\[
K =\left\lceil \delta \sum_{x \in B} \rho_N^{\eta,\phi}(x) \right\rfloor \quad \quad \text{ with }\delta := (\Cap B)^{-1} \in (0,1).
\]
Then, using (a) to upper bound $\E[K^2]$, equation \eqref{eq:result_from_BGVP} gives
\begin{align*}
\E \left[ K \sum_{x \in B} \rho_N^{\eta,\phi}(x) \right] &\leq \frac{1}{2} \Cap(B) \E \left[ K \cdot \delta \sum_{x\in B} \rho_N^{\eta,\phi}(x) \right] + \frac{\Ent(K \mid \widetilde \phi)}{\beta} \\
& \leq \frac{1}{2} \E \left[ K \sum_{x\in B} \rho_N^{\eta,\phi}(x) \right] + \frac{\Ent(K \mid \widetilde \phi)}{\beta}, 
\end{align*}
This can be rewritten as
$$\E \left[ K \sum_{x \in B} \rho_N^{\eta,\phi}(x) \right] \leq  \frac{2 \Ent(K \mid \widetilde \phi)}{\beta}.$$
We then apply successively (b) and \eqref{eq:error_first_moment} to get a lower bound for the LHS.
\begin{align*}
\E \left[ K \sum_{x \in B} \rho_N^{\eta,\phi}(x) \right] &\geq \frac{1}{\delta} \E \left[ \delta^2 \left(\sum_{x \in B} \rho_N^{\eta,\phi}(x)\right)^2 \right] - \frac{1}{\delta} \E \left| \delta \sum_{x \in B} \rho_N^{\eta,\phi}(x)\right| \\
&\geq \delta \E\left(\sum_{x \in B} \rho_N^{\eta,\phi}(x)\right)^2  - \dfrac{\ln(2)}{\beta} - \dfrac{\Cap(B)}{2}
\end{align*}
Putting everything together and multiplying by $\Cap(B)=\delta^{-1}$, we obtain
\begin{equation}\label{eq:stepto_error_second_moment}
\E\left(\sum_{x \in B} \rho_N^{\eta,\phi}(x)\right)^2 \leq \frac12 \Cap(B)^2 + \frac{\ln 2 }{\beta} \Cap(B) + \frac{2}{\beta} \Cap(B)\Ent(K \mid \widetilde \phi).
\end{equation}
Once again, it only remains to control the entropy term.

Fix $\widetilde y$ and consider the conditional law of $K$ given $\widetilde\phi = \widetilde y$, with conditional mean $m(\widetilde y) = \E[|K| \mid \widetilde\phi = \widetilde y]$. First, since the sign of $K$ takes at most two values,
$$\Ent(K \mid \widetilde\phi = \widetilde y) \leq \Ent(|K| \mid \widetilde\phi = \widetilde y) + \ln 2.$$
To bound $\Ent(|K| \mid \widetilde\phi = \widetilde y)$, let $q$ be the (shifted) geometric law on $\mathbb{N}= \{0,1,2,\ldots\}$ with parameter $\theta = 1/(m(\widetilde y)+1)$, i.e.\ $q(n) = \theta (1-\theta)^n$, which has mean $m(\widetilde y)$ and entropy $\ln(m(\widetilde y)+1) + m(\widetilde y)\ln(1+1/m(\widetilde y))$. It is known that $q$ maximises entropy among all laws on $\mathbb{N}$ with mean $m(\widetilde y)$ (this can be seen as an application of the Boltzmann--Gibbs variational principle on $\Omega= \N$), so
$$\Ent(|K| \mid \widetilde\phi = \widetilde y) \leq \ln(m(\widetilde y)+1) + m(\widetilde y)\ln\left(1+\tfrac{1}{m(\widetilde y)}\right) \leq \ln(m(\widetilde y)+1) + 1.$$
Thus, we get $$\Ent(K \mid \widetilde\phi = \widetilde y) \leq \ln(m(\widetilde y)+1) + 1 + \ln2.$$
 Taking the expectation over $\widetilde\phi$ and applying Jensen's inequality to the concave function $t \mapsto \ln(t+1)$ yields
\begin{align*}
\Ent(\Gib) - \Ent(\nu) = \Ent(K \mid \widetilde\phi)
&\leq \E\bigl[\ln(m(\widetilde\phi)+1)\bigr] + 1 + \ln 2 \\
&\leq \ln\bigl(\E[m(\widetilde\phi)]+1\bigr) + 1 + \ln 2 \\
&= \ln\bigl(\E|K|+1\bigr) + 1 + \ln 2,
\end{align*}
where we used the law of total expectation $\E[m(\widetilde\phi)] = \E|K|$.

Finally, we successively apply (a) and \eqref{eq:error_first_moment} once more to bound $\E|K|$:
$$\E|K| \leq \delta \E \left| \sum_{x \in B} \rho_N^{\eta, \phi}(x) \right| \leq\frac12 + \frac{\delta \ln 2}{\beta}.$$
Plugging in \eqref{eq:stepto_error_second_moment} the bound on the entropy that we obtained concludes the proof:
$$\E\left(\sum_{x \in B} \rho_N^{\eta,\phi}(x)\right)^2 \leq \frac12 \Cap(B)^2+ \frac{\ln(2)}{\beta} \Cap(B) + \frac{2}{\beta} \Cap(B) \left(1+ \ln 2+\ln\left(\tfrac32+ \tfrac{\delta \ln 2}{\beta}\right) \right) $$
(one can take for example $C(\beta)= \frac{1+\ln 6}{\beta}+\frac{4 \ln(2)}{3 \beta^2}$, using the facts that $\ln(3/2+t) \leq \ln(3)-\ln(2)+2t/3$ and $\delta=(\Cap B)^{-1} \leq 1$).
\end{proof}

We may now prove Proposition \ref{prop:distanceheighttogroundstate}.
\begin{proof}[Proof of Proposition \ref{prop:distanceheighttogroundstate}]
    We have already noticed that $$\lambda G_{N, \alpha} \eta(0)-\phi(0) =\sum_{x \in \Lambda_N} G_{N, \alpha}(0,x) \rho_N^{\eta, \phi}(x).$$
    Thus, we have 
    \begin{multline*}
        \left\Vert\phi(0)-\lambda G_{N,\alpha} \eta(0) \right\Vert_{\mathbb{L}^2(\Gib)} \leq \left\Vert\sum_{x=1}^N G_{N, \alpha}(0,x) \rho_N^{\eta, \phi}(x)\right\Vert_{\mathbb{L}^2(\Gib)} \\ \qquad +\left\Vert\sum_{x=-N}^{-1} G_{N, \alpha}(0,x) \rho_N^{\eta, \phi}(x)\right\Vert_{\mathbb{L}^2(\Gib)}+\left\Vert G_{N, \alpha}(0,0) \rho_N^{\eta, \phi}(0)\right\Vert_{\mathbb{L}^2(\Gib)}
    \end{multline*}
    The first two terms are treated the same way, and the last one is, by \eqref{eq:error_second_moment} with $B=\{0\}$, bounded by a constant time $G_{N, \alpha}(0,0)$. Let us consider the sum over $\{1, \dots, N\}$.
    Summing by parts, we get
    \begin{align*}
        \lefteqn{\left(\sum_{x=1}^N G_{N,\alpha}(0,x)\rho_N^{\eta, \phi}(x)\right)^2} \qquad &\\ &= \left(\sum_{x=1}^N [G_{N, \alpha}(0,x)-G_{N, \alpha}(0,x+1)] \sum_{y=1}^x \rho_N^{\eta, \phi}(y)\right)^2 \\
        &\leq \sum_{x=1}^N |G_{N, \alpha}(0,x)-G_{N, \alpha}(0,x+1)|\sum_{x=1}^N |G_{N, \alpha}(0,x)-G_{N, \alpha}(0,x+1)| \left(\sum_{y=1}^x \rho_N^{\eta, \phi}(y)\right)^2,
    \end{align*}
by Cauchy--Schwarz inequality.
By monotonicity of the Green's function, the first sum is equal to $G_{N, \alpha}(0,1)$, which is less than $G_{N, \alpha}(0,0)$. According to \eqref{eq:error_second_moment}, the term $\left(\sum_{y=1}^x \rho_N^{\eta, \phi}(y)\right)^2$ is, in expectation (with respect to $\Gib$), bounded by a constant time $\Cap(\{1, \dots, x\})$, which is at most of order $\Cap(\Lambda_N)$.
By taking expectation in the previous equation, and bounding again the other telescopic sum by $G_{N,\alpha}(0,0)$, we obtain
\begin{equation*}
    \E \left[ \left(\sum_{x=1}^N G_{N,\alpha}(0,x)\rho_N^{\eta, \phi}(x)\right)^2 \right] \leq C G_{N, \alpha}(0,0)^2 \Cap(\Lambda_N)^2.
\end{equation*}
The contribution of $\{-N, \cdots, -1\}$ is of the same order, and the one of 0 is smaller, so
\begin{equation*}
    \left\Vert\phi(0)-\lambda G_{N,\alpha} \eta \right\Vert_{\mathbb{L}^2(\Gib)} \leq C G_{N, \alpha}(0,0) \Cap(\Lambda_N).
\end{equation*}

This is enough to conclude the proof in the case $\alpha \geq 2$. Indeed, for $\alpha>2$, the capacity is bounded by a constant (see \eqref{eq:bound_capacity}) and the upper bounds on $G_{N, \alpha}$ provided by Proposition \ref{prop:propgreensfunction} give the right order; for $\alpha=2$, both $\Cap(\Lambda_N)$ and $G_{N,2}(0,0)$ are of order $\ln N$, hence the result.\footnote{In fact, bounding the error term by $C G_{N,\alpha}(0,0)\Cap(\Lambda_N)$ would be enough to prove Theorem \ref{thm:deloc} for all $\alpha>5/3$, since this quantity is negligible with respect to $N^{2 \alpha -3}$ in this case.}

To get an upper bound of order $\ln N$ in the case $\alpha<2$, we shall divide the interval $\{1, \dots ,N\}$ into dyadic blocks.

For $k \geq 0$, set
$$B_k:= \{2^k, 2^k+1, \cdots , 2^{k+1}-1\} \cap \Lambda_N.$$
These blocks are empty for $k> \lfloor \log_2 N \rfloor$. Thus, we have
\begin{align*}
    \left\Vert\sum_{x=1}^N G_{N, \alpha}(0,x) \rho_N^{\eta, \phi}(x)\right\Vert_{\mathbb{L}^2(\Gib)}  &\leq \sum_{k=0}^{\lfloor \log_2 N\rfloor} \left\Vert\sum_{x\in B_k} G_{N, \alpha}(0,x) \rho_N^{\eta, \phi}(x)\right\Vert_{\mathbb{L}^2(\Gib)} \\
    &\leq \sum_{k=0}^{\lfloor \log_2 N\rfloor} \left\Vert \sum_{x\in B_k} [G_{N, \alpha}(0,x)-G_{N, \alpha}(0,x+1)] \sum_{y=2^k}^x \rho_N^{\eta, \phi}(y)\right\Vert_{\mathbb{L}^2(\Gib)} \\
    &\qquad \qquad + \sum_{k=0}^{\lfloor \log_2(N) \rfloor} G_{N,\alpha}(0,2^{k+1}) \left\Vert \sum_{y=2^k}^{2^{k+1}-1} \rho_N^{\eta, \phi}(y)\right\Vert_{\mathbb{L}^2(\Gib)}
\end{align*}
by performing on each block the same summation by parts as before.
Then, with the same computation as above using Cauchy--Schwarz inequality, we can bound the first term as follows:
    \begin{align*}
        \lefteqn{\E \left(\sum_{x\in B_k} [G_{N, \alpha}(0,x)-G_{N, \alpha}(0,x+1)] \sum_{y=2^k}^x \rho_N^{\eta, \phi}(y)\right)^2} \qquad &\\
        &\leq \E\left[ \sum_{x\in B_k} |G_{N, \alpha}(0,x)-G_{N, \alpha}(0,x+1)|\sum_{x\in B_k} |G_{N, \alpha}(0,x)-G_{N, \alpha}(0,x+1)| \left(\sum_{y=2^k}^x \rho_N^{\eta, \phi}(y)\right)^2\right] \\
        &\leq C G_{N, \alpha}(0,2^k)^2 \Cap(B_k)^2.
    \end{align*}
Combined with the previous equation, and also by applying \eqref{eq:error_second_moment} to the boundary term, this gives
\begin{align*}
    \left\Vert\sum_{x=1}^N G_{N, \alpha}(0,x) \rho_N^{\eta, \phi}(x)\right\Vert_{\mathbb{L}^2(\Gib)}  &\leq C \sum_{k=0}^{\lfloor \log_2 N\rfloor} \Big[ G_{N, \alpha}(0,2^k) \Cap(B_k)+G_{N,\alpha}(0,2^{k+1}) \Cap(B_k) )\Big] \\
\end{align*}
According to the upper bound on the Green's function (Proposition \ref{prop:propgreensfunction}) in the regime $\alpha\in(1,2)$ and the one on the capacity of an interval \eqref{eq:bound_capacity}, we have that all the terms in the above sum are bounded by a constant.
Since there are $O(\ln N)$ terms, this yields the result.
\end{proof}

\section{Central limit theorem} \label{sec:secCLT}
In this short section, we prove Theorem \ref{thm:thm1.4TCL}. Similarly as above, our strategy is to first show a central limit theorem for the real-valued ground state, and then to take advantage of our control on the $\mathbb{L}^2$-distance between the height function and this ground state.

\begin{proof}[Proof of Theorem \ref{thm:thm1.4TCL}]
One has the following convergence in distribution:
\begin{equation} \label{eq:CLTgroundstate}
    \frac{G_{N,\alpha}\eta(0)}{\sqrt{\sum_{x\in \Lambda_N} G_{N , \alpha}(0,x)^2}} \overset{\mathcal{L}}{\underset{N \to \infty}{\longrightarrow}} \mathcal N(0,1).
\end{equation}
This is a direct consequence of the Lindeberg--Feller theorem (see e.g. \cite[Theorem 3.4.10]{Durrett_2019} or \cite[Theorem 27.2]{Bilingsley_1995}) which generalises the central limit theorem to triangular arrays of random variables.
Indeed, for all $N \in \N$, set for all $k\in \{-N, \dots, N\}$, $X_{N,k}:=\sigma_{N,k} \eta_k$ where $\sigma_{N,k}:=\frac{G_{N,\alpha}(0,k)}{\sqrt{\sum_{x\in \Lambda_N} G_{N,\alpha}(0,x)^2}}$ ($\sigma^2_{N,k}$ is thus the variance of $X_{N,k}$). The random variables $(X_{N,k})_{N \in \N, -N \leq k \leq N}$ are therefore independent and we have for all $N$, $\sum_{k=-N}^N \sigma^2_{N,k}=1$. Then, the Lindeberg condition
$$\forall \varepsilon>0, \quad\lim_{N \to \infty} \sum_{k=-N}^N \bbE [X_{N,k}^2 \mathbf{1}_{\{|X_{N,k} |> \varepsilon\}}] \xrightarrow[N\to \infty]{}0$$
holds because the sum over $\{-N, \dots N\}$ is upper bounded by
$\bbE[\eta_0^2 \mathbf{1}_{\{|\eta_0|> \varepsilon/\max_k \sigma_{N,k}\}}]$,
which goes to $0$ as $N\to \infty$ since $\max_k \sigma_{N,k}$ goes to 0 (this follows, for any decay exponent $\alpha>3/2$, from the upper bounds on the Green's function stated in Proposition \ref{prop:propgreensfunction} and the estimates on its $\mathbb L^2$-norm proved in Proposition \ref{prop:convergencesquareGreens}).
Lindeberg--Feller theorem then states that the sum $\sum_{k=-N}^N X_{N,k}$ converges in distribution towards a standard normal random variable, which is exactly \eqref{eq:CLTgroundstate}.

We now combine~\eqref{eq:CLTgroundstate} with~\eqref{eq:convergenceto0} together with the following statement: if $(X_N)_{N \in \N}$ and $(Y_N)_{N \in \N}$ are two sequences of random variables, then
\begin{equation*}
    Y_N \underset{N \to \infty}{\overset{\mathcal{L}}{\longrightarrow}} \mathcal N(0,\lambda^2) ~~\mbox{and}~~X_N - Y_N \underset{N \to \infty}{\overset{\mathbb{L}^2}{\longrightarrow}} 0 ~~ \implies ~~X_N \underset{N \to \infty}{\overset{\mathcal{L}}{\longrightarrow}} \mathcal N(0,\lambda^2).
\end{equation*}
We obtain $$\frac{\phi_N(0)}{\sqrt{\sum_{x \in \Lambda_N}G_{N,\alpha}(0,x)^2}} \overset{\mathcal{L}}{\underset{N \to \infty}{\longrightarrow}} \mathcal N(0,\lambda^2).$$
Then, since 
$$\dfrac{\bbE\E[\phi(0)^2]}{\sum_{x \in \Lambda_N}G_{N,\alpha}(0,x)^2} \to \lambda^2 ~~\mbox{as}~~N\to \infty,$$ it yields the result.
\end{proof}

\appendix
\section{Properties of the killed Green's function}\label{appendix}
The purpose of this appendix is to prove the properties of the killed Green's function listed in Proposition \ref{prop:propgreensfunction}. All of this should be considered as being part of the folklore, and much more precise results are known,\footnote{For example, we have seen that the result of Proposition \ref{prop:convergenceoccupationmeasure} is already contained in \cite{spitzer1976principles} for $\alpha>3$ and in \cite{LSSW17} for $\alpha\in(1,3)$.} but we are unaware of a reference containing especially all the upper bounds for the different regimes of decay exponent.

\subsection{Basic properties}
We start by proving items (1) and (2).
The fact that $G_{N, \alpha}(0, x)=0$ for all $x \notin \Lambda_N$ is by definition ---the random walk being killed outside $\Lambda_N$--- and the fact that $G_{N, \alpha}(0, \cdot)$ is even comes by symmetry of the random walk. 

We may now prove that $G_{N, \alpha}(0, \cdot)$ is non-increasing on $\N$. 
Recall that for all $x\in \Lambda_N$,
$$G_{N, \alpha}(0,x) = \int_0^{+\infty} p_t^{\Lambda_N}(0,x) \mathrm{d}t \qquad \text{with }p_t^{\Lambda_N}(0,x)=\langle \delta_0,e^{t L_{N, \alpha}} \delta_x \rangle.$$
In fact, we will prove that for all $t\geq0$, $p_t^{\Lambda_N}(0, \cdot)$ is non-increasing on $\N$, which immediately implies our claim.
Recall that $c_\alpha=2\sum_{n \geq 1} n^{-\alpha}$ and let us write
$$p_t^{\Lambda_N}=e^{t L_{N, \alpha}}=e^{-(c_\alpha+1) t} e^{tQ} = e^{-(c_\alpha+1)t} \sum_{n \geq0} \frac{t^n}{n!}Q^n\qquad \text{ with } Q=(c_\alpha+1) I+L_{N, \alpha}.$$
Here, one should think of $Q$ as playing the role of a (non-normalised) transition matrix of a lazy version of the discrete-time random walk.
We have for all $x \neq y \in \Lambda_N$,
$$Q(x,x)= 1 \qquad \mbox{ and } \qquad Q(x,y)=|x-y|^{-\alpha}$$

For all $n\geq 0$, we let $u_n(x):=Q^n(0,x)$ for all $x \in \Lambda_N$ and $u_n(\cdot):=0$ outside $\Lambda_N$. We show by induction that $u_n$ is even and non-increasing on $\N$ for all $n \geq0$. It is trivial for $n=0$, and for $n=1$, we have $u_1(0)=1$ and $u_1(x)=|x|^{-\alpha}$ for all $x \in \Lambda_N \setminus \{0 \}$, so the property holds. Assume that it holds for some $n \geq 1$ and set $r(0) := 1$ and $r(x) := |x|^{-\alpha}$ for $x \in \bbZ \setminus \{0\}$; one has for all $x \in \Lambda_N$
$$u_{n+1}(x)= \sum_{y \in \Lambda_N} u_n(y) Q(y,x) = \sum_{y \in \mathbb Z} u_n(y) r(x-y)$$
since $u_n$ vanishes outside $\Lambda_N$.
By parity of $u_n$ and $r$, it is clear that $u_{n+1}$ is still even.
For all $k$, let $\Delta u_n(k)=u_n(k)-u_n(k+1)$.
Fix $x\in \{0, \dots, N-1\}$ and observe that
\begin{align*}
    u_{n+1}(x)-u_{n+1}(x+1)&= \sum_{y \in \mathbb Z} \Delta u_n(y) r(x-y)\\
    &=\sum_{y \geq0} \Delta u_n(y) \big[r(x-y)-r(x+y+1)\big]
\end{align*}
where the last equality relies on the fact that $-\Delta u_n(k)=\Delta u_n(-k-1)$ for all $k$, which holds by parity of $u_n$.
Since $u_n$ and $r$ are non-increasing on $\N$, one has $\Delta u_n(y) \geq0$ for all $y \geq 0$ and $r(x-y) \geq r(x+y+1)$, as $|x-y| \leq x+y+1$ (here we also use the parity of $r$).
This proves that $u_{n+1}(x) \geq u_{n+1}(x+1)$ for $x \in \{0, \dots, N-1\}$, and since $u_{n+1}(x)=0$ for all $x>N$, we deduce that $u_{n+1}$ is non-increasing on $\N$.\footnote{In fact, here we prove the following more general fact: for any two even functions (one of them with compact support) which are non-increasing on a half-line, the convolution of the two is also even and non-increasing on the same half-line; this holds both for continuous or discrete functions.}

Finally, since we have
$$p_t^{\Lambda_N}(0,x) = e^{-(c_\alpha+1) t} \sum_{n \geq 0} \frac{t^n}{n!}u_n(x),$$
we deduce that $p_t^{\Lambda_N}(0, \cdot)$ is non-increasing on $\N$ and that the same property holds for $G_{N, \alpha}(0, \cdot)$.

\subsection{Upper bounds}
In this section, we prove item (3) of Proposition \ref{prop:propgreensfunction}. 
We shall use the following upper bounds on $p_t^{\Lambda_N}(0,x)$, which in fact hold for the non-killed random walk:
\begin{subequations}
\begin{equation}\label{eq:bound_pt(0,x)_alpha<3}
     p_t^{\Lambda_N}(0,x) \leq C \begin{cases}
        (1+t)/|x|_+^\alpha & \text{ if }t< |x|_+^{\alpha-1} \\
        t^{-1/(\alpha-1)}& \text{ if }t\geq |x|_+^{\alpha-1}\end{cases} \qquad \text{ for } \alpha\in (1,3),
\end{equation}
\begin{equation}\label{eq:bound_pt(0,x)_alpha=3}
    p_t^{\Lambda_N}(0,x) \leq C \sqrt{1+t\ln|t|_+}^{-1} \qquad \text{ for } \alpha=3.
\end{equation}
\end{subequations}
The first point is a consequence of Theorem 1.1 in \cite{Bass_Levin} and the second one follows from Theorem 1.1 in \cite{Murugan_Saloff-Coste} (see also \cite[Proposition B.1]{Bou-Rabee_Dario}).

Another crucial tool of the proof will be the fact that the smallest eigenvalue of $-L_{N,\alpha}$, denoted $\lambda^{(1)}_{N, \alpha}$, satisfies
\begin{equation}\label{eq:smallest_eig_val}
    \lambda^{(1)}_{N, \alpha} \geq c \begin{cases}
        N^{1-\alpha} & \text{ for } \alpha \in (1,3) \\
        \ln(N)/N^2 & \text{ for } \alpha=3
    \end{cases}.
\end{equation}
The case $\alpha=3$ is covered by \cite[Proposition 4.1]{Bou-Rabee_Dario}, and it is easily observed for $\alpha \in (1,3)$:
for all $f\in \Omega_N^\R$, one has
\begin{align*}
    \langle f, (-L_{N, \alpha}) f \rangle &=  \sum_{x \neq y} \dfrac{|f(x)-f(y)|^2}{|x-y|^\alpha} \\
    &\geq \sum_{x \in \Lambda_N} \sum_{y=N+1}^{2N}\dfrac{|f(x)-f(y)|^2}{|x-y|^\alpha} \\
    &\geq \frac{1}{(3N)^\alpha}\sum_{x \in \Lambda_N} N f(x)^2 =c N^{1-\alpha} \langle f,f \rangle,
\end{align*}
where, in the last inequality, we used that $f(y)=0$ for $y>N$ and $|x-y| \leq 3N$.

Recall that our goal is to prove
    \begin{equation*}
        G_{N, \alpha}(0,x) \leq C \begin{cases}
            |x|_+^{\alpha-2} & \mbox{ for }\alpha \in (1,2) \\
            1+\ln(N/|x|_+) & \mbox{ for }\alpha =2 \\
            N^{\alpha-2} & \mbox{ for }\alpha \in (2,3) \\
            N/\ln N & \mbox{ for }\alpha =3 \\
            N & \mbox{ for }\alpha >3 \\
        \end{cases}.
    \end{equation*}

\begin{proof}[Proof of Proposition \ref{prop:propgreensfunction}, item (3)]
    We split the proof into 5 cases, depending on the values of $\alpha$, starting by the easier ones. Observe that for $\alpha >2$ (cases 1, 3 and 4 below), we only need to bound $G_{N, \alpha}(0,0)$, whereas for $\alpha \in (1,2]$ (cases 2 and 5), we aim to control the decay of $G_{N, \alpha}(0,x)$ as $|x|$ grows.
    
    \smallskip
    \underline{Case 1 : $\alpha>3$.} This is the content of Proposition 5 in \cite[Section 22]{spitzer1976principles} (in fact, we even noticed above that by Theorem 1 of \cite[Section 23]{spitzer1976principles}, one has $G_{N,\alpha}(0,0)/N \to 1/(2 \sigma_\alpha^2)$ as $N\to \infty$, which is stronger).
    
    \smallskip
    \underline{Case 2 : $\alpha\in(1,2)$.} This case is easy once \eqref{eq:bound_pt(0,x)_alpha<3} is established, essentially because it is the range where the (non-killed) random walk is already transient. For all $x \in \Lambda_N$, it gives
    \begin{align*}
        G_{N, \alpha}(0,x) \leq C \int_0^{|x|_+^{\alpha-1}} \dfrac{1+t}{|x|_+^\alpha} \mathrm{d}t+C \int_{|x|_+^{\alpha-1}}^{+\infty} t^{-1/(\alpha-1)} \mathrm{d}t \leq C |x|_+^{\alpha-2}.
    \end{align*}
    
    \underline{Case 3 : $\alpha\in(2,3)$.} We first show that for all $t>0$, 
    \begin{equation}\label{eq:p2t_VS_pt}
        p_{2t}^{\Lambda_N}(0,0) \leq e^{-\lambda^{(1)}_{N, \alpha}t} p_t^{\Lambda_N}(0,0).
    \end{equation}
    Let $M=e^{L_{N, \alpha}}$ and abbreviate $\lambda:=\lambda^{(1)}_{N,\alpha}$; one has $\langle f,M^tf\rangle \leq e^{-\lambda t} \langle f,f\rangle$ for all $f \in \Omega^\R_N$, and we claim that this implies $\langle f,M^{2t}f\rangle \leq e^{-\lambda t} \langle f,M^tf\rangle$ (this is easy to check by decomposing $f$ on a basis of eigenfunctions of $M^t$). This gives \eqref{eq:p2t_VS_pt} when applied to $f=\delta_0$.
    Then, we have
    \begin{align*}
        G_{N, \alpha}(0,0) &=2 \int_0^{+\infty} p_{2t}^{\Lambda_N}(0,0) \mathrm{d}t \\
        &\leq 2\int_0^{+\infty} e^{-\lambda t} p_t^{\Lambda_N}(0,0) \mathrm{d}t \\
        &\leq C\int_0^{+\infty} \dfrac{e^{-\lambda t}}{t^b} \mathrm dt \qquad \text{ with }b=1/(\alpha-1),
    \end{align*}    
    where the last inequality relies on \eqref{eq:bound_pt(0,x)_alpha<3}. The integral on the RHS is finite since $b<1$ as $\alpha>2$.
    Thanks to the change of variable $u=\lambda t$, we get
    $\displaystyle\int_0^{+\infty} \dfrac{e^{-\lambda t}}{t^b} \mathrm dt=\lambda^{b-1}\int_0^{+\infty} \dfrac{e^{-u}}{u^b} \mathrm dt = C \lambda^{b-1}$. Combining with the previous equation and with \eqref{eq:smallest_eig_val}, we obtain
    $$G_{N, \alpha}(0,0) \leq C \lambda^{b-1} \leq CN^{\alpha-2}.$$

    \underline{Case 4 : $\alpha=3$.} We adapt the proof of case 3 but using \eqref{eq:bound_pt(0,x)_alpha=3} to upper bound $p_t^{\Lambda_N}(0,0)$. As this would lead to an upper bound of order $N/\sqrt{\ln N}$ instead of $N/\ln N$, we also split the integral into two parts:
    \begin{align*}
        G_{N,3}(0,0) &\leq C \int_0^{+\infty} \dfrac{e^{-\lambda t}}{\sqrt{1+t \ln |t|_+}} \mathrm dt \\
        &\leq C\int_0^N \dfrac{e^{-\lambda t}}{\sqrt{t \ln 2}} \mathrm dt+C\int_N^{+\infty} \dfrac{e^{-\lambda t}}{\sqrt{t \ln N}} \mathrm dt \\
        &\leq C \int_0^N \frac{1}{\sqrt t} \mathrm dt + \dfrac{C}{\sqrt{\ln N}} \lambda^{-1/2} \int_{\lambda N}^{+\infty} \frac{e^{-u}}{\sqrt u} \mathrm du \\
        &\leq C \sqrt{N}+C \dfrac{N}{\ln N} \leq C \dfrac{N}{\ln N},
    \end{align*}
    by \eqref{eq:smallest_eig_val}, which implies $\lambda^{-1/2} \leq C N/\sqrt{\ln N}.$

    \smallskip
    \underline{Case 5 : $\alpha=2$.}
    For $t\in [0,N]$, we directly apply \eqref{eq:bound_pt(0,x)_alpha<3} to bound $p_{t}^{\Lambda_N}(0,x)$. However, for the contribution of $(N, +\infty)$ to the integral, we will use the inequality $$p_{2t}^{\Lambda_N}(0,x) \leq p_{2t}^{\Lambda_N}(0,0) \leq C \dfrac{e^{-\lambda t}}{t},$$ which holds by monotonicity of $p_{2t}^{\Lambda_N}(0, \cdot)$, \eqref{eq:p2t_VS_pt} and \eqref{eq:bound_pt(0,x)_alpha<3}.
    It gives
    \begin{align*}
        G_{N,2}(0,x) &= \int_0^N p_t^{\Lambda_N}(0,x) \mathrm dt +2\int_{N/2}^{+\infty}p_{2t}^{\Lambda_N}(0,x) \mathrm dt \\
        &\leq \int_0^{|x|_+} \dfrac{1+t}{|x|_+^2} \mathrm dt +\int_{|x|_+}^N \frac1t \mathrm dt+ C \int_{N/2}^{+\infty} \dfrac{e^{-\lambda t}}{t} \mathrm dt \\
        &\leq C + 2\ln(N/|x|_+) + C \int_{\lambda N/2}^{+\infty} \frac{e^{-u}}{u} \mathrm{d}u \\
        &\leq C\left(1+\ln(N/|x|_+) \right),
    \end{align*}
    where in the last inequality we use the fact, coming from \eqref{eq:smallest_eig_val}, that $\lambda N \geq c$ (so that the last integral is finite).
\end{proof}

\printbibliography
\end{document}